\theoremstyle{plain}
\newtheorem{thm}{Theorem}[section]
\newtheorem{lem}[thm]{Lemma}
\theoremstyle{definition}
\newtheorem{rem}[thm]{Remark}
\newtheorem{defi}[thm]{Definition}
\numberwithin{thm}{section}
\numberwithin{equation}{section}
\def\supp{\operatorname{supp}}
\def\esup{\operatornamewithlimits{ess\,sup}}
\def\Id{\operatorname{I}}
\def\RHS{\operatorname{RHS}}
\def\LHS{\operatorname{LHS}}
\def\Ces{\operatorname{Ces}}
\def\Cop{\operatorname{Cop}}
\def\M{\mathcal{M}}
\def\Mpl{\mathcal{M}^+}
\begin{document}

\author{Amiran Gogatishvili, Lubo\v s Pick\and Tu\u{g}\c{c}e \"{U}nver}

\title{Weighted inequalities involving Hardy and Copson operators}


\address{A. Gogatishvili,
Institute of Mathematical of the  Czech Academy of Sciences, \v Zitn\'a~25, 115~67 Praha~1, Czech Republic}
\email{gogatish@math.cas.cz}

\address{L. Pick,
Department of Mathematical Analysis, Faculty of Mathematics and Physics, Sokolovsk\'a 83, 186 75 Praha~8, Czech Republic}
\email{pick@karlin.mff.cuni.cz}

\address{T. \"{U}nver,
Institute of Mathematics of the
Czech Academy of Sciences,
\v Zitn\'a~25,
115~67 Praha~1,
Czech Republic \newline
Faculty of Science and Arts,
Kirikkale University,
71450 Yahsihan, Kirikkale,
Turkey}
\email{unver@math.cas.cz, tugceunver@kku.edu.tr}
		
\subjclass[2000]{26D10, 46E20}
\keywords{}
\thanks{
The work was partially supported by grant no. P201-18-00580S of the Czech Science Foundation, and RVO: 67985840. The research of the A. Gogatishvili was also partially supported by Shota Rustaveli National Science Foundation (SRNSF), grant no: FR17-589. The research of L. Pick was also partially supported by the Danube Region Grant no. 8X2043 and by the grant P201/21-01976S of the Czech Science Foundation.  The research of T.~\"{U}nver was supported by the grant of The Scientific and Technological Research Council of Turkey (TUBITAK), Grant No: 1059B192000075. Part of the work on this project was carried out during the meeting \textit{Per Partes} held at Horn\'{\i} Lyse\v ciny, June 2-6, 2021.}
	
\begin{abstract}
We characterize a four-weight inequality involving the Hardy operator and the Copson operator. More precisely, given $p_1, p_2, q_1, q_2 \in (0, \infty)$, we find necessary and sufficient conditions on nonnegative measurable functions $u_1, u_2, v_1, v_2$ on $(0,\infty)$ for which there exists a positive constant $c$ such that the inequality
\begin{align*}
&\bigg(\int_0^{\infty} \bigg(\int_0^t f(s)^{p_2} v_2(s)^{p_2} ds \bigg)^{\frac{q_2}{p_2}} u_2(t)^{q_2} dt \bigg)^{\frac{1}{q_2}} \notag \\
& \hspace{3cm} \leq c  \bigg(\int_0^{\infty} \bigg(\int_t^{\infty}  f(s)^{p_1} v_1(s)^{p_1} ds \bigg)^{\frac{q_1}{p_1}} u_1(t)^{q_1} dt \bigg)^{\frac{1}{q_1}}
\end{align*}
holds for every non-negative measurable function $f$ on $(0, \infty)$. The proof is based on discretizing and antidiscretizing techniques. The principal
innovation consists in development of a new method which carefully avoids duality techniques and therefore enables us to obtain the characterization in
previously unavailable situations, solving thereby a long-standing open problem. We then apply the characterization of the inequality to the establishing of
criteria for embeddings between weighted Copson spaces $\Cop_{p_1,q_1} (u_1, v_1)$ and weighted Ces\`{a}ro spaces $\Ces_{p_2, q_2} (u_2, v_2)$, and also
between spaces $S^q(w)$ equipped with the norm
$\|f\|_{S^q(w)}= \bigg(\int_0^\infty [f^{**}(t)-f^*(t)]^q w(t)\,dt\bigg)^{{1}/{q}}$ and classical Lorentz spaces of type $\Lambda$.
\end{abstract}
	
\maketitle

\section{Introduction}

In 1981, the research of limiting properties of operators that are not bounded on $L^{\infty}$ led to the introduction in~\cite{Be-De-Sh:81} of the so-called Weak-$L^{\infty}$ space. It was determined by the norm-like functional $\sup_{t>0} (f^{**}(t)-f^{*}(t))$, in which $f^*$ is the \textit{non-increasing rearrangement} of a measurable function $f$ , defined by
\begin{equation*}
    f^*(t)=\inf\{ s>0; f_*(s) \leq t\} \quad\text{for $t\in[0,\infty)$,}
\end{equation*}
where $f_*(s)$ is the measure of the level set set $\{|f(x)| > s \}$, and $f^{**}(t)$ is the integral mean of $f^*$ over $(0,t)$. The operator $f\mapsto
f^{**}-f^{*}$ in certain (rearranged) way measures the mean oscillation of $f$, and so, quite naturally, there is an intimate connection of Weak-$L^{\infty}$ and $\operatorname{BMO}$. The above said norm-like functional cannot really be a norm, because of the intrinsic properties of the operator $f\mapsto f^{**}-f^{*}$, in particular its lack of (sub)linearity and the fact that it vanishes on constant functions. However, even so, the spaces defined in terms of this operator have proved to be very
useful. Their internal peculiarities have been dealt with using various approaches, for instance by finding equivalent expressions in terms of all kinds of Lorentz-like spaces \cite{Bo-Ma:05, Kr-Ma-Pe:00}, or working with the pointwise behaviour of $f^{**}(t)-f^{*}(t)$ \cite{Ba-Ku@86, Fr:97}. In~\cite{Ba-Mi-Ru:03}, certain combination of the methods was used in order to improve limiting Sobolev embeddings, later to be extended in~\cite{Mi-Pu:04} and~\cite{Pu:05}, and somewhat parallel to the constructions in~\cite{Ma-Pi:02}.

The impact of the functional $f^{**}-f^{*}$ to Sobolev-type inequalities and embeddings, and further to the regularity theory of PDEs, is expectable, owing to the classical estimate
\begin{equation*}
    t^{-1/n}(u^{**}(t)-{u}^{*}(t)) \lesssim (\nabla u)^{**}(t),
\end{equation*}
which holds for every smooth enough scalar function $u$ of $n$ real variables. If one wraps the term on each side of this inequality in his/her favourite norm, an embedding of some sort of a homogeneous Sobolev space into an appropriate type of the $S$-like space appears. It is therefore not surprising that, during the last four decades, function spaces involving this functional are being studied and keep on finding new applications. These include, for instance, the transfer of regularity from various
differential operators to solutions of the PDE's \cite{Mi-Pu:04,Ba-Mi-Ru:03,Ke-Pi:06}, optimal partner structures in Sobolev embeddings and for integral 
operators \cite{Ma-Pi:02,Ke-Pi:09,Ed-Mi-Mu-Pi:20,Tu:21}, various achievements in theory of function spaces and theory of interpolation such as nailing down the K\"othe duals of the classical Lorentz spaces of type Gamma \cite{Si:02}, or the hunt for a suitable environment for Calder\'on--Zygmund singular
integral operators \cite{Bo-Ma:05}. A lot more on topics related to this subject can be found in~\cite{Be-Sh:88}.

Consequently, naturally, relations to important known structures became of interest, in particular all kinds of embedding bindings to other function spaces. In~\cite{Ca-Go-Ma-Pi:08}, it was noticed that a task equivalent to characterizing an embedding between the space $S^q(u)=\{f\in \mathcal{M}:
\|f\|_{S^q(u)}<\infty\}$ and the classical Lorentz space $\Lambda^p(w)=\{f\in \mathcal{M}: \|f\|_{\Lambda^p(w)}<\infty\}$ for suitable $p,q,u,w$, in which
\begin{equation*}
\|f\|_{S^q(u)}= \bigg(\int_0^\infty \bigg[f^{**}(t)-f^*(t)\bigg]^q u(t)\,dt\bigg)^{\frac{1}{q}}
\end{equation*}
and
\begin{equation*}
\|f\|_{\Lambda^p(w)}=\bigg( \int_0^\infty f^*(t)^p w(t)\,dt\bigg)^{\frac{1}{p}},
\end{equation*}
where $f^{**}(t) = t^{-1} \int_0 ^t f^*(s) ds$ for $t\in(0,\infty)$, is to investigate the question of when a certain special case of the inequality
\begin{equation}\label{GMU-inequality}
    \begin{split}
        &\bigg(\int_0^{\infty} \bigg(\int_0^t f(s)^{p_2} v_2(s)^{p_2} ds \bigg)^{\frac{q_2}{p_2}} u_2(t)^{q_2} dt \bigg)^{\frac{1}{q_2}} 
            \\
        & \hspace{3cm} \leq c  \bigg(\int_0^{\infty} \bigg(\int_t^{\infty}  f(s)^{p_1} v_1(s)^{p_1} ds \bigg)^{\frac{q_1}{p_1}} u_1(t)^{q_1} dt \bigg)^{\frac{1}{q_1}}
    \end{split}
\end{equation}
holds for every measurable nonnegative function $f$. The main achievement of this equivalence is the reduction of a problem involving non-increasing functions to one involving all nonnegative measurable ones. When it comes to details, this is a big step forward. So, the new task at hand is to characterize for which parameters and weight functions the inequality~\eqref{GMU-inequality} holds. Particular results were obtained already in~\cite{Ca-Go-Ma-Pi:08}, but restricted to special cases of the parameter $q$, leaving the question of a complete solution open. Let us recall that the space $\Lambda^p(w)$ was introduced in~\cite{Lo:51}, while the space $S^q(u)$ in~\cite{Ca-Go-Ma-Pi:05}. Again, these spaces have been intensively studied and it would be impossible to quote all relevant references. Yet more connections, applications and references can be found in~\cite{Ca-Go-Ma-Pi:05}.

In this paper we will tackle (and solve) the above-mentioned open problem. Let us recall that it is clear from the beginning that the approach of~\cite{Ca-Go-Ma-Pi:08} cannot be used. This is caused by the fact that the duality techniques, which constitute the core of the method used there, work only for $q\ge1$. So, the challenge is not only to find necessary and sufficient conditions under which the inequality~\eqref{GMU-inequality} holds in general, but, and even more pressing, to find a suitable new approach which would avoid the duality techniques. This is also done in this paper.

Before we move to the more detailed explanation of the new method, let us first mention one more interesting application of inequality~\eqref{GMU-inequality}, completely different in nature. Among many interesting function spaces that play significant role in functional analysis and its applications and that are normed with the help of some operator plugged inside an $L^p$-norm, one of the oldest and at the same time best known ones are those of Ces\`aro, and their natural companions, the spaces of Copson. Both have been extensively studied both in the discrete and the continuous variants. Let us be more precise. Our focus is on the continuous case.

Let $\mathcal{M}$ be the set of all measurable functions on $(0, \infty)$ and $\mathcal{M}^+$ be the collection of all nonnegative functions in $\mathcal{M}$. Let us denote by $\Ces_{p,q}(u,v)$ and $\Cop_{p,q}(u,v)$, the \textit{weighted Ces\`{a}ro and Copson function spaces}, defined, respectively, as the collection of all functions $f\in \mathcal{M}$ such that
\begin{equation*}
\|f\|_{\Ces_{p,q}(u,v)}= \bigg(\int_0^{\infty} \bigg(\int_0^t |f(s)|^p v(s)^p ds\bigg)^{\frac{q}{p}} u(t)^q dt \bigg)^{\frac{1}{q}}< \infty,
\end{equation*}
and
\begin{equation*}
\|f\|_{\Cop_{p,q}(u,v)}= \bigg(\int_0^{\infty} \bigg(\int_t^{\infty} |f(s)|^p v(s)^p ds\bigg)^{\frac{q}{p}} u(t)^q dt \bigg)^{\frac{1}{q}} < \infty,
\end{equation*}
for $0 < p,q \leq \infty$, with the usual modification when $p=\infty$ or $q=\infty$, where $u, v$ are \textit{weights}, that is, measurable, positive and finite a.e.~on $(0, \infty)$ functions. The name of the spaces is derived from the \textit{Ces\`aro operator} $f\mapsto \frac{1}{t}\int_{0}^{t}f(s)\,ds$ and the
\textit{Copson operator} $f\mapsto \int_{t}^{1}\frac{f(s)}{s}\,ds$, which define their important particular instances. These operators are  dual to each other
with respect to the $L^1$-pairing. The resulting spaces, along with their discrete companions, have been studied for many decades, for earliest occurrences see
e.g.~\cite{Ko-Kr-Le:48} or~\cite{Sh:70}. In order to keep this paper at reasonable length, we omit detailed information on the vast literature which is nowadays available on these spaces and restrict ourselves to referring an interested reader to the survey paper \cite{As-Ma:14} and to the references given therein.

In \cite{Be:96}, G. Bennett proved that the spaces $\Ces_{1,p}(t^{-1}, 1)$ and $\Cop_{1,p}(1,t^{-1})$ coincide when $p\in(1,\infty)$, and determined the space of multipliers between Ces\`{a}ro sequence spaces. Later, in \cite{Gr:98}, K.G. Grosse-Erdmann considered single-weighted Ces\`{a}ro and Copson sequence spaces and characterized the multipliers between them and $\ell^p$. At the same time he stated, however, that the multipliers between Ces\`{a}ro and Copson sequence spaces are difficult to treat. He also investigated integral analogues of these results. However, the description of pointwise multipliers between Ces\`{a}ro and Copson spaces remained open for both sequence and function spaces setting.

If $X$ and $Y$ are two (quasi-) Banach spaces of measurable functions on $(0, \infty)$, we say that $X$ is \textit{embedded} into $Y$, written $X \hookrightarrow Y$, if there exists a constant $c \in (0, \infty)$ such that $\|f\|_Y \leq c \|f\|_X$ for all $f \in X$. The \textit{optimal} (smallest possible) such $c$ is then $\|\Id\|_{X\rightarrow Y}$, where $\Id$ is the identity operator. A function $f$ on $(0,\infty)$ is called a \textit{pointwise multiplier} from $X$ to $Y$ if the pointwise product $fg$ belongs to $Y$ for each $g\in X$. The space of all such multipliers $M(X,Y)$ becomes a quasi-normed space, when endowed with the functional
\begin{equation*}
\|f\|_{M(X,Y)} = \sup_{g \nsim 0} \frac{\|fg\|_Y}{\|g\|_X}.
\end{equation*}
If $f$ is a weight, $\|\cdot\|_{Y}\colon\M\to[0,\infty]$ is a functional and $Y\subset\M$ given by
\begin{equation*}
    Y=\{f\in\M:\|f\|_{Y}<\infty\},
\end{equation*}
then we can define the \textit{weighted space} $Y_f = \{g\in\M\colon fg \in Y\}$ and $\|g\|_{Y_f} = \|fg\|_Y$. One clearly has
\begin{equation*}
\|f\|_{M(X,Y)} = \sup_{g \nsim 0} \frac{\|g\|_{Y_f}}{\|g\|_X}=\|\Id\|_{X\rightarrow Y_f}.
\end{equation*}
Therefore, the problem of finding pointwise multipliers between weighted Ces\`{a}ro and Copson function spaces reduces to that of characterizing embeddings between weighted Ces\`{a}ro and Copson function spaces. To characterize such embedding and to give a proper quantitative assessment of it means in fact to find the optimal constant $c$ in~\eqref{GMU-inequality}, providing it with another rather interesting array of applications.

Now let us move to the inequality~\eqref{GMU-inequality} itself. Apart from what has been said already, there exists significant motivation for considering this inequality. If $p_1=q_1$ or $p_2=q_2$, then it coincides with the notorious weighted Hardy inequality or the reverse Hardy inequality, respectively, whose significance in mathematics is beyond discussion. Plenty has been written on such inequalities, but still there is lot to be found about them. For the contemporary trends concerning Hardy-type inequalities one can check, for example, the relatively recent treatises \cite{Ku-Pe-Sa:17}, \cite{Gr:98} or \cite{Ev-Go-Op:08}. One of the earliest treatments of \eqref{GMU-inequality} goes back to \cite{Bo:70},
in which a characterization was given under conditions $p_1=p_2=1$, $q_1=q_2=p>1$, $v_1(t) = t^{-\beta-1}$, $v_2(t) = t^{\alpha-1}$, $u_1(t)^p = t^{\beta p-1}$ and $u_2(t)^p = t^{-\alpha p-1}$ for $t>0$ and for some $\alpha, \beta >0$. An important progress was achieved in \cite{Ca-Go-Ma-Pi:08}. In Theorem~2.3 of that paper, inequality \eqref{GMU-inequality} was characterized in the special case when $p_1 = p_2 = 1 $, without considering the pair of inner weights (that is, formally, for $v_1 = v_2 = 1$), and under the additional restriction $q \geq 1$. As already mentioned, the method was based on duality techniques that eventually reduced the problem to characterizing a reverse inequality for the Copson operator, which in turn was solvable by methods that had been developed earlier in~\cite{Si:03} and~\cite{Gr:98}, using also, for the particular case $p\le 1$, the rather useful inequalities (see~\cite{Ca-So:93}, \cite{Si-St:96} or~\cite{St:93})
\begin{equation*}
    \left(\int_{0}^{t}f^*(s)^{\frac{1}{p}}v(s)\,ds\right)^{p} \le p\int_{0}^tf^*(s)V(s)^{p-1}v(s)\,ds
\end{equation*}
and
\begin{equation*}
    \int_{0}^{\infty}f^*(t)V(t)^{\frac{1}{p}-1}v(t)\,dt \le C \left(\int_{0}^{\infty}f^*(t)^{p}v(s)\,dt\right)^{\frac{1}{p}}.
\end{equation*}

Using an approach similar to that from \cite{Ca-Go-Ma-Pi:08}, a characterization of the inequality \eqref{GMU-inequality} was obtained in \cite{Go-Mu-Un:17} for finite exponents, and in \cite{Un:21} for $p_1 = \infty$. Embeddings between weighted Ces\`{a}ro spaces are given in \cite{Un:20}. However, once again, the results are restricted to the case  $p_2 \leq q_2$,  for which the duality argument works. We should also note that duality method reduces the inequality \eqref{GMU-inequality} to an inequality involving iterated Hardy-type inequalities, which are also difficult to treat, although recently some useful techniques in that direction are being developed. In correspondence with the appearing of alternative methods, getting rid of various restrictions on parameters is recently becoming possible, see e.g.~\cite{Kr-Mi-Tu:21}. Using the characterizations of  the inequality \eqref{GMU-inequality}, pointwise multipliers between $\Cop_{p_1,q_1}(u_1,v_1)$ and $\Ces_{p_2,q_2}(u_2,v_2)$ are given in \cite{Go-Mu-Un:19}, but again only when $p_2 \leq q_2$.

The crucial idea in our new approach is to seek a more manageable version of~\eqref{GMU-inequality}. It is important to notice that the pool of competing functions in~\eqref{GMU-inequality} does not change when we replace $f$ by $(fv_1)^{p_1}$. Raising~\eqref{GMU-inequality} to $p_1$, introducing new parameters $p,q,r$ by
\begin{equation}\label{parameters}
    r=\frac{p_2}{p_1}, \,\, q= \frac{q_2}{p_1}, \,\, p= \frac{q_1}{p_1}
\end{equation}
and defining new weights $u,v,w$ by
\begin{equation}\label{weights}
    u=u_2^{q_2}, \,\, v= v_1^{-p_2} v_2^{p_2}, \,\, w= u_1^{q_1},
\end{equation}
we obtain a seemingly simpler, but in fact equivalent, form of~\eqref{GMU-inequality}, namely
\begin{equation}\label{main}
    \bigg(\int_0^{\infty} \bigg(\int_0^t f(s)^r v(s) ds \bigg)^{\frac{q}{r}} u(t) dt \bigg)^{\frac{1}{q}} \leq C \bigg(\int_0^{\infty} \bigg(\int_t^{\infty} f(s) ds \bigg)^{p} w(t) dt\bigg)^{\frac{1}{p}}.
\end{equation}
If $c$ and $C$ are the best constants respectively in~\eqref{GMU-inequality} and~\eqref{main}, then $c^{p_1} = C$.

So now it is the inequality~\eqref{main} that we have to worry about. We shall characterize it without any restrictions on parameters and weights. To achieve this aim, we will develop a new array of discretization and anti-discretization techniques which avoid duality. As follows from the analysis given above, the result will have some important consequences. First, one will be able to significantly extend the results of \cite{Ca-Go-Ma-Pi:08} by giving full characterization of all possible embeddings between spaces $S^q(w)$ and $\Lambda^p(v)$. Furthermore, we can remove restrictions from the characterization of pointwise multipliers between $\Cop_{p_1,q_1}(u_1,v_1)$ and $\Ces_{p_2,q_2}(u_2,v_2)$.

Apart from the motivation described above, there seems to be a rapidly growing interest in finding optimal bounds for inequalities of the form~\eqref{main}, both for nonnegative functions and for non-increasing functions, motivated by some very interesting connections to Poincar\'e or Sobolev inequalities, rearrangement estimates of BMO functions, the conjecture of Iwaniec concerning the norm of the Beurling operator, and more. The interested reader is kindly referred to checking the papers~\cite{Bo-So:11,Bo-So:19,Bo-So:20,Iw:82,Ko:14,Ko:19,Ko:20,Kr-Se:08,St:17,St:20}, and also the references given therein.

Let us briefly describe the structure of the paper. We state the main results in Section~\ref{S:main-results}. In Section~\ref{S:Emb-S-L},
we apply them to embeddings of the type $\Lambda^p(w) \hookrightarrow S^q(u)$ for $p,q \in (0, \infty)$. In Section~\ref{S:NotPre} we collect all auxiliary results and the main elements of the discretization technique. In Section~\ref{S:DisChar}, a discrete characterization of inequality \eqref{main} is obtained. Proofs of the main results are contained in Section~\ref{S:Proofs}.

The proofs of the main results are unavoidably quite heavy and very technical. This is natural - being deprived of the comfort of duality we have no other choice than to immerse into deep and extremely fine analysis of the discretized material, and finally to carefully antidiscretize it. Ever since the discretization techniques were first used in connection with classical Lorentz spaces \cite{Go-Pi:03}, much effort has been spent by many authors to obtain equally strong results by some less technical methods, but with almost no success.

We will write $A \lesssim B$ if there exists a constant $C\in (0, \infty)$ independent of appropriate quantities such that $A \leq C B$. We write $A \approx B$ if we have both $A \lesssim B $ and $B \lesssim A$.  We put $1/ (\pm \infty) =0$, $0/0 =0$, $0\cdot (\pm \infty)=0$. Moreover, we omit arguments of integrands as well as differentials in integrals when appropriate in order to keep the expository as short as possible.

\section{Main results}\label{S:main-results}

Note that inequality \eqref{main} holds only for trivial functions if $r>1$. Hence the assumption that $0 < r \leq 1$, which applies throughout the paper, constitutes no restriction (cf.~\cite[Lemma~4.1]{Go-Mu-Un:17}). For $a,b\in[0,\infty]$, $a<b$, we define
\begin{align}\label{Vr}
    V_r(a, b) =
    \begin{cases}
        \big(\int_a^b v^{\frac{1}{1-r}}\big)^{\frac{1-r}{r}} & \text{if $0<r<1$,}
            \\
        \esup\limits_{s \in (a, b)} v(s) & \text{if $r=1$.}
    \end{cases}
\end{align}

\begin{thm}\label{T:main}
Let $0 < r \leq 1$, $0 < p, q < \infty$ and let $u,v,w$ be weights on $(0,\infty)$.

\rm{(i)} If  $p \leq r \leq 1 \leq q$, then \eqref{main} holds for all $f\in\Mpl$ if and only if $C_1< \infty$, where
\begin{equation*}
    C_1 =  \esup_{x\in (0,\infty)} \bigg(\int_0^x w\bigg)^{-\frac{1}{p}} \bigg( \int_x^{\infty} u \bigg)^{\frac{1}{q}}  V_r(0,x).
\end{equation*}
Moreover, the best constant $C$ in \eqref{main} satisfies $C \approx C_1$.

\rm{(ii)} If  $p\leq q <1$, and $p \leq r\leq 1$, then \eqref{main} holds for all $f\in\Mpl$ if and only if $\max\{C_1, C_2\} < \infty$, where
\begin{equation*}
    C_2 =  \sup_{x\in (0,\infty)} \bigg(\int_0^x w\bigg)^{-\frac{1}{p}} \bigg( \int_0^x \bigg( \int_t^{\infty} u \bigg)^{\frac{q}{1-q}} u(t) V_r(0,t)^{\frac{q}{1-q}} dt \bigg)^{\frac{1-q}{q}}.
\end{equation*}
Moreover, the best constant $C$ in \eqref{main} satisfies $C \approx C_1 + C_2$.

\rm{(iii)} If  $r < p \leq q$, and $r \leq 1 \leq q$, then \eqref{main} holds for all $f\in\Mpl$ if and only if $\max\{C_1, C_3\} < \infty$, where
\begin{equation*}
    C_3 =  \sup_{x \in (0, \infty)} \bigg(\int_{x}^{\infty} u\bigg)^{\frac{1}{q}} \bigg(\int_0^{x} \bigg(\int_0^t w \bigg)^{-\frac{p}{p-r}} w(t) V_r(0,t)^{\frac{pr}{p-r}} dt \bigg)^{\frac{p-r}{pr}}.
\end{equation*}
Moreover, the best constant $C$ in \eqref{main} satisfies $C \approx C_1 + C_3$.

\rm{(iv)}  If  $r < p \leq q < 1$, then \eqref{main} holds for all $f\in\Mpl$ if and only if $\max\{C_1, C_2, C_3\} < \infty$. Moreover, the best constant $C$ in \eqref{main} satisfies $C \approx C_1 + C_2 + C_3$.

\rm{(v)} If $q < p \leq r \leq 1$, then \eqref{main} holds for all $f\in\Mpl$ if and only if $\max\{C_4, C_5\} < \infty$, where
\begin{equation*}
    C_4 = \bigg(\int_0^{\infty} \bigg(\int_t^{\infty} u\bigg)^{\frac{q}{p-q}} u(t)  \esup_{s\in (0, t)}  \bigg(\int_0^s w \bigg)^{-\frac{q}{p-q}} V_r(0,s)^{\frac{pq}{p-q}} dt \bigg)^{\frac{p-q}{pq}}
\end{equation*}
and
\begin{align*}
    C_5 &=  \bigg(\int_0^{\infty} \bigg(\int_0^x w\bigg)^{-\frac{p}{p-q}} w(x) \bigg( \int_0^{x} \bigg( \int_t^{x} u \bigg)^{\frac{q}{1-q}} u(t) V_r(0,t)^{\frac{q}{1-q}} dt \bigg)^{\frac{p(1-q)}{p-q}} dx \bigg)^{\frac{p-q}{pq}}
        \\
    & \hspace{1cm}+  \bigg(\int_0^{\infty}  w\bigg)^{-\frac{1}{p}} \bigg( \int_0^{\infty} \bigg( \int_t^{\infty} u \bigg)^{\frac{q}{1-q}} u(t) V_r(0,t)^{\frac{q}{1-q}} dt \bigg)^{\frac{1-q}{q}}.
\end{align*}
Moreover, the best constant $C$ in \eqref{main} satisfies $C \approx C_4 + C_5$.

\rm{(vi)} If  $q < p$, $q < 1$, $r < p$, and $r \leq 1$, then \eqref{main} holds for all $f\in\Mpl$ if and only if $\max\{C_5, C_6\} < \infty$, where
\begin{align*}
    C_6 &=  \bigg(\int_0^{\infty} \bigg(\int_0^x w\bigg)^{-2}w(x)  \sup_{y \in (0, x)} \bigg(\int_0^y w \bigg) \bigg(\int_y^x u(s)\bigg(\int_s^{\infty} u \bigg)^{\frac{q}{p-q}}ds\bigg) \times
        \\
    & \hspace{2cm}\times \bigg(\int_0^y \bigg(\int_0^s w \bigg)^{-\frac{p}{p-r}} w(s) V_r(0,s)^{\frac{pr}{p-r}} ds \bigg)^{\frac{q(p-r)}{r(p-q)}}  dx \bigg)^{\frac{p-q}{pq}}.
\end{align*}
Moreover, the best constant $C$ in \eqref{main} satisfies $C \approx C_5 + C_6$.

\rm{(vii)} If  $r \leq 1 \leq q < p$, then \eqref{main} holds for all $f\in\Mpl$ if and only if $\max\{C_6, C_7\} < \infty$, where\begin{align*}
C_7 &= \bigg(\int_0^{\infty} \bigg(\int_0^x w\bigg)^{-\frac{p}{p-q}} w(x) \esup_{t \in (0,x)} \bigg( \int_t^{\infty} u \bigg)^{\frac{p}{p-q}} V_r(0,t) ^{\frac{pq}{p-q}} dx \bigg)^{\frac{p-q}{pq}}
\notag\\
& \hspace{1cm}+ \bigg(\int_0^{\infty}  w\bigg)^{-\frac{1}{p}} \esup_{t \in (0, {\infty})} \bigg( \int_t^{\infty} u \bigg)^{\frac{1}{q}} V_r(0, t).
\end{align*}
Moreover, the best constant $C$ in \eqref{main} satisfies $C \approx C_6 + C_7$.
\end{thm}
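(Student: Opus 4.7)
The plan is to follow the ``discretize, characterize, anti-discretize'' architecture that is already announced in the introduction and mirrored in the paper's sectioning. I would first, in the spirit of Section~\ref{S:DisChar}, replace \eqref{main} by an equivalent discrete two-weight inequality. Pick a discretizing sequence $\{x_k\}_{k\in\mathbb{Z}}$ adapted either to $W(t)=\int_0^t w$ or to $U(t)=\int_t^{\infty} u$ (whichever is the ``outer'' weight in the regime at hand), satisfying a geometric growth condition such as $W(x_{k+1})\approx 2\,W(x_k)$ or its $U$-counterpart. Using the elementary estimates $\int_{x_k}^{x_{k+1}} w \approx W(x_k)$, the monotonicity of $\int_0^\cdot f^r v$ and $\int_\cdot^\infty f$, and the sub-additivity $(a+b)^r\le a^r+b^r$ valid for $r\le 1$, one collapses the continuous integrals in \eqref{main} into weighted block sums; testing against step functions on the blocks yields the lower bound, while a summation-by-parts and convexity argument provides the matching upper bound.

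Once the discrete reformulation is at hand, each of the seven cases becomes an instance of a discrete two-weight Hardy-type inequality whose characterization is accessible by direct sequence calculus. For $p\le q$ (cases (i)--(iv)) the outer $\ell^q$-norm is ``super-homogeneous'' and the discrete sum can be controlled termwise, producing the supremum-type condition $C_1$ and, when the low-integrability $q<1$ obstructs this (cases (ii), (iv)), the auxiliary integral condition $C_2$. The threshold $r=p$ splits further: for $r<p$ the $V_r$-factor cannot be absorbed into a single block and must be redistributed against $w$ via an Abel-type estimate, which generates $C_3$. In the complementary regime $q<p$ (cases (v)--(vii)) the outer norm is sub-homogeneous; a H\"older-type trick inside the discrete sum yields the integrated conditions $C_4$ and $C_5$, and in those subcases where $r<p$ forces the $V_r$-block not to collapse into an integral, a residual essential supremum survives and is encoded in $C_6$ and $C_7$.

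The truly hard part, and the reason duality has historically been unavoidable, is the anti-discretization in the low-integrability regimes $q<1$ and in the super-homogeneous cases with $r<p$. For the necessity direction I would construct explicit test functions $f$ that are piecewise constant on the discretizing intervals, with amplitudes $a_k\approx V_r(0,x_k)^{-\gamma}$ for an exponent $\gamma=\gamma(p,q,r)$ tuned so that the discrete inequality becomes sharp; plugging such $f$ into \eqref{main} then forces each of the relevant $C_j$ to be finite. For sufficiency, the bound involving $C_6$ requires a two-sided comparison between an $\esup_y$ of an Abel-type product in $u$, $w$ and $V_r$, and the integrated form appearing in the statement; this is where the assumption $r\le 1$ and the monotonicity of the partial integrals of $u$, $w$ and $v^{1/(1-r)}$ are used in full.

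Passing back from the discrete estimates to the continuous conditions $C_1,\ldots,C_7$ would use the anti-discretization lemmas collected in Section~\ref{S:NotPre}, in the spirit of the technique originating from \cite{Go-Pi:03}, which give two-sided control of sums of the form $\sum_k V_r(x_k,x_{k+1})^{\alpha}\bigl(\int_{x_k}^{x_{k+1}} u\bigr)^{\beta}$ by the corresponding continuous integrals. I expect the bookkeeping in cases (vi) and (vii) --- which contain the most intricate mixed sup/integral structures --- to be the main technical bottleneck, but I do not foresee any step requiring a conceptual breakthrough beyond the systematic replacement of duality by the explicit test-function constructions sketched above.
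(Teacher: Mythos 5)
Your architecture---discretize, characterize, anti-discretize---is indeed the one the paper uses, and your identification of the anti-discretization in the low-integrability and mixed-parameter regimes as the technical bottleneck is accurate. But there is a structural gap in your discretization step that is worth naming precisely, because it is what makes the whole scheme duality-free.

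The paper never switches the discretizing sequence between $W$ and $U$: it always discretizes with respect to $W(t)=\int_0^t w$, because it is the RHS (the Copson side) that must be collapsed into a clean block sum, and this is what Lemma~\ref{L:int-sup-equiv} together with \eqref{inc.sum-sum} accomplishes. More importantly, when one then attacks the LHS, the inner integral $\int_0^t f^rv$ straddles many blocks, so the LHS does not reduce to a single weighted block sum. The paper splits $\int_0^t=\int_0^{x_k}+\int_{x_k}^t$ and uses the elementary equivalence of powers of sums, obtaining that \eqref{main} is equivalent to \emph{two} discrete inequalities simultaneously (Lemma~\ref{L:equiv. ineq.}): a ``global'' discrete Hardy-type inequality \eqref{Ak inequality}, whose characterization uses Theorem~\ref{T:disc.hardy} and produces the $A_i$-conditions, and a ``local'' $\ell^p\hookrightarrow\ell^q$ embedding \eqref{Bk inequality} whose weights are the block Hardy constants $B(x_k,x_{k+1})$, which must in turn be evaluated via the continuous Hardy inequality on each block (Theorem~\ref{T:cont.hardy}) and then fed into the discrete Landau theorem (Theorem~\ref{T:disc.lp.emb.}), producing the $B_j$-conditions. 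The conditions $C_1,\dots,C_7$ in the statement are then obtained by anti-discretizing $A_i+B_j$ jointly, not by ``termwise control'' or a single Hölder trick; e.g.\ $C_1\approx A_1+B_1$ while $C_5$ is controlled via $B_3$, and the proof of $C_4\lesssim A_3+B_3$ requires two integrations by parts, Abel summation \eqref{Abel}, and the identity \eqref{difference-u}. Your sketch of piecewise constant test functions with amplitudes $V_r(0,x_k)^{-\gamma}$ also differs from the paper's construction, which tests with block-supported near-optimizers $h_k$ and $g_k$ for $V_r(x_{k-1},x_k)$ and $B(x_k,x_{k+1})$ respectively and then feeds in arbitrary weights $a_k$, matching exactly the discrete inequalities to be proven. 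None of these are conceptual obstacles to your plan, but without recognizing the two-inequality decomposition and the block-wise continuous Hardy subproblem, the discretization step would not close.
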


If $ 0<r\leq q< p < 1$, an equivalent condition for the case (vi) of Theorem~\ref{T:main}, improved in a sense, is available. This result will be particularly handy for future applications, for instance to obtaining a characterization of multipliers between Ces\`aro and Copson spaces (for partial result in this direction see~\cite{Go-Mu-Un:19}).

\begin{thm}\label{T:equiv.solut.}
Let $0 < r \leq q <  p < 1 $ and let $u,v,w$ be weights on $(0, {\infty})$. Then \eqref{main} holds for all $f\in\Mpl$ if and only if $\max\{\mathcal{C}_5,  \mathcal{C}_6\} < \infty$, where
\begin{align*}
\mathcal{C}_5 &= \bigg(\int_0^{\infty} \bigg(\int_0^x w\bigg)^{-\frac{p}{p-q}} w(x) \bigg( \int_0^{x} \bigg( \int_t^{\infty} u \bigg)^{\frac{q}{1-q}} u(t) V_r(0,t)^{\frac{q}{1-q}} dt \bigg)^{\frac{p(1-q)}{p-q}} dx \bigg)^{\frac{p-q}{pq}}
\notag\\
& \hspace{1cm} + \bigg(\int_0^{\infty} w\bigg)^{-\frac{1}{p}} \bigg( \int_0^{\infty} \bigg( \int_t^{\infty} u \bigg)^{\frac{q}{1-q}} u(t) V_r(0, t)^{\frac{q}{1-q}}  dt \bigg)^{\frac{1-q}{q}}
\end{align*}
and
\begin{equation*}
\mathcal{C}_6 = \bigg(\int_0^{\infty} \bigg(\int_t^{\infty} u\bigg)^{\frac{q}{p-q}} u(t)  \bigg(\int_0^t \bigg(\int_0^s w \bigg)^{-\frac{p}{p-r}} w(s) V_r(0, s)^{\frac{pr}{p-r}} ds \bigg)^{\frac{q(p-r)}{r(p-q)}} dt \bigg)^{\frac{p-q}{pq}}.
\end{equation*}
Moreover, the best constant $C$ in \eqref{main} satisfies $C \approx \mathcal{C}_5 + \mathcal{C}_6$.
\end{thm}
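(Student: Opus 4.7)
The plan is to invoke Theorem~\ref{T:main}(vi), which already characterises \eqref{main} in a strictly larger parameter range by $\max\{C_5,C_6\}<\infty$, and then to show that under the sharper restriction $0<r\leq q<p<1$ one has $\max\{C_5,C_6\}\approx\max\{\mathcal{C}_5,\mathcal{C}_6\}$.

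The bound $C_5\leq\mathcal{C}_5$ is immediate from $\int_t^x u\leq\int_t^\infty u$ on $(0,x)$ together with positivity of all occurring exponents, while the second summands of $C_5$ and $\mathcal{C}_5$ coincide by definition. For the three remaining comparisons the two main tools are (a) the decomposition $\int_t^\infty u=\int_t^x u+\int_x^\infty u$ combined with $(a+b)^\alpha\approx a^\alpha+b^\alpha$ for $\alpha>0$, applied with $\alpha=q/(1-q)$ and with $\alpha=p(1-q)/(p-q)$ (both positive under the hypotheses); and (b) the closed-form identity $\int_y^x u(s)\,U(s)^{q/(p-q)}\,ds=\tfrac{p-q}{p}\bigl(U(y)^{p/(p-q)}-U(x)^{p/(p-q)}\bigr)$, where $U(t)=\int_t^\infty u$. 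These are to be used together with integration by parts in the variables $x$ or $t$ against the primitives $W^{-q/(p-q)}$ (where $W(x)=\int_0^x w$) and $U^{p/(p-q)}$, and Fubini's theorem.

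Specifically, applying (a) inside the $\mathcal{C}_5$-integral reduces the estimate $\mathcal{C}_5\lesssim C_5+\mathcal{C}_6$ to absorbing the remainder $\int_0^\infty W^{-p/(p-q)}w\,U^{pq/(p-q)}A^{p(1-q)/(p-q)}\,dx$, where $A(x)=\int_0^x u(t)V_r(0,t)^{q/(1-q)}\,dt$; this is handled by integrating by parts in $x$ and applying Fubini, after which the resulting integral matches the shape of $\mathcal{C}_6^{pq/(p-q)}$. Using (b), the sliding supremum defining $C_6$ splits into a piece controlled by $\mathcal{C}_6$ via Fubini and a piece involving $U(x)^{p/(p-q)}$ that is absorbed into the second summand of $\mathcal{C}_5$, yielding $C_6\lesssim\mathcal{C}_5+\mathcal{C}_6$. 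The reverse direction $\mathcal{C}_6\lesssim C_5+C_6$ is obtained by integrating $\mathcal{C}_6^{pq/(p-q)}$ by parts in $t$ via (b): the relation $u(t)U(t)^{q/(p-q)}dt=-\tfrac{p-q}{p}\,dU(t)^{p/(p-q)}$ yields
\begin{equation*}
\mathcal{C}_6^{\frac{pq}{p-q}}\approx\int_0^\infty U(t)^{\frac{p}{p-q}}H(t)^{\frac{p(q-r)}{r(p-q)}}h(t)\,dt,
\end{equation*}
where $H(t)=\int_0^t W(s)^{-p/(p-r)}w(s)V_r(0,s)^{pr/(p-r)}\,ds$ and $h=H'$; specialising $y=t$ inside the supremum defining $C_6$ and applying Fubini then produces a matching lower bound on $C_6^{pq/(p-q)}$.

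The principal technical obstacle is the handling of the sliding supremum in $C_6$: one must juggle three monotone profiles simultaneously ($W$ and $H$ increasing, $U$ decreasing), each raised to a non-trivial fractional exponent, and reorganise them through Fubini without loss. The sharper hypothesis $r\leq q<p<1$ enters critically at three points of the argument: positivity of $q/(1-q)$ and of $p(1-q)/(p-q)$ legitimises the $(a+b)^\alpha$-splitting, non-negativity of $q-r$ keeps the exponent of $H$ in the integrated-by-parts form of $\mathcal{C}_6$ non-negative so that boundary terms at $0$ vanish, and $q<p$ ensures the primitives $W^{-q/(p-q)}$ and $U^{p/(p-q)}$ have the right decay for the Fubini manipulations to be everywhere legitimate.
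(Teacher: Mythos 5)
Your high-level plan -- invoking Theorem~\ref{T:main}(vi), which covers the parameter range strictly containing $0<r\leq q<p<1$, and then proving $\max\{C_5,C_6\}\approx\max\{\mathcal{C}_5,\mathcal{C}_6\}$ at the continuous level -- is conceptually legitimate and is a genuinely different route from the paper, which never touches $C_5,C_6$ directly but instead re-opens the discrete characterization $C\approx A_4+B_3$ from Theorem~\ref{C:discrete solutions}(vi) and compares $A_4,B_3$ with $\mathcal{C}_5,\mathcal{C}_6$ by recycling the intermediate estimates \eqref{B3<C5}, \eqref{A4<J1+J2+J3}, \eqref{J2<C5}, \eqref{J3<C5}, \eqref{C4<A3+B3}, \eqref{C61<A4}, \eqref{D1<A3+B3} and \eqref{D2<A4} from the proofs of Theorem~\ref{T:main}(v) and (vi). Only the trivial inclusion $C_5\leq\mathcal{C}_5$ in your sketch coincides with a step the paper also takes.

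There is, however, a genuine gap in the central estimates. For $\mathcal{C}_5\lesssim C_5+\mathcal{C}_6$ you produce the remainder
\begin{equation*}
R=\int_0^\infty W(x)^{-\frac{p}{p-q}}w(x)\,U(x)^{\frac{pq}{p-q}}A(x)^{\frac{p(1-q)}{p-q}}\,dx,\qquad A(x)=\int_0^x u(t)V_r(0,t)^{\frac{q}{1-q}}dt,
\end{equation*}
and claim that integration by parts in $x$ plus Fubini makes $R$ ``match the shape of $\mathcal{C}_6^{pq/(p-q)}$''. This is not substantiated and does not appear to be true as stated. After the Fubini reduction one is left with an integrand of the form $u(t)\,V_r(0,t)^{q/(1-q)}A(t)^{q(1-p)/(p-q)}U(t)^{pq/(p-q)}W(t)^{-q/(p-q)}$, whereas (after the legitimate integration by parts in $t$) $\mathcal{C}_6^{pq/(p-q)}\approx\int_0^\infty U(t)^{p/(p-q)}W(t)^{-p/(p-r)}w(t)V_r(0,t)^{pr/(p-r)}H(t)^{p(q-r)/(r(p-q))}dt$. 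The $V_r$-exponents ($\tfrac{q}{1-q}$ versus $\tfrac{pr}{p-r}$), the $U$-exponents ($\tfrac{pq}{p-q}$ versus $\tfrac{p}{p-q}$, which differ since $p<1$) and the $W$-exponents ($\tfrac{q}{p-q}$ versus $\tfrac{p}{p-r}$) all disagree, and no monotonicity argument closes the gap without additional structure. In the paper this comparison is precisely what the discretization machinery is for: the inner-block parts of $\mathcal{C}_5$ and the tail parts are separated along a discretizing sequence and then controlled by $B_3$ and $A_4$ respectively (via Minkowski and the lemmas \eqref{dec.sum-sum}, \eqref{difference-u}), rather than by $C_5$ and $\mathcal{C}_6$ themselves. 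The same concern applies to your handling of the sliding supremum in $C_6$: taking the diagonal choice $y=t$ and Fubini does not reproduce the weight $W^{-p/(p-r)}$ appearing in $\mathcal{C}_6$; the paper resolves this by introducing a second discretizing sequence $\{y_k\}$ chosen to almost-attain the supremum blockwise. You would need to supply these missing reductions (or an equivalent continuous substitute for the blockwise argument) before the proof could be considered complete.
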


\begin{rem}
The best constant in \eqref{GMU-inequality}, i.e.~the norm of the embedding $\Cop_{p_1,q_1}(u_1,v_1) \hookrightarrow \Ces_{p_2,q_2}(u_2,v_2)$, can be obtained from the characterization of \eqref{main} on changing the parameters as in \eqref{parameters} and \eqref{weights}. The norm of the converse embedding,  $\Ces_{p_1,q_1}(u_1,v_1) \hookrightarrow \Cop_{p_2,q_2}(u_2,v_2)$, can be reduced to \eqref{GMU-inequality} by using the change of variables $t\mapsto 1/t$.
\end{rem}

\section{Embeddings between $\Lambda^p(w)$ and $S^q(u)$} \label{S:Emb-S-L}

Constant functions always belong to $S^q(u)$ for every $q$ and $u$, because the functional $f\mapsto f^{**} - f^*$ vanishes on them, but they do not necessarily belong to $\Lambda^p(w)$ (that depends on $p$ and $w$). It is thus reasonable to study embeddings between $S^q(u)$ and $\Lambda^p(w)$ restricted to the collection
\begin{equation*}
\mathbb{A} = \{f\in \mathcal{M}^+ : \lim\limits_{t\rightarrow \infty} f^*(t) = 0\}.
\end{equation*}
In \cite[Proposition~7.2]{Ca-Go-Ma-Pi:08}, it was shown that if $0 < p,q < \infty$ and $u, w \in \mathcal{M}^+$, then the embedding $\Lambda^p(w) \hookrightarrow S^q(u)$, restricted to $\mathbb{A}$, holds if and only if
\begin{equation}\label{CGMP-inequality}
\bigg(\int_0^{\infty} \bigg(\frac{1}{t} \int_0^t h(s) ds\bigg)^q u(t) dt\bigg)^{\frac{1}{q}} \leq \mathcal{C} \bigg(\int_0^{\infty} \bigg(\int_t^{\infty} \frac{h(s)}{s}  ds\bigg)^p w(t) dt\bigg)^{\frac{1}{p}}
\end{equation}
is satisfied for every $h \in \mathcal{M}^+$. This result allows one to apply the results of the preceding section to the embedding problem.

\begin{thm} \label{app}
Let $0 < p < \infty$, $0 < q < 1$ and $u,w \in \mathcal{M}^+(0, \infty)$. Then the embedding
\begin{equation}\label{emb.SL}
\Lambda^p(w) \hookrightarrow S^q(u), \quad  \text{restricted to $\mathbb{A}$,}
\end{equation}
holds if and only if

\rm{(i)} either $p \leq q $ and $\max\{E_1, E_2\}<\infty$, where
\begin{equation*}
E_1= \sup_{x\in (0, \infty)} \bigg(\int_0^x w\bigg)^{-\frac{1}{p}} \sup_{t \in (0, x) } t \bigg( \int_t^{\infty} s^{-q} u(s) ds \bigg)^{\frac{1}{q}}
\end{equation*}
and
\begin{equation*}
E_2 =  \sup_{x\in (0, \infty)} \bigg(\int_0^x w\bigg)^{-\frac{1}{p}} \bigg( \int_0^x \bigg( \int_t^{\infty} s^{-q} u(s) ds \bigg)^{\frac{q}{1-q}} t^{\frac{q^2}{1-q}} u(t)  dt \bigg)^{\frac{1-q}{q}},
\end{equation*}

\rm{(ii)} or $q < p \leq 1$ and $\max\{E_3, E_4\}<\infty$, where
\begin{equation*}
E_3 = \bigg(\int_0^{\infty} \bigg(\int_t^{\infty} s^{-q} u(s) ds\bigg)^{\frac{q}{p-q}} t^{-q} u(t)  \sup_{s\in (0, t)}  \bigg(\int_0^s w \bigg)^{-\frac{q}{p-q}} s^{\frac{pq}{p-q}} dt \bigg)^{\frac{p-q}{pq}}
\end{equation*}
and
\begin{align*}
E_4 &=  \bigg(\int_0^{\infty} \bigg(\int_0^x w\bigg)^{-\frac{p}{p-q}} w(x) \bigg( \int_0^{x} \bigg( \int_t^{x} s^{-q} u(s) ds \bigg)^{\frac{q}{1-q}}  t^{\frac{q^2}{1-q}} u(t) dt \bigg)^{\frac{p(1-q)}{p-q}} dx \bigg)^{\frac{p-q}{pq}} \notag\\
& \hspace{1cm}+  \bigg(\int_0^{\infty}  w\bigg)^{-\frac{1}{p}} \bigg( \int_0^{\infty} \bigg( \int_t^{\infty} s^{-q} u(s) ds \bigg)^{\frac{q}{1-q}} t^{\frac{q^2}{1-q}} u(t) dt \bigg)^{\frac{1-q}{q}},
\end{align*}

\rm{(iii)} or $q < 1 < p$ and $\max\{E_4, E_5\} < \infty$, where
\begin{align*}
E_5 &=  \bigg(\int_0^{\infty} \bigg(\int_0^x w\bigg)^{-2}w(x)  \sup_{y \in (0, x)} \bigg(\int_0^y w \bigg) \bigg(\int_y^x s^{-q} u(s) \bigg(\int_s^{\infty} y^{-q} u(y) dy \bigg)^{\frac{q}{p-q}}ds\bigg)
\times  \\
& \hspace{2cm}\times \bigg(\int_0^y \bigg(\int_0^s w \bigg)^{-\frac{p}{p-1}} s^{\frac{p}{p-1}} w(s)  ds \bigg)^{\frac{q(p-1)}{p-q}}  dx \bigg)^{\frac{p-q}{pq}}.
\end{align*}
Moreover, if we denote by $C$ the optimal embedding constant in~\eqref{emb.SL}, then
\begin{equation*}
    C \approx
        \begin{cases}
            E_1+E_2 &\text{in the case \textup{(i)},}
                \\
            E_3 + E_4 &\text{in the case \textup{(ii)},}
                \\
            E_4 + E_5 &\text{in the case \textup{(iii)}.}
        \end{cases}
\end{equation*}
\end{thm}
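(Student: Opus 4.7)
The plan is to reduce Theorem~\ref{app} to Theorem~\ref{T:main} using the equivalence between the embedding \eqref{emb.SL} and the scalar inequality \eqref{CGMP-inequality} already recorded from \cite[Proposition~7.2]{Ca-Go-Ma-Pi:08}. The key bijective substitution $h(s) = s f(s)$ (equivalently $f(s) = h(s)/s$) converts the right-hand side of \eqref{CGMP-inequality} into the right-hand side of \eqref{main} with the same outer weight $w$, and rewrites its left-hand side as
\[
    \bigg(\int_0^\infty \bigg(\int_0^t s\,f(s)\,ds\bigg)^{q} t^{-q} u(t)\,dt\bigg)^{1/q},
\]
which is precisely the left-hand side of \eqref{main} with $r=1$, inner weight $v(s)=s$, and outer weight $\tilde u(t) := t^{-q} u(t)$. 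Hence \eqref{emb.SL} is equivalent to this specialisation of \eqref{main}, with the optimal constants agreeing.

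With $r=1$ and $0<q<1$, the only applicable items of Theorem~\ref{T:main} are (ii), (v) and (vi); all other cases require $q\ge 1$ or force an incompatible strict inequality involving $r$. These three items match the three cases of Theorem~\ref{app}: (ii) of Theorem~\ref{T:main} (requiring $p\leq q<1$, $p\leq r=1$) corresponds to (i) of Theorem~\ref{app}; (v) ($q<p\leq r=1$) to (ii); and (vi) ($q<1<p$, $r=1$) to (iii). Since $V_r(0,x) = \esup_{s\in(0,x)} s = x$, direct substitution of $v(s)=s$ and $\tilde u(t)=t^{-q}u(t)$ into the constants $C_k$ of Theorem~\ref{T:main}, combined with routine algebra such as $\tilde u(t)\,V_r(0,t)^{q/(1-q)} = t^{q^{2}/(1-q)} u(t)$, transforms $C_2$ into $E_2$, $C_4$ into $E_3$, $C_5$ into $E_4$, and $C_6$ into $E_5$.

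The only non-mechanical identification is that of $C_1$ from Theorem~\ref{T:main} with $E_1$ of Theorem~\ref{app}: the former is the single essential supremum $\esup_x (\int_0^x w)^{-1/p}\,x\,(\int_x^\infty s^{-q}u(s)\,ds)^{1/q}$, while $E_1$ is a double supremum. I would interchange the order of the suprema in $E_1$ and invoke the monotonicity of $x \mapsto (\int_0^x w)^{-1/p}$ to collapse $\sup_{x>t}(\int_0^x w)^{-1/p}$ to $(\int_0^t w)^{-1/p}$, reducing $E_1$ to $C_1$ up to the inessential $\sup$/$\esup$ distinction. The equivalences $C \approx C_i + C_j$ provided by Theorem~\ref{T:main} then yield the stated $C \approx E_i + E_j$ for each case. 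I expect the main obstacle to be bureaucratic rather than conceptual: keeping the numerous exponents straight while translating $C_4, C_5, C_6$ into $E_3, E_4, E_5$, and making sure that the seven-way case split of Theorem~\ref{T:main} collapses cleanly onto the three-way split here at the boundary values $p=1$ and $p=q$.
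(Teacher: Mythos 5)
Your proposal is correct and follows essentially the same route as the paper's proof: reduce \eqref{emb.SL} to \eqref{CGMP-inequality} via \cite[Proposition~7.2]{Ca-Go-Ma-Pi:08}, substitute $h(s)=sf(s)$ to reach \eqref{main} with $r=1$, $v(s)=s$, $\tilde u(t)=t^{-q}u(t)$, and then invoke Theorem~\ref{T:main}(ii), (v), (vi) for the three parameter ranges. The extra care you take in collapsing the double supremum in $E_1$ to the single one in $C_1$ (via monotonicity of $x\mapsto(\int_0^x w)^{-1/p}$) is a useful clarification that the paper leaves implicit, but it does not change the substance of the argument.
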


\begin{proof}
It is easy to see that \eqref{CGMP-inequality} is equivalent to \eqref{main} when $r=1$ with $u(t)=t^{-q} u(t)$, $v(t)=t$, $t>0$. Therefore, the result follows on applying Theorem~\ref{T:main}, cases (ii), (v) and (vi) in order to establish the assertion in cases (i), (ii) and (iii), respectively.
\end{proof}

Let us note that Theorem~\ref{app} naturally complements~\cite[Corollary~7.3]{Ca-Go-Ma-Pi:08}, in which a characterization of~\eqref{emb.SL} is given for $1\le q<\infty$.

\begin{rem}
A converse embedding to~\eqref{emb.SL}, namely
\begin{equation*}
    S^p(u) \hookrightarrow \Lambda^q(w), \quad\text{restricted to $\mathbb{A}$,}
\end{equation*}
can be obtained from~\eqref{emb.SL} by a simple change of variables $t\mapsto \frac{1}{t}$ (cf.~\cite[Proposition~7.1]{Ca-Go-Ma-Pi:08}). One gets that~\eqref{emb.SL} holds if and only if
\begin{equation*}
    \Lambda^p(\tilde u ) \hookrightarrow S^q(\tilde w)
\end{equation*}
with $\tilde u(t) = u(1/t)t^{p-2}$ and $\tilde w(t) = w(1/t) t^{q-2}$, and with identical embedding constants.
\end{rem}

\section{Background material}\label{S:NotPre}

We start by quoting some basic definitions and facts concerning discretization and anti-discretization. We will denote by $\LHS(*)$ and $\RHS(*)$, the left hand side and right hand side of an inequality $(*)$, respectively.

\begin{defi}
Let $N, M \in \mathbb{Z} \cup \{-\infty, +\infty\}$, $N < M$ and $\{a_k\}_{k=N}^M$ be a sequence of positive real numbers. If

\begin{equation*}
    \sup\bigg\{ \frac{a_{k+1}}{a_k}: N < k < M\bigg\} < 1 \quad \text{or} \quad \inf\bigg\{ \frac{a_{k+1}}{a_k}: N < k < M\bigg\} > 1,
\end{equation*}
then we say that $\{a_k\}_{k=N}^M$ is \textit{strongly decreasing} or \textit{strongly increasing}, respectively.
\end{defi}

The following lemma will be frequently used throughout the paper. Its proof can be found in \cite[Proposition~2.1]{Go-He-St:96} for \eqref{dec.sum-sum} and \eqref{inc.sum-sum}  and \cite[Lemmas~3.2-3.4]{Go-Pi:03} for the remaining cases.

\begin{lem} \cite{Go-Pi:03,Go-He-St:96}
Let $\alpha\in (0,\infty)$ and $N, M \in  \mathbb{Z} \cup \{-\infty, +\infty\}$, $N < M$. Assume that $\{a_k\}_{k=N}^M$ and  $\{b_k\}_{k=N}^M$ are sequences of positive numbers.

\rm{(i)} If $\{a_k\}_{k=N}^M$ is non-increasing, then
\begin{equation} \label{dec.sup-sup}
    \sup_{N \leq k \leq M} a_k \sup_{N \leq i \leq k} b_i =  \sup_{N \leq k \leq M} a_k b_k.
\end{equation}
If, in addition, $\{a_k\}_{k=N}^M$ is strongly decreasing, then
\begin{equation}\label{dec.sum-sum}
    \sum_{k=N}^M a_k^\alpha \bigg(\sum_{i=N}^k b_i \bigg)^\alpha  \approx \sum_{k=N}^M a_k^\alpha b_k^\alpha,
\end{equation}
\begin{equation} \label{dec.sum-sup}
    \sum_{k=N}^M a_k \sup_{N \leq i\leq k} b_i   \approx \sum_{k=N}^M a_k b_k
\end{equation}
and
\begin{equation}\label{dec.sup-sum}
    \sup_{N \leq k\leq M} a_k \bigg(\sum_{i=N}^k b_i \bigg)^\alpha \approx \sup_{N \leq k\leq M} a_k b_k^\alpha.
\end{equation}

\rm{(ii)} If $\{a_k\}_{k=N}^M$ is strongly increasing, then
\begin{equation}\label{inc.sum-sum}
    \sum_{k=N}^M a_k^\alpha \bigg(\sum_{i=k}^M b_i \bigg)^\alpha  \approx \sum_{k=N}^M a_k^\alpha b_k^\alpha
\end{equation}
and
\begin{equation}\label{inc.sup-sum}
    \sup_{N \leq k\leq M} a_k \bigg(\sum_{i=k}^M b_i \bigg)^\alpha \approx \sup_{N \leq k\leq M} a_k b_k^\alpha.
\end{equation}
\end{lem}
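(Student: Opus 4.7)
In every one of the listed equivalences the ``$\gtrsim$'' direction is immediate—one simply keeps only the diagonal term $i=k$ in the inner sum or supremum—so the whole task will be to establish the reverse ``$\lesssim$'' inequality. I would dispose first of the identity \eqref{dec.sup-sup}, which requires only that $\{a_k\}$ be non-increasing: for $i\le k$ one has $a_k b_i \le a_i b_i$, and therefore
\[
\sup_k a_k \sup_{i\le k} b_i \;=\; \sup_k \sup_{i\le k} a_k b_i \;\le\; \sup_i a_i b_i \;=\; \sup_k a_k b_k,
\]
which combined with the obvious reverse inequality yields equality.

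For the remaining parts of (i) I would set $q:=\sup_k a_{k+1}/a_k \in (0,1)$, so that $a_k/a_i \le q^{k-i}$ whenever $i\le k$, and record the two elementary geometric bounds
\[
\sum_{k\ge i} a_k^s \;\lesssim\; a_i^s \qquad\text{and}\qquad \sum_{i\le k} a_i^{-s} \;\lesssim\; a_k^{-s} \qquad (s>0),
\]
which are the only technical input needed. For \eqref{dec.sum-sum} with $\alpha\le 1$ I would use the subadditivity $(\sum b_i)^\alpha \le \sum b_i^\alpha$ and interchange the order of summation, getting $\sum_k a_k^\alpha (\sum_{i\le k} b_i)^\alpha \le \sum_i b_i^\alpha \sum_{k\ge i} a_k^\alpha \lesssim \sum_i a_i^\alpha b_i^\alpha$; the case $\alpha>1$ would follow by first applying Minkowski's inequality in $\ell^\alpha$ to reduce matters to $\alpha=1$. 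For \eqref{dec.sum-sup} I would apply Abel summation: with $\tilde b_k := \sup_{i\le k} b_i$ non-decreasing and $\Delta_k:=\tilde b_k-\tilde b_{k-1}\in[0,b_k]$, one rewrites $\sum_k a_k\tilde b_k$ as $\sum_k \Delta_k\sum_{j\ge k} a_j$ (plus a boundary term handled in the same way), and the tail-sum bound produces $\lesssim \sum_k a_k\Delta_k \le \sum_k a_k b_k$.

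For \eqref{dec.sup-sum} I would fix any index $k^*$ almost attaining the supremum on the left and split $b_i=(a_i^s b_i)\cdot a_i^{-s}$ with a free exponent $s>0$. Hölder's inequality with conjugates $\alpha,\alpha'$ gives
\[
\Big(\sum_{i\le k^*}b_i\Big)^{\!\alpha} \le \Big(\sum_{i\le k^*} a_i^{s\alpha}b_i^\alpha\Big) \Big(\sum_{i\le k^*} a_i^{-s\alpha'}\Big)^{\!\alpha/\alpha'},
\]
and bounding the first factor above by $(\sup_i a_i b_i^\alpha)\sum_{i\le k^*} a_i^{s\alpha-1}$ and applying the two geometric estimates above (both valid provided $s\in(0,1/\alpha)$) makes every $a_{k^*}$-factor cancel and leaves exactly $\sup_i a_i b_i^\alpha$; for $\alpha\le 1$ the same subadditivity trick bypasses Hölder entirely. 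Part (ii) I would deduce from (i) by the relabelling $k\mapsto -k$, which interchanges strong increase with strong decrease and swaps $\sum_{i\ge k}$ with $\sum_{i\le k}$, so that \eqref{inc.sum-sum} and \eqref{inc.sup-sum} become special cases of \eqref{dec.sum-sum} and \eqref{dec.sup-sum}. The hard part will be the Hölder step in \eqref{dec.sup-sum} when $\alpha>1$: $s$ has to be chosen simultaneously small enough that $s\alpha<1$ (so $\sum a_i^{s\alpha-1}$ behaves geometrically), positive (so $\sum a_i^{-s\alpha'}$ does), and so that the exponents of $a_{k^*}$ cancel exactly. The range $s\in(0,1/\alpha)$ makes all three requirements simultaneously achievable, after which everything collapses into routine bookkeeping with the two geometric bounds.
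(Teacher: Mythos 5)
The paper does not prove this lemma itself; it cites \cite[Proposition~2.1]{Go-He-St:96} and \cite[Lemmas~3.2--3.4]{Go-Pi:03}, so your argument can only be judged on its own merits. Most of it is sound: the identity \eqref{dec.sup-sup}, the two geometric tail bounds, the $\alpha\le 1$ cases via subadditivity, the Abel argument for \eqref{dec.sum-sup}, the weighted H\"older split for \eqref{dec.sup-sum}, and the reindexing $k\mapsto -k$ for part (ii) are all correct.

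However, there is a genuine gap in \eqref{dec.sum-sum} for $\alpha>1$. Minkowski's inequality in $\ell^\alpha$ gives
\begin{equation*}
\Big(\sum_k a_k^\alpha\Big(\sum_{i\le k}b_i\Big)^\alpha\Big)^{1/\alpha}
\le \sum_i b_i\Big(\sum_{k\ge i}a_k^\alpha\Big)^{1/\alpha}
\lesssim \sum_i a_i b_i,
\end{equation*}
but $\big(\sum_i a_ib_i\big)^\alpha$ does \emph{not} control $\sum_k a_k^\alpha b_k^\alpha$ when $\alpha>1$; the power-mean inequality runs the other way. A concrete counterexample to the proposed reduction: take $a_k=q^k$, $b_k=q^{-k}$, $k=1,\dots,n$; then $\sum a_kb_k=n$ while $\sum a_k^\alpha b_k^\alpha=n$, so $(\sum a_kb_k)^\alpha=n^\alpha\gg n$. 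Thus ``reduce to $\alpha=1$ via Minkowski'' does not yield the desired bound. The fix is to run exactly the same H\"older trick you use for \eqref{dec.sup-sum}: write $b_i=(a_i^s b_i)a_i^{-s}$ with $s\in(0,1)$, apply H\"older with exponents $\alpha,\alpha'$ to get
\begin{equation*}
\Big(\sum_{i\le k}b_i\Big)^\alpha \lesssim a_k^{-s\alpha}\sum_{i\le k}a_i^{s\alpha}b_i^\alpha,
\end{equation*}
then multiply by $a_k^\alpha$, sum over $k$, interchange, and use $\sum_{k\ge i}a_k^{\alpha(1-s)}\lesssim a_i^{\alpha(1-s)}$; the exponents collect to $s\alpha+\alpha(1-s)=\alpha$, giving $\lesssim\sum_i a_i^\alpha b_i^\alpha$. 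With this replacement, your proof is complete; as a minor caution, when $N=-\infty$ one should note that finiteness of $\sum a_kb_k$ forces $b_k\to 0$ (since $a_k\to\infty$), so the Abel boundary term $\tilde b_N\sum_k a_k$ vanishes.
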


The next lemma will be used in the proofs of our main results to handle the power of a sum. Its proof can be found in~\cite[Lemmas~1 and~1']{Be-Gr:05}.

\begin{lem} \rm{(power rules)}
Let $N, M \in \mathbb{Z}\cup \{-\infty, + \infty\}$, $N < M$ and $\beta >0$. Assume that $\{a_k\}_{k=N}^M$ is a sequence of non-negative real numbers such that $0 < \sum_{i=k}^M a_i <\infty$ for every $k \in \mathbb{Z}$ when  $0 < \beta < 1$. Then
\begin{equation}\label{power-rule for tails}
    \sum_{k=N}^{M} a_k \bigg(\sum_{j=k}^{M} a_j \bigg)^{\beta-1} \approx \bigg( \sum_{k=N}^{M} a_k \bigg)^{\beta}
\end{equation}
holds.
\end{lem}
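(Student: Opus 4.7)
The plan is to set $S_k := \sum_{j=k}^M a_j$ and to prove both halves of
\begin{equation*}
\sum_{k=N}^M a_k \, S_k^{\beta-1} \approx S_N^{\beta}
\end{equation*}
by comparing the left-hand side with the telescoping identity $S_N^\beta = \sum_k (S_k^\beta - S_{k+1}^\beta)$ through the representation
\begin{equation*}
S_k^\beta - S_{k+1}^\beta \;=\; \beta \int_{S_{k+1}}^{S_k} t^{\beta-1}\, dt.
\end{equation*}
The hypothesis $0 < S_k < \infty$ (which is needed precisely when $\beta < 1$) guarantees that $\{S_k\}$ is a strictly positive, non-increasing sequence with $S_{M+1} = 0$, so all of the above manipulations are legitimate and the integral comparison is governed solely by the monotonicity of $t \mapsto t^{\beta-1}$.

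When $\beta \geq 1$, both estimates will be immediate. The upper bound follows from $S_k \leq S_N$ and $\beta - 1 \geq 0$, giving $\sum_k a_k S_k^{\beta-1} \leq S_N^{\beta-1} \sum_k a_k = S_N^\beta$. For the lower bound, the increasing function $t \mapsto t^{\beta-1}$ forces $S_k^\beta - S_{k+1}^\beta \leq \beta a_k S_k^{\beta-1}$, and telescoping gives $S_N^\beta \leq \beta \sum_k a_k S_k^{\beta-1}$. When $0 < \beta < 1$, the same integral representation (now with a decreasing integrand, so that the pointwise bound on the integrand flips) hands over the upper bound $\beta \sum_k a_k S_k^{\beta-1} \leq S_N^\beta$ in exactly the same way.

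The only point that needs a genuine idea is the lower bound in the case $0 < \beta < 1$, where the pointwise comparison runs in the wrong direction. The plan here is a geometric blocking argument: set $k_0 := N$ and, inductively, let $k_{j+1}$ be the smallest index greater than $k_j$ for which $S_{k_{j+1}} \leq S_{k_j}/2$; such an index always exists because $S_k \to 0$. Inside the block $[k_j, k_{j+1})$ one has $S_k \in [S_{k_j}/2, S_{k_j}]$, and since $\beta - 1 < 0$ this yields $S_k^{\beta-1} \geq S_{k_j}^{\beta-1}$, so
\begin{equation*}
\sum_{k=k_j}^{k_{j+1}-1} a_k \, S_k^{\beta-1} \;\geq\; S_{k_j}^{\beta-1}\bigl(S_{k_j} - S_{k_{j+1}}\bigr) \;\geq\; \tfrac{1}{2}\, S_{k_j}^{\beta}.
\end{equation*}
Retaining only the $j=0$ contribution gives $\sum_k a_k S_k^{\beta-1} \geq S_N^{\beta}/2$, which closes the plan. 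The subtle technical point I expect to spend some time on is the clean handling of the endpoint behaviour (when $M$ is finite, or when the tail collapses to zero in finitely many steps); the positivity hypothesis on every $S_k$ is precisely the assumption that keeps the halving construction well-defined and rules out any degenerate case at the right endpoint.
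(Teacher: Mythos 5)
Your proof is correct, and it fills a gap the paper deliberately leaves open: the paper proves nothing here and simply cites \cite[Lemmas~1 and~1']{Be-Gr:05}. Your argument is the standard elementary one: setting $S_k=\sum_{j\ge k}a_j$ and writing $S_k^\beta-S_{k+1}^\beta=\beta\int_{S_{k+1}}^{S_k}t^{\beta-1}\,dt$ handles three of the four bounds by monotone comparison of the integrand with its endpoint value plus telescoping, and the remaining bound (lower bound, $0<\beta<1$) by the halving construction, which correctly produces $S_N^\beta/2$ using only the first block. All the constants end up depending only on $\beta$, as they must. A couple of small remarks. First, as you note in passing, the block estimate only uses $S_k\le S_{k_j}$, not the two-sided containment $S_k\in[S_{k_j}/2,S_{k_j}]$; stating only the upper bound on $S_k$ and separately using $S_{k_j}-S_{k_{j+1}}\ge S_{k_j}/2$ makes the logic slightly cleaner. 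Second, when $N=-\infty$ you cannot literally set $k_0:=N$; the fix is either to run the halving argument from an arbitrary finite $k_0\le M$ to get $\sum_k a_kS_k^{\beta-1}\ge\frac12 S_{k_0}^\beta$ and then let $k_0\to-\infty$ using $S_{k_0}\uparrow S_N$, or to observe that the telescoping lower bound already reduces matters to finite $N$. These are presentational issues, not gaps in the underlying idea, and the positivity hypothesis is used exactly where you say it is, namely to make every $S_k$ a genuine positive real and so to keep $S_k^{\beta-1}$ finite when $\beta<1$.
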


The following lemma will be useful for finding various equivalent conditions for principal inequalities.

\begin{lem}
Let $s>0$, $N, M \in {\mathbb{Z}}\cup\{-\infty, +\infty\}$, $N<M$. Assume that $\{a_k\}_{k=N}^M$, $\{b_k\}_{k=N}^M$ are sequences of non-negative numbers such that $\{b_k\}_{k=N}^M$ is non-decreasing. If $N=-\infty$, denote by $b_N = \lim_{k\rightarrow -\infty} b_k$. Then,
\begin{equation}\label{difference-u}
	\sum_{k=N}^M a_k \bigg(\sum_{i=k}^M a_i \bigg)^s b_k \approx \sum_{k=N+1}^M (b_k  - b_{k-1} ) \bigg(\sum_{i= k}^M a_i\bigg)^{s+1} +   \bigg(\sum_{k=N}^M a_k\bigg)^{s+1} b_N.
\end{equation}

\end{lem}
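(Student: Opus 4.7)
The plan is to combine Abel summation (which gives an exact rearrangement) with the power rule \eqref{power-rule for tails} (which produces the $(s+1)$-th power of the tail sums). Throughout, set $A_k=\sum_{i=k}^M a_i$, so that the left-hand side of \eqref{difference-u} is $\sum_{k=N}^M a_k A_k^s b_k$.

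First, I would use monotonicity to decompose $b_k$ as a telescoping sum. Since $\{b_k\}$ is non-decreasing, one has the identity
\begin{equation*}
b_k = b_N + \sum_{i=N+1}^{k} (b_i - b_{i-1}), \qquad N\le k\le M,
\end{equation*}
where $b_N=\lim_{k\to -\infty}b_k$ if $N=-\infty$. Substituting and interchanging the order of summation (all terms are non-negative, so Fubini is unproblematic), I obtain the \emph{exact} identity
\begin{equation*}
\sum_{k=N}^M a_k A_k^s b_k = b_N \sum_{k=N}^M a_k A_k^s + \sum_{i=N+1}^M (b_i - b_{i-1}) \sum_{k=i}^M a_k A_k^s.
\end{equation*}

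The second step is to apply the power rule \eqref{power-rule for tails} with $\beta = s+1$ (which is $\geq 1$ since $s>0$, so the extra hypothesis on convergence of tails is vacuous) to each of the inner sums. It yields, for every $i$,
\begin{equation*}
\sum_{k=i}^M a_k A_k^s = \sum_{k=i}^M a_k \bigg(\sum_{j=k}^M a_j\bigg)^{(s+1)-1} \approx \bigg(\sum_{k=i}^M a_k\bigg)^{s+1} = A_i^{s+1},
\end{equation*}
and likewise $\sum_{k=N}^M a_k A_k^s \approx (\sum_{k=N}^M a_k)^{s+1}$, with implicit constants depending only on $s$. Since the weights $b_N$ and $b_i-b_{i-1}$ are non-negative, these termwise equivalences can be summed to yield \eqref{difference-u}.

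The main technical point I anticipate is checking that nothing pathological occurs at the boundary in the case $N=-\infty$: one has to interpret $\sum_{k=N+1}^M$ as $\sum_{k\in\mathbb{Z},\,k\le M}$ and verify, using the conventions $0\cdot\infty=0$ and $1/\infty=0$ stated in the introduction, that the identity still makes sense when either $b_N=0$, or $b_N=+\infty$, or $\sum_k a_k = \infty$. In each of these degenerate cases both sides of \eqref{difference-u} are easily matched directly, so the substantive content really is the finite-$N$, finite-sum case, which is handled by the two steps above.
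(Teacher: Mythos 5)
Your proof is correct and follows essentially the same route as the paper's: both establish Abel's identity by expanding $b_k$ telescopically and interchanging the order of summation, then apply the power rule \eqref{power-rule for tails} with $\beta = s+1$ (the paper takes $c_k = a_k(\sum_{i=k}^M a_i)^s$ in the Abel step and then invokes the same lemma). Your extra paragraph on the degenerate cases for $N=-\infty$ is a reasonable sanity check but is not needed beyond what the paper implicitly handles.
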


\begin{proof}
Assume that $\{c_k\}_{k=N}^M$ is a sequence of non-negative numbers. Note that if $N= -\infty$, then $N+1$ is interpreted as $(-\infty)$, too. Observe that
$$
b_k = b_N + \sum_{i=N+1}^k(b_i - b_{i-1}).
$$
Then, changing the order of sums yields Abel's identity, more precisely
\begin{equation}\label{Abel}
\sum_{k=N}^M c_k b_k =   \sum_{k=N+1}^M (b_k  - b_{k-1}) \sum_{i= k}^M c_i + \bigg(\sum_{k=N}^M c_k\bigg) b_N.
\end{equation}

Set $c_k = a_k \big(\sum_{i=k}^M a_i\big)^s$ for $k\in[N,M)$ if $M=\infty$ or $k\in[N,M]$ if $M<\infty$. Applying \eqref{power-rule for tails}, we get
\begin{align*}
	\sum_{k=N}^M a_k \bigg(\sum_{i=k}^M a_i \bigg)^s b_k & =  \sum_{k=N+1}^M (b_k  - b_{k-1} ) \sum_{i= k}^M  a_i \big(\sum_{j=i}^M a_j\big)^s +  \bigg(\sum_{k=N}^M  a_k \big(\sum_{i=k}^M a_i\big)^s\bigg) b_N\\
	& \approx \sum_{k=N+1}^M (b_k  - b_{k-1} ) \bigg(\sum_{i= k}^M a_i\bigg)^{s+1} + \bigg(\sum_{k=N}^M a_k\bigg)^{s+1} b_N,
\end{align*}
establishing the claim.
\end{proof}

Let us further recall the discrete version of the classical Landau resonance theorem. Proofs can be found for example in \cite[Proposition~4.1]{Go-Pi:03}. The result can be also extracted from the more general embedding theorem in~\cite{Ka:81}.

\begin{thm}	\label{T:disc.lp.emb.}
Let $p,q \in (0,\infty)$ and $\{w_k\}_{k=N}^M$ and $\{v_k\}_{k=N}^M$ be two sequences of positive real numbers. Then inequality
\begin{equation}\label{Landau}
    \bigg(\sum_{k=N}^M a_k^q v_k^q \bigg)^{\frac{1}{q}} \leq
    C \bigg(\sum_{k=N}^M a_k^p w_k^p \bigg)^{\frac{1}{p}}
\end{equation}
holds for every sequence  $\{a_k\}_{k=N}^M$  of non-negative real numbers if and only if either

\rm(i) $p \leq q $ and
\begin{equation*}
    L_1= \sup_{N \leq k \leq M} v_k w_k^{-1} < \infty,
\end{equation*}
or

\rm(ii) $p > q$ and
\begin{equation*}
    L_2= \bigg(\sum_{k=N}^M v_k^{\frac{pq}{p-q}} w_k^{-\frac{pq}{p-q}} \bigg)^{\frac{p-q}{pq}} < \infty.
\end{equation*}
Moreover, if we denote by $C$ the optimal embedding constant in~\eqref{Landau}, then
\begin{equation*}
    C \approx
        \begin{cases}
            L_1 &\text{in the case \textup{(i)},}
                \\
            L_2 &\text{in the case \textup{(ii)}.}
        \end{cases}
\end{equation*}
\end{thm}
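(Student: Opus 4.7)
The plan is to reduce \eqref{Landau} to a standard form by the substitution $b_k = a_k w_k$ and setting $\sigma_k = v_k w_k^{-1}$, so that the inequality becomes
\begin{equation*}
    \bigg(\sum_{k=N}^M b_k^q \sigma_k^q\bigg)^{\frac{1}{q}} \leq C \bigg(\sum_{k=N}^M b_k^p\bigg)^{\frac{1}{p}}
\end{equation*}
for every non-negative sequence $\{b_k\}_{k=N}^M$. Once phrased this way, both cases (i) and (ii) are classical and reduce to the behaviour of the embedding $\ell^p\hookrightarrow\ell^q$ and to Hölder's inequality, respectively.

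For case (i), when $p\le q$, sufficiency follows from the elementary fact $\|\cdot\|_{\ell^q}\le\|\cdot\|_{\ell^p}$ together with the pointwise bound $\sigma_k\le L_1$, giving $C\lesssim L_1$. Necessity is obtained by plugging the one-term test sequence $a_k = w_{k_0}^{-1}\,\chi_{\{k=k_0\}}$ into \eqref{Landau}: this yields $v_{k_0} w_{k_0}^{-1}\le C$, and taking the supremum over $k_0$ gives $L_1\le C$.

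For case (ii), when $p>q$, sufficiency follows from Hölder's inequality applied with the conjugate exponents $p/q$ and $p/(p-q)$:
\begin{equation*}
    \sum_{k=N}^M b_k^q \sigma_k^q = \sum_{k=N}^M (b_k^p)^{\frac{q}{p}}\sigma_k^q \leq \bigg(\sum_{k=N}^M b_k^p\bigg)^{\frac{q}{p}}\bigg(\sum_{k=N}^M \sigma_k^{\frac{pq}{p-q}}\bigg)^{\frac{p-q}{p}},
\end{equation*}
which gives $C\lesssim L_2$. For necessity, one tests \eqref{Landau} with the extremizer of the above Hölder inequality, namely (assuming for the moment $L_2<\infty$ and truncating the sum if needed) the sequence $a_k = v_k^{\frac{q}{p-q}} w_k^{-\frac{p}{p-q}}$; a direct computation shows that both sides of \eqref{Landau} are controlled by $L_2$ and $L_2 \lesssim C$ follows. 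The only subtlety is the standard truncation argument needed when $L_2 = \infty$, i.e., exhibiting a finite subsum that grows without bound and forcing $C = \infty$.

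There is no real obstacle here — the result is the classical discrete Landau resonance theorem and its proof is a textbook Hölder/duality exercise. The heavier analytic work of the paper lies in the discretisation/antidiscretisation machinery that feeds into Theorem~\ref{T:disc.lp.emb.}, not in Theorem~\ref{T:disc.lp.emb.} itself.
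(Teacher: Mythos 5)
Your argument is correct and is essentially the standard classical proof. The paper does not actually prove Theorem~\ref{T:disc.lp.emb.}; it merely quotes the result and cites \cite[Proposition~4.1]{Go-Pi:03} and \cite{Ka:81} for proofs, and your reduction by the substitution $b_k = a_k w_k$, $\sigma_k = v_k w_k^{-1}$, followed by the $\ell^p\hookrightarrow\ell^q$ embedding in case (i) and H\"older with exponents $p/q$, $p/(p-q)$ together with the extremizer $b_k = \sigma_k^{q/(p-q)}$ in case (ii), is precisely the textbook argument those references contain.

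Two small points worth tidying up if you were to write this out in full. First, in the necessity step for case (ii), it pays to record the computation explicitly: with $b_k = \sigma_k^{q/(p-q)}$ one has $b_k^q\sigma_k^q = b_k^p = \sigma_k^{pq/(p-q)}$, so the inequality reads $L_2^{p/(p-q)} \le C\, L_2^{q/(p-q)}$, giving $L_2\le C$ after cancellation (valid once the truncated sum is finite and positive). Second, the truncation you flag at the end is indeed needed, but it is routine: apply the test sequence on a finite window $N\le k\le M'$, conclude that the corresponding partial sum defining $L_2$ is bounded by $C^{pq/(p-q)}$, and let $M'\to M$.
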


Next, we quote the characterization of the discrete Hardy inequality. In \cite[Theorem~1]{Be:91}, Bennett characterized the discrete Hardy inequality by breaking up into several cases depending on $p, q$. However, the condition presented there for the case $0 < q < p < 1$ had a complicated structure. Later, Grosse-Erdmann in \cite[Theorem~9.2]{Gr:98} presented an improved version of Bennett's theorem. The following theorem is a combination of these results.

\begin{thm}\label{T:disc.hardy}
Assume that $0 < p,q < \infty$ and  $\{a_k\}_{k=N}^M$ and
$\{b_k\}_{k=N}^M$ are sequences of non-negative real numbers. Then the inequality
\begin{equation}\label{E:disc.hardy}
    \bigg( \sum_{k=N}^M \bigg(
    \sum_{i =N}^k x_i b_i \bigg)^q a_k \bigg)^{\frac{1}{q}} \leq C \bigg(\sum_{k=N}^M x_k^p \bigg)^{\frac{1}{p}}
\end{equation}
holds for every sequence $\{x_k\}_{k=N}^M$ of non-negative real numbers if and only if
	
\rm(i) either $p \leq 1$, $p\leq q $ and
\begin{equation*}
    H_1= \sup_{N \leq k \leq M} \bigg(\sum_{i =k}^{M} a_i \bigg)^{\frac{1}{q}} b_k < \infty,
\end{equation*}

\rm(ii) or $q < p \le 1$ and
\begin{equation*}
    H_2= \bigg( \sum_{k=N}^M a_k \bigg(\sum_{i =k}^{M} a_i\bigg)^{\frac{q}{p-q}} \sup_{N \leq i\leq k} b_i^{\frac{qp}{p-q}} \bigg)^{\frac{p-q}{pq}} < \infty,
\end{equation*}

\rm(iii) or $1 < p $, $q < p$ and
\begin{equation*}
    H_3= \bigg( \sum_{k=N}^M a_k \bigg(\sum_{i = k}^M a_i\bigg)^{\frac{q}{p-q}} 	\bigg(\sum_{i= N}^{k} b_i^{\frac{p}{p-1}}\bigg)^{\frac{q(p-1)}{p-q}} \bigg)^{\frac{p-q}{pq}}< \infty,
\end{equation*}

\rm(iv) or $ 1< p \le q $ and
\begin{equation*}
    H_4=  \sup_{N\leq k \leq M} \bigg(\sum_{i = k}^{M} a_i\bigg)^{\frac{1}{q}} \bigg(\sum_{i= N}^{k} b_i^{\frac{p}{p-1}}\bigg)^{\frac{p-1}{p}} < \infty.
\end{equation*}
Moreover, if we denote by $C$ the best constant in~\eqref{E:disc.hardy}, then
\begin{equation*}
    C \approx
        \begin{cases}
            H_1 &\text{in the case \textup{(i)},}
                \\
            H_2 &\text{in the case \textup{(ii)},}
                \\
            H_3 &\text{in the case \textup{(iii)},}
                \\
            H_4 &\text{in the case \textup{(iv)}.}
        \end{cases}
\end{equation*}.
\end{thm}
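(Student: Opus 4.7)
The plan is to treat each of the four parameter regimes separately, combining a necessity argument (obtained through judiciously chosen test sequences) with a sufficiency argument (built from discrete analogues of classical Hardy-type tools). Two principal dichotomies drive the structure of the proof: $p>1$ versus $p\le 1$ decides whether Hölder's inequality or the $p$-subadditivity $(\sum x_ib_i)^p\le\sum x_i^pb_i^p$ is the right weapon on the sufficiency side, while $p\le q$ versus $q<p$ controls whether the resulting condition is of sup-type or sum-type.

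For necessity, I would plug explicit trial sequences into \eqref{E:disc.hardy}. The single-atom choice $x_i=\delta_{i,j}$ immediately yields $H_1\le C$ in case (i) after taking the supremum over $j$. In case (iv), inserting the classical Bliss-type extremal $x_i=b_i^{1/(p-1)}\chi_{\{i\le j\}}$ and rearranging yields $H_4\le C$. For cases (ii) and (iii) one uses analogous weighted extremal sequences, now involving additional power factors of $\sum_{k\ge j}a_k$, calibrated so that after simplification via the identities \eqref{dec.sum-sum}--\eqref{inc.sup-sum} the exponents of the resulting expression match the powers $pq/(p-q)$ and $q/(p-q)$ appearing in $H_2$ and $H_3$.

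For the sufficiency in the regime $p>1$ (cases (iii) and (iv)), the standard Hardy--Bliss approach applies: by Hölder with exponents $p$ and $p/(p-1)$, together with a telescoping auxiliary weight of the form $\phi_i=B_i^{-\varepsilon}$ with $B_i=\sum_{j\le i}b_j^{p/(p-1)}$, one controls $\sum_{i\le k}x_ib_i$ by a product involving $(\sum x_i^p)^{1/p}$ and a power of $B_k$. Substituting back, case (iv) reduces directly to $H_4<\infty$, while case (iii) requires one further application of Theorem~\ref{T:disc.lp.emb.} with exponent $p/q$ to produce the summability condition $H_3$. In the regime $p\le 1$ (cases (i) and (ii)), the $p$-subadditivity $(\sum_{i\le k}x_ib_i)^p\le\sum_{i\le k}x_i^p b_i^p$ reduces the original inequality to a Hardy inequality of type $\ell^1\to\ell^{q/p}$ for the transformed sequences $y_i=x_i^p$ and $c_i=b_i^p$. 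When $q/p\ge 1$ (case (i)), Minkowski's inequality in $\ell^{q/p}(a)$ reduces matters to the condition $\sup_j c_j\bigl(\sum_{k\ge j}a_k\bigr)^{p/q}<\infty$, which is equivalent to $H_1$. When $q/p<1$ (case (ii)), Theorem~\ref{T:disc.lp.emb.} combined with appropriate grouping and the power rule \eqref{power-rule for tails} yields the desired sum condition, with the inner supremum in $H_2$ emerging from the discrete identity \eqref{dec.sup-sup}.

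I expect the main obstacle to lie in the sufficiency proof for case (ii). The direct combination of subadditivity and the reduced Hardy inequality naturally produces a sum-type condition built from $c_i=b_i^p$ weights, whereas Grosse-Erdmann's cleaner $H_2$ carries an inner $\sup_{i\le k}b_i^{pq/(p-q)}$. Bridging the two requires careful manipulation using the monotonicity equivalences \eqref{dec.sum-sum}--\eqref{inc.sup-sum} together with the power rule \eqref{power-rule for tails}, in order to repackage power-weighted sums as power-weighted suprema (and conversely). A mirroring issue appears on the necessity side: the extremal test sequence must be tuned so that both the outer Hardy-type sum and the inner $\sup_{i\le k}b_i$ inside $H_2$ are saturated simultaneously, which again hinges on the same power-rule manipulation.
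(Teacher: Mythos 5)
The paper does not prove this theorem; it quotes it as a known result from Bennett \cite[Theorem~1]{Be:91} and Grosse-Erdmann \cite[Theorem~9.2]{Gr:98}, with the latter credited for replacing Bennett's complicated condition in the regime $0<q<p<1$ by the cleaner $H_2$. There is therefore no in-paper argument against which your attempt can be literally compared, so I assess it on its own merits.

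Your template (subadditivity for $p\le1$, Hölder with a telescoping auxiliary weight for $p>1$; Minkowski versus a Landau-type diagonal reduction depending on whether $p\le q$ or $q<p$; atomic and Bliss-type test sequences for necessity) is the right skeleton and settles cases (i) and (iv) after routine bookkeeping, and case (iii) can be closed along those lines once the auxiliary weight is interlaced with the tail sums $\sum_{i\ge k}a_i$. The genuine gap is in case (ii), the regime $q<p\le1$, which you correctly single out as the obstruction. The subadditivity reduction converts the inequality into an $\ell^1\to\ell^{q/p}$ Hardy inequality with $q/p<1$ for the sequences $y_i=x_i^p$, $c_i=b_i^p$, but your proposed closure --- Theorem~\ref{T:disc.lp.emb.} together with grouping and the power rule --- does not apply: Theorem~\ref{T:disc.lp.emb.} characterizes only the \emph{diagonal} embedding $\ell^p(w)\hookrightarrow\ell^q(v)$, not the cumulative-sum operator, so the $\ell^1\to\ell^s$ Hardy inequality with $s<1$ is precisely the nontrivial sub-instance of the theorem you are trying to prove, not a consequence of something already at hand. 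Moreover, $\delta$-testing in this sub-case only produces the sup condition $\sup_k\bigl(\sum_{i\ge k}a_i\bigr)^{1/q}b_k<\infty$, which is strictly weaker than $H_2$, so no amount of power-rule rearrangement of a single sup-type estimate can upgrade sufficiency to the sum condition; a dedicated blockwise or monotone-envelope construction (Bennett's or Grosse-Erdmann's) is what is actually required, and your sketch neither supplies nor names such an argument.
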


Finally, we will need certain continuous analogues of some of the mentioned discrete results. We first recall that, for $0 < r \le 1$, $a,b\in[0,\infty]$, $a<b$, and $v$ a weight on $(a,b)$, one has
\begin{equation}\label{A(a,b)}
    V_r(a, b) = \sup_{f\in  \mathcal{M}^+(a, b)} \frac{\bigg(\int_{a}^{b} f^r v \bigg)^{\frac{1}{r}}} {\int_{a}^{b} f}.
\end{equation}

We round this section off with a modified version of the classical weighted Hardy inequality. For the proof, see e.g.~\cite{Ku-Pe-Sa:17} or~\cite{Si-St:96}.

\begin{thm}\label{T:cont.hardy}
Let $0 < r \leq 1 $, $0 < q < \infty$, $a,b\in[0,\infty]$, $a<b$, and assume that $u$ and $v$ are weight functions on $(a, b)$. Denote
\begin{equation}\label{B(a,b)}
    B(a, b) = \sup_{h\in  \mathcal{M}^+(a,b)} \frac{\bigg(\int_a^b \bigg(\int_a^t h^r v \bigg)^{\frac{q}{r}} u(t) dt \bigg)^{\frac{1}{q}}}{\int_a^b  h}.
\end{equation}

{\rm (i)} If $ 1 \leq q$, then
\begin{equation*}
    B(a,b) \approx  \esup_{t \in (a,b)} \bigg(\int_t^{b} u \bigg)^{\frac{1}{q}} V_r(a,t).
\end{equation*}
	
{\rm (ii)} If $ q < 1 $, then
\begin{equation*}
    B(a,b) \approx  \bigg( \int_a^b \bigg(\int_t^{b} u \bigg)^{\frac{q}{1-q}} u(t) V_r(a,t)^{\frac{q}{1-q}} dt \bigg)^{\frac{1-q}{q}}.
\end{equation*}
Moreover, in each case, the equivalence constants are independent of $a$ and $b$.
\end{thm}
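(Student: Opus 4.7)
The plan is to establish, separately in each case, matching lower and upper bounds for the supremum $B(a,b)$. The variational formula \eqref{A(a,b)} for $V_r$ is the workhorse throughout.

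For the lower bound, fix $x\in(a,b)$ and $\varepsilon>0$. By \eqref{A(a,b)} one can pick $h_0\ge 0$ supported in $(a,x)$ with $\int_a^x h_0=1$ and $\big(\int_a^x h_0^r v\big)^{1/r}\ge(1-\varepsilon)V_r(a,x)$. Plugging $h_0$ into the ratio defining $B(a,b)$ and restricting the outer integral to $(x,b)$ immediately gives $B(a,b)\ge(1-\varepsilon)V_r(a,x)\big(\int_x^b u\big)^{1/q}$; letting $\varepsilon\to 0$ and supremizing over $x$ closes case (i). In case (ii) the same device is iterated along a sequence $\{x_k\}$ on which $\int_{x_k}^b u$ decreases geometrically: one combines near-extremal test functions supported on $(x_{k-1},x_k)$ with scalars $c_k$ chosen by a discrete Landau-type extremal computation (Theorem~\ref{T:disc.lp.emb.} with $q<p=1$), whose antidiscretization delivers the stated integral expression.

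For the upper bound in case (i) I would use a dyadic level-set decomposition adapted to the inner functional. Pick $\{t_k\}$ so that $\big(\int_a^{t_k}h^r v\big)^{1/r}=2^k$ and set $\lambda_k=2^k$. On $[t_k,t_{k+1})$ the integrand $(\int_a^t h^r v)^{q/r}$ is comparable to $\lambda_k^q$, so
\[
\int_a^b\Big(\int_a^t h^r v\Big)^{q/r}u(t)\,dt\ \lesssim\ \sum_k\lambda_k^q\int_{t_k}^b u.
\]
A reverse-Hölder application of \eqref{A(a,b)} on $(t_{k-1},t_k)$ gives $\lambda_k\lesssim V_r(a,t_k)\int_{t_{k-1}}^{t_k}h$, and the a priori bound $V_r(a,t_k)^q\int_{t_k}^b u\le A^q$ (with $A$ denoting the claimed RHS) then converts each summand into $A^q\big(\int_{t_{k-1}}^{t_k}h\big)^q$. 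For $q\ge 1$ the embedding $\ell^1\hookrightarrow\ell^q$ yields $\sum_k\big(\int_{t_{k-1}}^{t_k}h\big)^q\le\big(\int_a^b h\big)^q$, which completes case (i).

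Case (ii), where $q<1$, is more delicate: the preceding $\ell^1\hookrightarrow\ell^q$ step fails, and one must instead feed the sequence $\{\int_{t_{k-1}}^{t_k}h\}_k$ into the discrete Hardy inequality of Theorem~\ref{T:disc.hardy} (with $p=1$, $q<1$); antidiscretizing the resulting discrete condition produces exactly the integral $\big(\int_a^b(\int_t^b u)^{q/(1-q)}u(t)V_r(a,t)^{q/(1-q)}\,dt\big)^{(1-q)/q}$ appearing in (ii). The main obstacle in both cases is the sharp coupling of $V_r$ with the tail of $u$: a naive use of the pointwise estimate $(\int_a^t h^r v)^{1/r}\le V_r(a,t)\int_a^t h$ followed by the classical $p=1$ Hardy inequality only produces the strictly larger condition $\int_a^b V_r(a,t)^q u(t)\,dt$. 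The dyadic decomposition above keeps $V_r(a,t_k)$ and $\int_{t_k}^b u$ coupled at every scale, which is precisely what allows one to invoke the supremum constraint $A$ pointwise in $k$. The independence of the implicit constants from $a,b$ is automatic from the scale-invariant nature of the decomposition.
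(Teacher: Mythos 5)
The paper does not actually prove Theorem~\ref{T:cont.hardy}: immediately before the statement it says that the result is ``a modified version of the classical weighted Hardy inequality'' and refers the reader to \cite{Ku-Pe-Sa:17} and \cite{Si-St:96}. So any self-contained argument here is, by definition, a different route. A cleaner reduction than the one you take is available and is presumably what the citation has in mind: substituting $g=h^r v$ shows $B(a,b)^r$ equals the best constant of the \emph{ordinary} weighted Hardy inequality with exponents $\tilde p=1/r\ge 1$ on the right and $\tilde q=q/r$ on the left, with weights $u$ and $v^{-1/r}$; computing $(\int_a^t (v^{-1/r})^{1-\tilde p'})^{1/\tilde p'}$ gives $V_r(a,t)^r$, and the standard Muckenhoupt/Maz'ya conditions for $\tilde p\le\tilde q$ (i.e.\ $q\ge 1$) and $\tilde q<\tilde p$ (i.e.\ $q<1$) are exactly the two displayed formulas. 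No new discretization is needed.

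Your dyadic level-set argument is in the right spirit and the case (i) parts are fine, but the case (ii) steps contain a misidentified tool and an unjustified passage. After setting $a_k=\int_{t_{k-1}}^{t_k}h$ and using $\lambda_k\approx(\int_{t_{k-1}}^{t_k}h^rv)^{1/r}\lesssim V_r(a,t_k)a_k$, the discrete inequality you must verify is $\big(\sum_k V_r(a,t_k)^q\big(\int_{t_k}^b u\big)\,a_k^q\big)^{1/q}\lesssim A\sum_k a_k$, a \emph{diagonal} $\ell^1\to\ell^q$ embedding with no inner sum; the relevant result is therefore Theorem~\ref{T:disc.lp.emb.} (Landau, case (ii) with $p=1$), \emph{not} Theorem~\ref{T:disc.hardy}. (If one instead uses the coarser $\lambda_k\le V_r(a,t_k)\sum_{j\le k}a_j$ to manufacture an inner sum, the discrete Hardy condition collapses by \eqref{power-rule for tails} to $\big(\sum_kV_r(a,t_k)^q\int_{t_k}^{t_{k+1}}u\big)^{1/q}$, which is the ``naive'' condition you correctly identify as too large.) Likewise, the antidiscretization you assert as automatic is not: passing from $\sum_k V_r(a,t_k)^{q/(1-q)}(\int_{t_k}^{t_{k+1}}u)^{1/(1-q)}$ to $\int_a^b(\int_t^b u)^{q/(1-q)}uV_r(a,\cdot)^{q/(1-q)}$ requires the superadditivity $(\alpha+\beta)^{1/(1-q)}\ge\alpha^{1/(1-q)}+\beta^{1/(1-q)}$, and the lower bound requires \eqref{dec.sum-sum} to trade $V_r(x_{k-1},x_k)$ (what your test function actually produces) for $V_r(a,x_k)$ along the strongly decreasing sequence $\{(\int_{x_k}^bu)^{1/(1-q)}\}$. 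These steps are all fixable but are currently missing, so as written the case (ii) proof has a gap; the change-of-variables reduction above avoids all of it.
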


\section{Discretization}\label{S:DisChar}

In this section, we formulate discrete characterizations of \eqref{main}. To this end, first we need to describe the fundamental elements of the discretization method.

\begin{defi}
Let $M \in \mathbb{Z}\cup \{+\infty\}$ and $\{x_k\}_{k=-\infty}^M$ be a strictly increasing sequence in $[a,b]$. We say that $\{x_k\}_{k=-\infty}^M$ is a covering sequence if $\lim_{k\rightarrow -\infty} x_k = 0 $ and, either $M = +\infty$ and $\lim_{k\rightarrow \infty} x_k = \infty$ or $M\in \mathbb{Z}$ and $x_M = \infty$.
\end{defi}
Denote by
$$
W(t) = \int_0^t w(s) ds, \quad t \in [0,\infty].
$$
\begin{defi}
Let $M \in \mathbb{Z}\cup \{+\infty\}$ and $w$ be a weight on $(0,\infty)$. We say that a covering sequence $\{x_k\}_{k=-\infty}^M \subset [0,\infty]$ is a discretizing sequence of $W$ if it satisfies $W(x_k) \approx 2^k$, $k \leq M$.
\end{defi}

We shall need the following result from~\cite{Go-Mi-Pi-Tu-Un:21}.

\begin{lem}\label{L:int-sup-equiv}
Let $\alpha \geq 0$. Assume that $w$ is a weight on $(0, \infty)$, $\{x_k\}_{k=-\infty}^{M}$ is a discretizing sequence of $W$. If $h$ is a non-negative non-increasing function on $(0, \infty)$, then
\begin{equation*}
    \int_0^{\infty} W(x)^{\alpha} w(x) h(x)  dx \approx \sum_{k=-\infty}^{M-1} 2^{k(\alpha +1)} h(x_k).
\end{equation*}
\end{lem}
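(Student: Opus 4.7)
The plan is to partition $(0,\infty)$ along the discretizing sequence and estimate the integrand on each piece using monotonicity of $h$. Since $\{x_k\}_{k=-\infty}^M$ covers $(0,\infty)$ in the sense that the intervals $(x_k, x_{k+1})$, $k < M$, partition $(0,\infty)$, one has
\begin{equation*}
    \int_0^{\infty} W(x)^{\alpha} w(x) h(x)\,dx = \sum_{k=-\infty}^{M-1} \int_{x_k}^{x_{k+1}} W(x)^{\alpha} w(x) h(x)\,dx.
\end{equation*}
On each subinterval I exploit the fact that $h$ is non-increasing, so $h(x_{k+1})\le h(x)\le h(x_k)$ for $x\in(x_k,x_{k+1})$.

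The key elementary computation is that, for every $k$,
\begin{equation*}
    \int_{x_k}^{x_{k+1}} W(x)^{\alpha} w(x)\,dx = \frac{W(x_{k+1})^{\alpha+1}-W(x_k)^{\alpha+1}}{\alpha+1} \approx 2^{(k+1)(\alpha+1)} - 2^{k(\alpha+1)} \approx 2^{k(\alpha+1)},
\end{equation*}
(with the obvious modification that the difference $W(x_{k+1})-W(x_k)$ appears when $\alpha=0$), where the equivalences use precisely the defining property $W(x_k)\approx 2^k$ of a discretizing sequence. Combined with the monotonicity estimate, I immediately get the two-sided bound
\begin{equation*}
    2^{k(\alpha+1)} h(x_{k+1}) \lesssim \int_{x_k}^{x_{k+1}} W(x)^{\alpha} w(x) h(x)\,dx \lesssim 2^{k(\alpha+1)} h(x_k),
\end{equation*}
with constants independent of $k$.

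Summing the upper estimate over $k\in\{-\infty,\dots,M-1\}$ yields the $\lesssim$ direction at once. For the reverse direction, summing the lower estimate gives
\begin{equation*}
    \int_0^{\infty} W(x)^{\alpha} w(x) h(x)\,dx \gtrsim \sum_{k=-\infty}^{M-1} 2^{k(\alpha+1)} h(x_{k+1}),
\end{equation*}
and after the substitution $j=k+1$ the right-hand side equals $2^{-(\alpha+1)}\sum_{j} 2^{j(\alpha+1)} h(x_j)$, where the sum runs over $j\le M$. Dropping the (non-negative) boundary term $2^{M(\alpha+1)}h(x_M)$ in the case $M\in\mathbb{Z}$ leaves exactly $2^{-(\alpha+1)}\sum_{j=-\infty}^{M-1} 2^{j(\alpha+1)} h(x_j)$, establishing the matching lower bound.

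The only part that requires any thought at all is the index shift at the top of the sum: when $M$ is finite, $x_M=\infty$ and the shift produces a stray term $h(x_M)=\lim_{t\to\infty}h(t)$ which is not present on the right-hand side of the claimed equivalence. I expect this to be the only genuine obstacle, and it is disposed of cleanly by simply discarding that non-negative term from the lower bound; when $M=+\infty$ no such boundary term occurs and the shift is completely transparent.
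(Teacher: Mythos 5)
The paper does not prove this lemma; it quotes it from \cite{Go-Mi-Pi-Tu-Un:21}. So there is no in-paper argument to compare against, and I can only assess your proposal on its own.

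The upper-bound direction is fine: $\int_{x_k}^{x_{k+1}} W^{\alpha}w \le W(x_{k+1})^{\alpha+1}/(\alpha+1)\lesssim 2^{(k+1)(\alpha+1)}\approx 2^{k(\alpha+1)}$, and summing against $h(x_k)$ gives $\int_0^{\infty}W^{\alpha}wh\lesssim \sum 2^{k(\alpha+1)}h(x_k)$. The index shift at the end of your lower bound is also handled correctly.

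The gap is in the single-interval lower bound: the assertion
\begin{equation*}
W(x_{k+1})^{\alpha+1}-W(x_k)^{\alpha+1}\approx 2^{(k+1)(\alpha+1)}-2^{k(\alpha+1)}\approx 2^{k(\alpha+1)}
\end{equation*}
uniformly in $k$ does not follow from the defining property $W(x_k)\approx 2^k$. Differences of pairwise equivalent quantities need not be equivalent: if the two-sided constant in $W(x_k)\approx 2^k$ is $c$, one can only infer $W(x_{k+1})/W(x_k)\ge 2/c^2$, which is bounded away from $1$ exactly when $c<\sqrt 2$. For $c\ge\sqrt 2$ nothing prevents $W(x_{k+1})-W(x_k)$ from being arbitrarily small relative to $2^k$ at isolated indices, so $\int_{x_k}^{x_{k+1}}W^{\alpha}w$ need not be $\gtrsim 2^{k(\alpha+1)}$, and your chain $\int_0^{\infty}W^{\alpha}wh\gtrsim \sum 2^{k(\alpha+1)}h(x_{k+1})$ does not go through. (It would go through if the definition guaranteed $W(x_k)=2^k$, as in some constructions, but the paper's definition only requires $\approx$.)

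The estimate that \emph{is} always available is the cumulative one, $\int_0^{x_k}W^{\alpha}w = W(x_k)^{\alpha+1}/(\alpha+1)\approx 2^{k(\alpha+1)}$, and the lower bound should be obtained from it by summation by parts rather than interval-by-interval. Sketch: since $2^{k(\alpha+1)}-2^{(k-1)(\alpha+1)}=(1-2^{-(\alpha+1)})2^{k(\alpha+1)}$, Abel summation and $h(x_N)2^{(N-1)(\alpha+1)}\lesssim \int_0^{x_N}W^{\alpha}wh\to 0$ as $N\to-\infty$ give
\begin{equation*}
\sum_{k\le M-1} 2^{k(\alpha+1)}h(x_k)\approx h(x_{M-1})2^{(M-1)(\alpha+1)}+\sum_{k\le M-2}\bigl(h(x_k)-h(x_{k+1})\bigr)2^{k(\alpha+1)}.
\end{equation*}
Replace each $2^{k(\alpha+1)}$ by $\lesssim\int_0^{x_k}W^{\alpha}w$, interchange sums and telescope the nonnegative increments $h(x_k)-h(x_{k+1})$; both resulting terms are bounded by $\int_0^{\infty}W^{\alpha}wh$ because $h(x_{i+1})\int_{x_i}^{x_{i+1}}W^{\alpha}w\le\int_{x_i}^{x_{i+1}}W^{\alpha}wh$. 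This proves the missing direction without any assumption on the ratios $W(x_{k+1})/W(x_k)$.
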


Next we show that \eqref{main} is equivalent to two more manageable discrete inequalities.

\begin{lem}\label{L:equiv. ineq.}
Let $0 < r \leq 1$, $0 < p, q < \infty$ and let $u,v,w$ be weights on $(0,\infty)$. Then \eqref{main} holds for all $f\in\Mpl$ if and only if both
\begin{equation}\label{Ak inequality}
\bigg( \sum_{k=-\infty}^{M-1} \bigg( \sum_{i=-\infty}^{k} 2^{-i\frac{r}{p}} V_r(x_{i-1},x_i)^r a_i^r \bigg)^{\frac{q}{r}} \int_{x_k}^{x_{k+1}} u \bigg)^{\frac{1}{q}} \leq C' \bigg(\sum_{k=-\infty}^{M-1} a_k^p \bigg)^{\frac{1}{p}}
\end{equation}	
and
\begin{equation}\label{Bk inequality}
\bigg( \sum_{k=-\infty}^{M-1} 2^{-k\frac{q}{p}} B(x_k, x_{k+1})^q a_k^q \bigg)^{\frac{1}{q}} \leq C'' \bigg(\sum_{k=-\infty}^{M-1} a_k^p \bigg)^{\frac{1}{p}}
\end{equation}	
hold for every sequence of non-negative numbers $\{a_k\}_{k=-\infty}^{M-1}$, where $\{x_k\}_{k=-\infty}^{M}$ is the discretizing sequence of $W$, and $V_r(x_{k-1}, x_k)$ and $B(x_k, x_{k+1})$, $k \leq M-1$, are defined in \eqref{Vr} and \eqref{B(a,b)}, respectively. Moreover the best constants $C$, $C'$ and $C''$ in \eqref{main}, \eqref{Ak inequality} and \eqref{Bk inequality}, respectively, satisfy $C \approx C' + C''$.
\end{lem}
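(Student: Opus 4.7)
My plan is to use the discretizing sequence $\{x_k\}_{k=-\infty}^M$ of $W$ to convert both sides of \eqref{main} into expressions adapted to the discrete inequalities \eqref{Ak inequality} and \eqref{Bk inequality}. On the right-hand side, Lemma~\ref{L:int-sup-equiv} with $\alpha=0$, applied to the non-increasing tail $F(t):=\int_t^\infty f$, gives $\int_0^\infty F^p w \approx \sum_k 2^k F(x_k)^p$; on the left-hand side, for $t\in(x_k,x_{k+1})$ I decompose $\int_0^t f^r v = \int_0^{x_k} f^r v + \int_{x_k}^t f^r v$. The global contribution will be governed by the telescoping inequality \eqref{Ak inequality} through the extremal characterization of $V_r(x_{i-1},x_i)$ in \eqref{A(a,b)}, while the local contribution will be governed by \eqref{Bk inequality} through the definition of $B(x_k,x_{k+1})$ in \eqref{B(a,b)}.

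\textbf{Sufficiency.} Assuming both discrete inequalities, I would fix $f\in\mathcal{M}^+$ and apply $(a+b)^{q/r}\lesssim a^{q/r}+b^{q/r}$ to split the left-hand side of \eqref{main} into a global part $\sum_k\bigl(\int_0^{x_k}f^r v\bigr)^{q/r}\int_{x_k}^{x_{k+1}}u$ and a local part $\sum_k\int_{x_k}^{x_{k+1}}\bigl(\int_{x_k}^t f^r v\bigr)^{q/r}u(t)\,dt$. The defining bound $\int_{x_{i-1}}^{x_i}f^r v\le V_r(x_{i-1},x_i)^r\bigl(\int_{x_{i-1}}^{x_i}f\bigr)^r$ rewrites the global part as the left-hand side of \eqref{Ak inequality} for the test sequence $a_i:=2^{i/p}\int_{x_{i-1}}^{x_i}f$, yielding a bound by $C'\bigl(\sum_i a_i^p\bigr)^{1/p}$. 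Likewise the definition of $B(x_k,x_{k+1})$ rewrites the local part as the left-hand side of \eqref{Bk inequality} for $a_k:=2^{k/p}\int_{x_k}^{x_{k+1}}f$, yielding a bound by $C''\bigl(\sum_k a_k^p\bigr)^{1/p}$. In both cases $a_k^p$ is dominated by $2^k F(x_{k-1})^p$ or $2^k F(x_k)^p$ respectively, so Lemma~\ref{L:int-sup-equiv} delivers $\sum_k a_k^p \lesssim \int_0^\infty F^p w$, completing the bound $C\lesssim C'+C''$.

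\textbf{Necessity and main obstacle.} For the converse, given a nonnegative sequence $\{a_k\}$, I would construct two separate extremal test functions: for \eqref{Ak inequality} I pick near-extremizers $g_i\in\mathcal{M}^+(x_{i-1},x_i)$ of the supremum in~\eqref{A(a,b)} normalized by $\int_{x_{i-1}}^{x_i}g_i=1$ and set $f:=\sum_i \bigl(2^{-i/p}a_i\bigr)\,g_i\mathbf{1}_{(x_{i-1},x_i)}$; for \eqref{Bk inequality} I replace the $g_i$ by near-extremizers of the supremum in~\eqref{B(a,b)} supported on $(x_k,x_{k+1})$. In each case the monotonicity $\int_0^t f^r v\ge\int_0^{x_k}f^r v$ (respectively, the local analogue) combined with the extremal property immediately forces the left-hand side of \eqref{main} to dominate the left-hand side of the corresponding discrete inequality, up to absolute constants. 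The main obstacle is controlling the right-hand side of \eqref{main} by $\bigl(\sum_k a_k^p\bigr)^{1/p}$: since the pieces of $f$ have disjoint supports, $F(x_k)$ reduces to a tail of the form $\sum_{i>k}2^{-i/p}a_i$, so Lemma~\ref{L:int-sup-equiv} converts the task into the purely discrete inequality
\[
\sum_k 2^k\Bigl(\sum_{i>k} 2^{-i/p}a_i\Bigr)^p \lesssim \sum_k a_k^p.
\]
When $0<p\le 1$ this follows from sub-additivity of $t\mapsto t^p$ followed by Fubini, and when $p>1$ it follows from the power-rule~\eqref{inc.sum-sum} applied with the strongly increasing weights $\{2^{k/p}\}$. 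Assembling both directions produces the claimed equivalence $C\approx C'+C''$.
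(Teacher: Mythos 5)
Your proposal is correct and follows essentially the same route as the paper: both rely on Lemma~\ref{L:int-sup-equiv} (with $\alpha=0$) to convert the right-hand side of \eqref{main} to the discrete form $\sum_k 2^k F(x_k)^p$, split the left-hand side at each $x_k$ into a ``global'' tail $\int_0^{x_k}f^r v$ and a ``local'' part $\int_{x_k}^{t}f^r v$, and use the extremal function characterizations \eqref{A(a,b)} of $V_r$ and \eqref{B(a,b)} of $B$ to pass between the continuous and discrete inequalities. The paper organizes this by first establishing \eqref{Rhs-main} as a two-sided equivalence via \eqref{inc.sum-sum} and then isolating two intermediate continuous-discrete inequalities \eqref{disc.main.1}--\eqref{disc.main.2}, each shown equivalent to \eqref{Ak inequality} and \eqref{Bk inequality} respectively, whereas you fold the \eqref{inc.sum-sum} step directly into the necessity part (and use the trivial one-sided domination $a_k^p\le 2^kF(x_{k-1})^p$ for sufficiency). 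These are cosmetic reorganizations of the same argument, and your handling of the remaining discrete power-sum estimate (subadditivity for $p\le1$, \eqref{inc.sum-sum} for $p>1$) is sound.
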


\begin{proof}
Let $\{x_k\}_{k=-\infty}^{M}$ be the discretizing sequence of $W$. By Lemma~\ref{L:int-sup-equiv} (with $\alpha = 0$) combined with \eqref{inc.sum-sum} for $\beta =p$, we get
\begin{equation}\label{Rhs-main}
\RHS{\eqref{main}} \approx \bigg(\sum_{k=-\infty}^{M-1} 2^k \bigg(\int_{x_k}^{x_{k+1}} f\bigg)^p\bigg)^{\frac{1}{p}}.
\end{equation}
On the other hand, we have
\begin{align*}
\LHS\eqref{main} &= \bigg(\sum_{k=-\infty}^{M-1} \int_{x_k}^{x_{k+1}} \bigg( \int_0^t f^r v \bigg)^{\frac{q}{r}} u(t) dt \bigg)^{\frac{1}{q}} \\
& \approx \bigg( \sum_{k=-\infty}^{M-1} \bigg( \int_0^{x_k} f^r v\bigg)^{\frac{q}{r}} \int_{x_k}^{x_{k+1}} u(t) dt \bigg)^{\frac{1}{q}} + \bigg(\sum_{k=-\infty}^{M-1} \int_{x_k}^{x_{k+1}} \bigg( \int_{x_k}^t f^r v \bigg)^{\frac{q}{r}} u(t) dt 	\bigg)^{\frac{1}{q}}.
\end{align*}
Therefore, \eqref{Rhs-main} shows that inequality \eqref{main} holds for all non-negative measurable functions $f$ on $(0,\infty)$ if and only if both
\begin{equation}\label{disc.main.1}
\bigg( \sum_{k=-\infty}^{M-1}  \bigg( \int_0^{x_k} f^r v \bigg)^{\frac{q}{r}} \int_{x_k}^{x_{k+1}} u(t) dt \bigg)^{\frac{1}{q}} \leq \mathfrak{C'} \bigg( \sum_{k=-\infty}^{M-1}  2^k \bigg( 	\int_{x_k}^{x_{k+1}} f \bigg)^p \bigg)^{\frac{1}{p}}
\end{equation}
and
\begin{equation}\label{disc.main.2}
\bigg( \sum_{k=-\infty}^{M-1} \int_{x_k}^{x_{k+1}} \bigg( \int_{x_k}^t f^r v \bigg)^{\frac{q}{r}} u(t) dt \bigg)^{\frac{1}{q}} \leq \mathfrak{C''} \bigg( \sum_{k=-\infty}^{M-1}  2^k \bigg( \int_{x_k}^{x_{k+1}} f \bigg)^p \bigg)^{\frac{1}{p}}
\end{equation}
hold for all non-negative measurable functions $f$ on $(0,\infty)$. Moreover, the best constants $C$, $\mathfrak{C'}$ and $\mathfrak{C''}$ in \eqref{main}, \eqref{disc.main.1} and \eqref{disc.main.2}, respectively, satisfy $C \approx \mathfrak{C'} + \mathfrak{C''}$.

We will now show that \eqref{disc.main.1} is equivalent to \eqref{Ak inequality}. Assume first that \eqref{disc.main.1} holds. By \eqref{A(a,b)}, there exist functions $h_k\in\Mpl$, $k \leq M-1$, on $(0,\infty)$ such that 	
\begin{equation*}
\supp h_k \subset [x_{k-1}, x_k], \quad \int_{x_{k-1}}^{x_{k}} h_k = 1 \quad \text{and} \quad \bigg( \int_{x_{k-1}}^{x_k} h_k^r v \bigg)^{\frac{1}{r}} \geq \frac{1}{2} V_r(x_{k-1}, x_k).
\end{equation*}
Fix a sequence of non-negative numbers $\{a_m\}_{m=-\infty}^{M-1}$ and set $f=\sum_{m=-\infty}^{M-1} 2^{-\frac{m}{p}} a_m h_m$. Then
\begin{align*}
\LHS\eqref{disc.main.1}  &=\bigg( \sum_{k =-\infty}^{M-1} \bigg( \sum_{i=-\infty}^k 2^{-i\frac{r}{p}} a_i^r \int_{x_{i-1}}^{x_i} h_i^r v \bigg)^{\frac{q}{r}} \int_{x_k}^{x_{k+1}} u \bigg)^{\frac{1}{q}} \\
& \gtrsim \bigg( \sum_{k =-\infty}^{M-1}  \bigg( \sum_{i=-\infty}^k  2^{-i\frac{r}{p}} a_i^r  V_r(x_{i-1}, x_i)^r \bigg)^{\frac{q}{r}} \int_{x_k}^{x_{k+1}} u \bigg)^{\frac{1}{q}}.
\end{align*}
On the other hand,
\begin{align*}
\RHS\eqref{disc.main.1} \approx \mathfrak{C'} \bigg( \sum_{k =-\infty}^{M-1} a_k^p \bigg( 	\int_{x_{k-1}}^{x_k} h_k \bigg)^p \bigg)^{\frac{1}{p}} = \mathfrak{C'} \bigg( \sum_{k =-\infty}^{M-1} a_k^p  \bigg)^{\frac{1}{p}}.
\end{align*}
Altogether, this establishes \eqref{Ak inequality} with $C' \lesssim \mathfrak{C'}$. Conversely, assume that \eqref{Ak inequality} holds. Since
\begin{equation*}
V_r(x_{k-1}, x_k) \geq \bigg( \int_{x_{k-1}}^{x_k} h^r v \bigg)^{\frac{1}{r}} \bigg(\int_{x_{k-1}}^{x_{k}} h 	\bigg)^{-1}, \quad k \leq M-1,
\end{equation*}
for all $h\in\Mpl$, \eqref{Ak inequality} yields
\begin{equation*}
\bigg( \sum_{k =-\infty}^{M-1} \bigg( \sum_{i=-\infty}^{k} 2^{-i\frac{r}{p}} \bigg( 	\int_{x_{i-1}}^{x_i} h^r v \bigg) \bigg(\int_{x_{i-1}}^{x_i} h \bigg)^{-r} a_i^r 	\bigg)^{\frac{q}{r}} \int_{x_k}^{x_{k+1}} u \bigg)^{\frac{1}{q}} \leq C' \bigg(\sum_{k =-\infty}^{M-1} a_k^p \bigg)^{\frac{1}{p}}.
\end{equation*}
On taking $a_k = 2^{\frac{k}{p}} \int_{x_{k-1}}^{x_k} h$, we get
\begin{equation*}
\bigg( \sum_{k =-\infty}^{M-1} \bigg( \sum_{i=-\infty}^{k} \int_{x_{i-1}}^{x_i} h^r v  \bigg)^{\frac{q}{r}} \int_{x_k}^{x_{k+1}} u \bigg)^{\frac{1}{q}} \leq C' \bigg(\sum_{k =-\infty}^{M-1} 2^k \bigg(\int_{x_{k-1}}^{x_k} h\bigg)^p \bigg)^{\frac{1}{p}}.
\end{equation*}
Therefore, \eqref{disc.main.1} holds with $\mathfrak{C'} \lesssim C'$. Hence, \eqref{disc.main.1} is equivalent to \eqref{Ak inequality} and $C' \approx \mathfrak{C'}$.

Now we will show that \eqref{disc.main.2} is equivalent to \eqref{Bk inequality}. Suppose that \eqref{disc.main.2} holds. By the definition of $B(x_k, x_{k+1})$, there exist functions $g_k\in\Mpl$, $k \leq M-1$, on $(0,\infty)$ such that
\begin{equation*}
\supp g_k \subset [x_k,x_{k+1}], \quad \int_{x_k}^{x_{k+1}} g_k =1 \quad \text{and} \quad \bigg( \int_{x_k}^{x_{k+1}} \bigg( \int_{x_k}^t g_k^r v\bigg)^\frac{q}{r} u(t) dt \bigg)^{\frac{1}{q}} \geq \frac{1}{2}  B(x_k, x_{k+1}).
\end{equation*}
Testing \eqref{disc.main.2} with $f= \sum_{m=-\infty}^{M-1} 2^{-\frac{m}{p}} a_m g_m$, where $\{a_m\}_{m=-\infty}^{M-1}$ is a sequence of non-negative numbers, we get \eqref{Bk inequality} with $C'' \lesssim \mathfrak{C''}$. Conversely, assume that \eqref{Bk inequality} holds. Since
\begin{equation*}
B(x_k, x_{k+1}) \geq  \bigg( \int_{x_k}^{x_{k+1}} \bigg( \int_{x_k}^t g^r v\bigg)^\frac{q}{r} u(t) dt \bigg)^{\frac{1}{q}} \bigg(\int_{x_k}^{x_{k+1}} g\bigg)^{-1}, \quad k \leq M-1,
\end{equation*}
for every $g\in\Mpl$, applying the latter inequality and putting $a_k = 2^{\frac{k}{p}} \int_{x_k}^{x_{k+1}} g$ in \eqref{Bk inequality}, we obtain \eqref{disc.main.2}. Moreover, $\mathfrak{C''} \lesssim C''$. Altogether, the claim follows.
\end{proof}

Now we are in position to state and prove the main result of this section, namely a discrete characterization of inequality \eqref{main}.

\begin{thm}\label{C:discrete solutions}
Let $0 < r \leq 1$, $0 < p, q < \infty$, let $u,v,w$ be weights on $(0,\infty)$ and let $\{x_k\}_{k=-\infty}^{M}$ be a discretizing sequence of $W$.

\rm{(i)} If  $p \leq r \leq 1 \leq q$, then \eqref{main} holds for all $f\in\Mpl$ if and only if $\max\{A_1, B_1\} < \infty$, where
\begin{equation*}
    A_1 = \sup_{k \leq M-1} 2^{-\frac{k}{p}} V_r(x_{k-1}, x_k) \bigg(\int_{x_k}^{\infty} u\bigg)^{\frac{1}{q}}
\end{equation*}
and
\begin{equation*}
    B_1 =  \sup_{k \leq M-1} 2^{-\frac{k}{p}} \sup_{t \in (x_k, x_{k+1}) } \bigg( \int_t^{x_{k+1}} u \bigg)^{\frac{1}{q}}  V_r(x_{k}, t).
\end{equation*}
Moreover, the best constant $C$ in \eqref{main} satisfies $C \approx A_1 + B_1$.

\rm{(ii)}  If  $p\leq q <1$, and $p \leq r\leq 1$, then \eqref{main} holds for all $f\in\Mpl$  if and only if $\max\{A_1, B_2\} < \infty$, where
\begin{equation*}
    B_2 =  \sup_{k\leq M-1} 2^{-\frac{k}{p}} \bigg( \int_{x_k}^{x_{k+1}} \bigg( \int_t^{x_{k+1}} u \bigg)^{\frac{q}{1-q}} u(t) V_r(x_k, t)^{\frac{q}{1-q}}  dt \bigg)^{\frac{1-q}{q}}.
\end{equation*}
Moreover, the best constant $C$ in \eqref{main} satisfies $C \approx A_1 + B_2$.
	
\rm{(iii)} If $r < p \leq q$, and $r \leq 1 \leq q$, then \eqref{main} holds for all $f\in\Mpl$  if and only if $\max\{A_2, B_1\} < \infty$, where
\begin{equation*}
    A_2 =  \sup_{k\leq M-1} \bigg(\int_{x_k}^{\infty} u\bigg)^{\frac{1}{q}} \bigg(\sum_{i=-\infty}^k 2^{-i\frac{r}{p-r}} V_r(x_{i-1}, x_i)^{\frac{pr}{p-r}} \bigg)^{\frac{p-r}{pr}}.
\end{equation*}
Moreover, the best constant $C$ in \eqref{main} satisfies $C \approx A_2 + B_1$.

\rm{(iv)} If $r < p \leq q < 1$, then \eqref{main} holds for all $f\in\Mpl$  if and only if $\max\{A_2, B_2\}< \infty$. Moreover, the best constant $C$ in \eqref{main} satisfies $C \approx A_2 + B_2$.

\rm{(v)} If  $q < p \leq r \leq 1$, then \eqref{main} holds for all $f\in\Mpl$  if and only if $\max\{A_3, B_3\} < \infty$, where
\begin{equation*}
    A_3 = \bigg(\sum_{k=-\infty}^{M-1} \bigg(\int_{x_k}^{x_{k+1}} u\bigg) \bigg(\int_{x_k}^{\infty} u\bigg)^{\frac{q}{p-q}} \sup_{i\leq k}  2^{-i\frac{q}{p-q}} V_r(x_{i-1},x_i)^{\frac{pq}{p-q}} \bigg)^{\frac{p-q}{pq}}
\end{equation*}
and
\begin{equation*}
    B_3 =   \bigg(\sum_{k=-\infty}^{M-1} 2^{-k\frac{q}{p-q}} \bigg( \int_{x_k}^{x_{k+1}} \bigg( \int_t^{x_{k+1}} u \bigg)^{\frac{q}{1-q}} u(t) V_r(x_k,t)^{\frac{q}{1-q}} dt \bigg)^{\frac{p(1-q)}{p-q}} \bigg)^{\frac{p-q}{pq}}.
\end{equation*}
Moreover, the best constant $C$ in \eqref{main} satisfies $C \approx A_3 + B_3$.
	
\rm{(vi)} If  $q < p$, $q < 1$, $r < p$ and $r \leq 1$, then \eqref{main} holds for all $f\in\Mpl$  if and only if $\max\{A_4, B_3\} < \infty$, where
\begin{equation*}
    A_4 =  \bigg(\sum_{k=-\infty}^{M-1} \bigg(\int_{x_k}^{x_{k+1}} u \bigg) \bigg(\int_{x_k}^{\infty} u \bigg)^{\frac{q}{p-q}} \bigg( \sum_{i=-\infty}^{k} 2^{-i\frac{r}{p-r}} V_r(x_{i-1}, x_i)^{\frac{pr}{p-r}} \bigg)^{\frac{q(p-r)}{r(p-q)}}  \bigg)^{\frac{p-q}{pq}}.
\end{equation*}
Moreover, the best constant $C$ in \eqref{main} satisfies $C \approx A_4 + B_3$.
	
\rm{(vii)} If  $r \leq 1 \leq q < p$, then \eqref{main} holds for all $f\in\Mpl$ if and only if $\max\{A_4, B_4\} < \infty$, where
\begin{equation*}
    B_4 =  \bigg( \sum_{k=-\infty}^{M-1} 2^{-k\frac{q}{p-q}} \sup_{t \in (x_k, x_{k+1})}
    \bigg(\int_t^{x_{k+1}} u \bigg)^{\frac{p}{p-q}} V_r(x_k,t)^{\frac{pq}{p-q}} \bigg)^{\frac{p-q}{pq}}.
\end{equation*}
Moreover, the best constant $C$ in \eqref{main} satisfies $C \approx A_4 + B_4$.
\end{thm}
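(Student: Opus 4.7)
The plan is to invoke Lemma~\ref{L:equiv. ineq.}, which reduces the validity of~\eqref{main} to the conjunction of the two discrete inequalities \eqref{Ak inequality} and \eqref{Bk inequality}, with the best constants satisfying $C\approx C'+C''$. I will then handle each of these two discrete inequalities separately. The $A_i$-type conditions will come entirely from \eqref{Ak inequality}, while the $B_j$-type conditions will come entirely from \eqref{Bk inequality}. Pairing the outcomes according to the sign pattern of $p,q,r,1$ produces exactly the seven cases in the statement.

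For the first inequality, I substitute $y_k=a_k^p$ and raise both sides to the power $r$; equivalently, I relabel with $\tilde p=p/r$ and $\tilde q=q/r$. This transforms~\eqref{Ak inequality} into an instance of the discrete Hardy inequality~\eqref{E:disc.hardy} with $b_k=2^{-k r/p}\,V_r(x_{k-1},x_k)^r$ and $a_k=\int_{x_k}^{x_{k+1}}u$. Theorem~\ref{T:disc.hardy} then supplies a necessary and sufficient condition in each of its four subcases, whose conversion via $\tilde p=p/r$, $\tilde q=q/r$ yields exactly:
\begin{itemize}
\item[(a)] the condition $A_1$ when $p\le r$ and $p\le q$ (Theorem~\ref{T:disc.hardy}(i), since then $\tilde p\le 1$ and $\tilde p\le\tilde q$);
\item[(b)] the condition $A_2$ when $r<p\le q$ (Theorem~\ref{T:disc.hardy}(iv), since $\tilde p>1$ and $\tilde p\le\tilde q$);
\item[(c)] the condition $A_3$ when $q<p\le r$ (Theorem~\ref{T:disc.hardy}(ii), since $\tilde q<\tilde p\le 1$);
\item[(d)] the condition $A_4$ when $r<p$ and $q<p$ (Theorem~\ref{T:disc.hardy}(iii), since $\tilde p>1$ and $\tilde q<\tilde p$).
\end{itemize}
For each case the exponents $\tilde q/(\tilde p-\tilde q)$, $\tilde p/(\tilde p-1)$, $\tilde p\tilde q/(\tilde p-\tilde q)$ simplify to the exponents $q/(p-q)$, $p/(p-r)$, $pq/(r(p-q))$ appearing in $A_1,\dots,A_4$; this is the only routine arithmetic step.

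For the second inequality I apply Theorem~\ref{T:disc.lp.emb.} directly to~\eqref{Bk inequality} with $w_k\equiv 1$ and $v_k=2^{-k/p}B(x_k,x_{k+1})$. When $p\le q$ the sharp condition reads $\sup_k 2^{-k/p}B(x_k,x_{k+1})<\infty$, and when $q<p$ it reads $\sum_k 2^{-kq/(p-q)}B(x_k,x_{k+1})^{pq/(p-q)}<\infty$. In each case I substitute the asymptotic evaluation of $B(x_k,x_{k+1})$ provided by Theorem~\ref{T:cont.hardy}, which has two branches according to whether $q\ge 1$ or $q<1$. Matching the four combinations yields $B_1$ (when $p\le q$, $q\ge 1$), $B_2$ (when $p\le q$, $q<1$), $B_4$ (when $q<p$, $q\ge 1$) and $B_3$ (when $q<p$, $q<1$).

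Finally, I combine the two lists. The seven cases of the theorem correspond to the seven admissible sign patterns among the constraints $p\lessgtr r$, $p\lessgtr q$, $q\lessgtr 1$, $r\le 1$, and in each one the pair $(A_i,B_j)$ produced by the two steps matches the pair displayed in the statement; the equivalence $C\approx C'+C''\approx A_i+B_j$ follows. No step is genuinely hard: the only real work is bookkeeping the exponent algebra when translating Theorem~\ref{T:disc.hardy} back to the original parameters, and checking that the seven case hypotheses in the theorem partition the domain of $(p,q,r)$ exactly the way Theorems~\ref{T:disc.hardy} and~\ref{T:cont.hardy} do. The main conceptual obstacle, so to speak, is the verification in subcases (v) and (vi) that the supremum structure $\sup_{i\le k}$ produced by Theorem~\ref{T:disc.hardy}(ii) in condition $A_3$ is not accidentally equivalent to the sum structure of $A_4$; this is handled by observing that $p\le r$ versus $r<p$ is exactly what separates the two regimes.
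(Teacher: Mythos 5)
Your proposal matches the paper's proof exactly: reduce via Lemma~\ref{L:equiv. ineq.} to the pair \eqref{Ak inequality}, \eqref{Bk inequality}, characterize the first by the discrete Hardy inequality (Theorem~\ref{T:disc.hardy}) after rescaling, and the second by the discrete Landau resonance (Theorem~\ref{T:disc.lp.emb.}) together with the asymptotic evaluation of $B(x_k,x_{k+1})$ from Theorem~\ref{T:cont.hardy}; the seven case combinations and the exponent arithmetic you report all check out. One minor slip in wording: to cast \eqref{Ak inequality} into the form \eqref{E:disc.hardy} you should substitute $y_k=a_k^r$ (not $a_k^p$) and raise to the power $r$, which is indeed the same as relabelling $\tilde p=p/r$, $\tilde q=q/r$ as you then say, so the intent and conclusion are correct.
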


\begin{proof}
By Lemma~\ref{L:equiv. ineq.}, the best constant in \eqref{main} satisfies $C \approx C' + C''$, where $C'$ and $C''$ are the best constant in \eqref{Ak inequality} and \eqref{Bk inequality}, respectively. Characterization of $C'$ can be obtained by Theorem~\ref{T:disc.hardy} combined with~\eqref{A(a,b)}, and he characterization of $C''$ follows from Theorems~\ref{T:disc.lp.emb.} and~\ref{T:cont.hardy}.
\end{proof}

\begin{rem}
Given a discretizing sequence $\{x_k\}_{k=-\infty}^{M}$ of $W$, we can write
\begin{align}\label{Vr(0,xk)}
V_r(0, x_k) =
    \begin{dcases}
        \bigg(\sum_{i=-\infty}^k \int_{x_{i-1}}^{x_i} v^{\frac{1}{1-r}}\bigg)^{\frac{1-r}{r}}, & \text{if $0<r<1$,}
            \\
        \sup_{i\leq k} \esup\limits_{s \in (x_{i-1}, x_i)} v(s), & \text{if $r=1$}.
    \end{dcases}
\end{align}
Since $\{2^{-\frac{k}{p}} \big( \int_{x_k}^{\infty} u \big)^{\frac{1}{q}}\}$ is strongly decreasing, we can infer from \eqref{Vr(0,xk)} on applying either \eqref{dec.sup-sum} (when $r < 1$) or \eqref{dec.sup-sup} (when $r=1$) that
\begin{equation}\label{A1-equiv}
\sup_{k \leq M-1} 2^{-\frac{k}{p}} \bigg( \int_{x_k}^{\infty} u \bigg)^{\frac{1}{q}} V_r(0,x_k) \approx \sup_{k \leq M-1} 2^{-\frac{k}{p}} \bigg( \int_{x_k}^{\infty} u \bigg)^{\frac{1}{q}} V_r(x_{k-1},x_k) = A_1.
\end{equation}
Consequently, if $A_1 < \infty$, then, owing to
$\lim_{k\rightarrow -\infty} 2^{-\frac{k}{p}} \big( \int_{x_k}^{\infty} u \big)^{\frac{1}{q}} = \infty$,
we have
\begin{equation}\label{lim-0}
\lim_{k\rightarrow -\infty} V_r(0, x_k) = 0.
\end{equation}
Moreover, it is evident that $A_1 \leq A_2$. Additionally, using \eqref{power-rule for tails} with $\beta = \frac{p}{p-q}$, we obtain for each $k \in \mathbb{Z}$, $k \leq M-1$, that
\begin{align}\label{A1<A3}
A_1 &  = \sup_{k \leq M-1} 2^{-\frac{k}{p}} V_r(x_{k-1}, x_k) \bigg(\sum_{i=k}^{M-1} \int_{x_i}^{x_{i+1}} u \bigg)^{\frac{1}{q}}\notag \\
& \approx \sup_{k \leq M-1} 2^{-\frac{k}{p}} V_r(x_{k-1}, x_k) \bigg(\sum_{i=k}^{M-1} \bigg(\int_{x_i}^{x_{i+1}} u\bigg) \bigg(\int_{x_i}^{\infty} u\bigg)^{\frac{q}{p-q}}  \bigg)^{\frac{p-q}{pq}} \notag\\
& \leq  \sup_{k \leq M-1} \bigg(\sum_{i=k}^{M-1} \bigg(\int_{x_i}^{x_{i+1}} u\bigg) \bigg(\int_{x_i}^{\infty} u\bigg)^{\frac{q}{p-q}} \sup_{m \leq i} 2^{-m\frac{q}{p-q}} V_r(x_{m-1}, x_m)^{\frac{pq}{p-q}} \bigg)^{\frac{p-q}{pq}} \notag\\
& = A_3.
\end{align}
We can similarly show that $A_1 \lesssim A_4$. Therefore, the conditions $A_2<\infty$, $A_3 < \infty$ and $A_4 < \infty$ yield \eqref{lim-0} as well. Consequently, without loss of generality we can (and will) assume that
\begin{equation*}
    \lim_{t\rightarrow 0+} V_r(0,t) = 0.
\end{equation*}
\end{rem}

\section{Proofs of the main results} \label{S:Proofs}

\begin{proof}[Proof of Theorem \ref{T:main}]
\rm{(i)} We  have by Theorem~\ref{C:discrete solutions}(i) that the best constant $C$ in \eqref{main} satisfies $C \approx A_1 + B_1$. We will prove that $C_1 \approx A_1 + B_1$. Since  $\{ x_k\}_{k=-\infty}^{M}$ is the discretizing sequence of $W$, we have
\begin{align}\label{C1-equiv}
C_1 &= \sup_{k \leq M-1} \esup_{t \in (x_k, x_{k+1})} \bigg(\int_0^t w\bigg)^{-\frac{1}{p}}\bigg( \int_t^{\infty} u \bigg)^{\frac{1}{q}} V_r(0,t) \notag \\
&\approx\sup_{k \leq M-1} 2^{-\frac{k}{p}} \esup_{t \in (x_k, x_{k+1})} \bigg( \int_t^{\infty} u \bigg)^{\frac{1}{q}} V_r(0,t).
\end{align}
Using the fact that
\begin{equation}\label{V-cut}
V_r(0,t) \approx V_r(0, x_k) + V_r(x_k, t) \quad\text{for $t \in (x_k, x_{k+1})$,}
\end{equation}
we obtain that
\begin{align*}
C_1 & \approx  \sup_{k \leq M-1} 2^{-\frac{k}{p}} \bigg( \int_{x_k}^{\infty} u \bigg)^{\frac{1}{q}} V_r(0,x_k) +  \sup_{k \leq M-1} 2^{-\frac{k}{p}} \esup_{t \in (x_k, x_{k+1})} \bigg( \int_t^{\infty} u \bigg)^{\frac{1}{q}} V_r(x_k,t).
\end{align*}
 Then, in view of \eqref{A1-equiv}, we arrive at
\begin{align*}
C_1 \approx  A_1 + B_1 +  \sup_{k \leq M-2} 2^{-\frac{k}{p}} \bigg( \int_{x_{k+1}}^{\infty} u \bigg)^{\frac{1}{q}} V_r(x_k, x_{k+1}) \approx A_1 + B_1.
\end{align*}
\medskip

\rm{(ii)} Similarly as above, owing to Theorem~\ref{C:discrete solutions}(ii), it suffices to show that that $C_1 + C_2 \approx A_1 + B_2$. Observe that, for any $x, y \in \mathbb{R}$,
\begin{align}
\esup_{t \in (x, y)} \bigg(\int_t^{y} u \bigg)^{\frac{1}{q}} V_r(x, t) & \approx \esup_{t \in (x, y)} \bigg(\int_t^y \bigg(\int_s^y u \bigg)^{\frac{q}{1-q}} u(s) ds \bigg)^{\frac{1-q}{q}} V_r(x, t) \notag\\
&  \leq  \bigg(\int_x^y \bigg(\int_s^y u \bigg)^{\frac{q}{1-q}} u(s) V_r(x, s)^{\frac{q}{1-q}} ds \bigg)^{\frac{1-q}{q}}. \label{upper-B1-B2}
\end{align}
We conclude that $B_1 \lesssim B_2$. In addition, we proved in the preceding case that $C_1 \approx A_1 + B_1$. Thus, $A_1 \lesssim C_1 \lesssim A_1 + B_2$.
On the other hand, since $\{ x_k\}_{k=-\infty}^{M}$ is the discretizing sequence of $W$, properties of $\{ x_k\}_{k=-\infty}^{M}$ and \eqref{dec.sup-sum} yield that
\begin{align*}
C_2  & = \sup_{k \leq M-1} \sup_{x\in (x_k, x_{k+1})} \bigg(\int_0^x w\bigg)^{-\frac{1}{p}} \bigg( \int_0^x \bigg( \int_t^{\infty} u \bigg)^{\frac{q}{1-q}} u(t) V_r(0,t)^{\frac{q}{1-q}} dt \bigg)^{\frac{1-q}{q}} \\
&\approx \sup_{k \leq M-1} 2^{-\frac{k}{p}} \bigg( \sum_{i=-\infty}^k  \int_{x_i}^{x_{i+1}} \bigg( \int_t^{\infty} u \bigg)^{\frac{q}{1-q}} u(t) V_r(0,t)^{\frac{q}{1-q}} dt \bigg)^{\frac{1-q}{q}} \\
&\approx \sup_{k \leq M-1} 2^{-\frac{k}{p}} \bigg(  \int_{x_k}^{x_{k+1}} \bigg( \int_t^{\infty} u \bigg)^{\frac{q}{1-q}} u(t) V_r(0,t)^{\frac{q}{1-q}} dt \bigg)^{\frac{1-q}{q}}.
\end{align*}
It is thus clear that $B_2 \lesssim C_2$, and, in turn, $A_1 + B_2 \lesssim C_1 + C_2$. It remains to show that $C_2 \lesssim A_1 + B_2$. Using \eqref{V-cut}, we have
\begin{align*}
C_2  &\approx  \sup_{k \leq M-1} 2^{-\frac{k}{p}} \bigg( \int_{x_k}^{x_{k+1}} \bigg( \int_t^{\infty} u \bigg)^{\frac{q}{1-q}} u(t) V_r(x_k,t)^{\frac{q}{1-q}} dt \bigg)^{\frac{1-q}{q}} \\
& \quad + \sup_{k \leq M-1} 2^{-\frac{k}{p}} V_r(0,x_k) \bigg( \int_{x_k}^{x_{k+1}} \bigg( \int_t^{\infty} u \bigg)^{\frac{q}{1-q}} u(t)  dt
\bigg)^{\frac{1-q}{q}}.
\end{align*}
Integration by parts gives
\begin{align*}
C_2  &\lesssim  \sup_{k \leq M-1} 2^{-\frac{k}{p}} \bigg( \int_{x_k}^{x_{k+1}} \bigg( \int_t^{\infty} u \bigg)^{\frac{1}{1-q}} d \big[V_r(x_k,t)^{\frac{q}{1-q}}\big] \bigg)^{\frac{1-q}{q}} \\
&\quad + \sup_{k \leq M-1} 2^{-\frac{k}{p}} \lim_{t \rightarrow x_k+} \bigg( \int_t^{\infty} u \bigg)^{\frac{1}{q}} V_r(x_k,t)\\
& \quad + \sup_{k \leq M-1} 2^{-\frac{k}{p}} V_r(0,x_k) \bigg( \int_{x_k}^{\infty} u \bigg)^{\frac{1}{q}}\\
& = C_{2,1} + C_{2,2} + C_{2,3}.
\end{align*}
Using \eqref{A1-equiv}, we get $C_{2,3} \approx A_1$. On the other hand, note that
\begin{align} \label{limit<supremum}
\lim_{t \rightarrow x_k+} \bigg(\int_t^{\infty} u \bigg)^{\frac{1}{q}} V_r(x_k, t) \leq \esup_{s \in (x_k, x_{k+1})} \bigg(\int_s^{\infty} u \bigg)^{\frac{1}{q}} V_r(x_k, s).
\end{align}
Thus, \eqref{limit<supremum} gives
\begin{align*}
C_{2,2} & \lesssim \sup_{k \leq M-1} 2^{-\frac{k}{p}} \esup_{t \in (x_k, x_{k+1})} \bigg(\int_t^{x_{k+1}} u \bigg)^{\frac{1}{q}} V_r(x_k, t)
+ \sup_{k \leq M-2} 2^{-\frac{k}{p}} \bigg(\int_{x_{k+1}}^{\infty} u \bigg)^{\frac{1}{q}} V_r(x_k, {x_{k+1}}).
\end{align*}
Using \eqref{upper-B1-B2}, we obtain $C_{2,2} \lesssim B_2 + A_1$. Moreover,
\begin{align*}
C_{2,1} & \approx  \sup_{k \leq M-1} 2^{-\frac{k}{p}} \bigg( \int_{x_k}^{x_{k+1}} \bigg( \int_t^{x_{k+1}} u \bigg)^{\frac{1}{1-q}} d \big[V_r(x_k,t)^{\frac{q}{1-q}}\big] \bigg)^{\frac{1-q}{q}} \\
& \quad + \sup_{k \leq M-2} 2^{-\frac{k}{p}} \bigg( \int_{x_{k+1}}^{\infty} u \bigg)^{\frac{1}{q}} \bigg( \int_{x_k}^{x_{k+1}}  d \big[V_r(x_k,t)^{\frac{q}{1-q}}\big]\bigg)^{\frac{1-q}{q}}.
\end{align*}
Thus, integration by parts yields that $ C_{2,1}  \lesssim B_2 + A_1$. Altogether, we arrive at $C_2 \lesssim C_{2,1} + C_{2,2} + C_{2,3} \lesssim A_1 + B_2$.

\medskip

\rm{(iii)} By Theorem~\ref{C:discrete solutions}(iii), we have that the best constant $C$ in \eqref{main} satisfies $C \approx A_2 + B_1$. We will now show that $C_1+C_3 \approx A_2 + B_1$. We have shown in case (i) that $C_1 \approx A_1 + B_1$. Since $A_1 \leq A_2$ is obvious, we get $B_1 \lesssim C_1 \lesssim A_2 + B_1$. Next, we will find a suitable upper estimate for $C_3$. As $\{x_k\}_{k=-\infty}^M$ is a discretizing sequence of $W$, we have
\begin{align*}
C_3 & = \sup_{k \leq M-1} \sup_{x \in (x_k, x_{k+1})} \bigg(\int_{x}^{\infty} u\bigg)^{\frac{1}{q}} \bigg(\int_0^{x} \bigg(\int_0^t w \bigg)^{-\frac{p}{p-r}} w(t) V_r(0,t)^{\frac{pr}{p-r}} dt \bigg)^{\frac{p-r}{pr}} \\
& \approx \sup_{k \leq M-1}  \bigg(\int_{x_k}^{\infty} u\bigg)^{\frac{1}{q}} \bigg(\int_0^{x_{k}} \bigg(\int_0^t w \bigg)^{-\frac{p}{p-r}} w(t) V_r(0,t)^{\frac{pr}{p-r}} dt \bigg)^{\frac{p-r}{pr}} \\
    &\quad +  \sup_{k \leq M-1} \sup_{x \in (x_k, x_{k+1})} \bigg(\int_{x}^{\infty} u\bigg)^{\frac{1}{q}} \bigg(\int_{x_k}^x \bigg(\int_0^t w \bigg)^{-\frac{p}{p-r}} w(t) V_r(0,t)^{\frac{pr}{p-r}} dt \bigg)^{\frac{p-r}{pr}}\\
&= C_{3,1} + C_{3,2}.
\end{align*}
On the other hand, in view of  \eqref{Vr(0,xk)}, using \eqref{dec.sum-sum} when $r<1$ and  \eqref{dec.sum-sup} when $r=1$,
\begin{equation}\label{A2-equiv}
\sum_{i= -\infty}^k 2^{-i\frac{r}{p-r}} V_r(0, x_i)^{\frac{pr}{p-r}} \approx \sum_{i= -\infty}^k 2^{-i\frac{r}{p-r}} V_r(x_{i-1}, x_i)^{\frac{pr}{p-r}}
\end{equation}
holds. Applying \eqref{A2-equiv}, we obtain
\begin{align}\label{lower-A2}
    \int_0^{x_{k}} \bigg(\int_0^t w \bigg)^{-\frac{p}{p-r}} w(t) V_r(0,t)^{\frac{pr}{p-r}} dt & = \sum_{i= -\infty}^k  \int_{x_{i-1}}^{x_i} \bigg(\int_0^t w\bigg)^{-\frac{p}{p-r}} w(t) V_r(0,t)^{\frac{pr}{p-r}} dt  \notag
        \\
    & \lesssim \sum_{i= -\infty}^k 2^{-i\frac{r}{p-r}} V_r(x_{i-1}, x_i)^{\frac{pr}{p-r}}.
\end{align}
Thus, \eqref{lower-A2} implies $C_{3,1} \lesssim A_2$. Moreover, \eqref{C1-equiv} together with $C_1 \lesssim A_2 +B_1$ yield
\begin{align*}
    C_{3,2} &\leq \sup_{k \leq M-1} 2^{-\frac{k}{p}}  \esup_{x \in (x_k, x_{k+1})} \bigg(\int_{x}^{\infty} u\bigg)^{\frac{1}{q}} V_r(0,x) \approx C_1 \lesssim A_2 + B_1.
\end{align*}
Consequently, we arrive at $C_1 + C_3 \lesssim A_2 + B_1$. We will be done once we show that $A_2 \lesssim C_3 + C_1$. To this end, we need the estimate
\begin{align} \label{upper-A2}
    \sum_{i= -\infty}^{k-1}  2^{-i\frac{r}{p-r}} V_r(x_{i-1}, x_i)^{\frac{pr}{p-r}}
    	&\approx \sum_{i= -\infty}^{k-1}  \int_{x_i}^{x_{i+1}} \bigg(\int_0^t w \bigg)^{-\frac{p}{p-r}}  w(t) dt V_r(x_{i-1}, x_i)^{\frac{pr}{p-r}} \notag
            \\
    	& \le \int_{0}^{x_{k}} \bigg(\int_0^t w \bigg)^{-\frac{p}{p-r}}  w(t) V_r(0, t)^{\frac{pr}{p-r}} dt.
\end{align}
In view of \eqref{upper-A2}, we have
\begin{align*}
    A_2 & \approx  A_1 + \sup_{k\leq M-1} \bigg(\int_{x_k}^{\infty} u\bigg)^{\frac{1}{q}} \bigg( \int_{0}^{x_{k}} \bigg(\int_0^t w \bigg)^{-\frac{p}{p-r}}  w(t) V_r(0, t)^{\frac{pr}{p-r}} dt \bigg)^{\frac{p-r}{pr}}.
\end{align*}
Since $A_1 \lesssim C_1$, we arrive at
\begin{align*}
A_2 & \lesssim  C_1 +  \sup_{k\leq M-1}  \sup_{x \in (x_k, x_{k+1})} \bigg(\int_{x}^{\infty} u\bigg)^{\frac{1}{q}} \bigg( \int_{0}^{x} \bigg(\int_0^t w \bigg)^{-\frac{p}{p-r}}  w(t) V_r(0, t)^{\frac{pr}{p-r}} dt \bigg)^{\frac{p-r}{pr}}\\
& = C_1 + C_3,
\end{align*}
and the assertion follows.
\medskip

\rm{(iv)} By Theorem~\ref{C:discrete solutions}(iv), the best constant $C$ in \eqref{main} satisfies $C \approx A_2 + B_2$. We will show that $C_1 + C_2 + C_3 \approx A_2 + B_2$. Note that, in the proofs of the cases (i)-(iii), the actual positions of the parameters did not play any role. We can survey these results as
\begin{align*}
    & C_1 \approx A_1+B_1 \leq A_2 + B_2,
        \\
    & C_2 \lesssim A_1 + B_2 \leq A_2 + B_2,
        \\
    &C_3 \leq A_2 + B_1 \lesssim A_2 + B_2.
\end{align*}
On the other hand, we have $A_2 \lesssim C_3 + C_1$ and $B_2 \lesssim C_2$. Combining these estimates gives $A_2 +B_2 \lesssim C_1 + C_2 + C_3 \lesssim A_2 + B_2$, and the assertion follows once again.
\medskip

\rm{(v)} By Theorem~\ref{C:discrete solutions}(v) we have $C \approx A_3+B_3$. We will prove that $A_3+B_3 \approx  C_4 + C_5$. Taking \eqref{Vr(0,xk)} into consideration, we observe that, using \eqref{dec.sup-sum} when $r < 1$ and  \eqref{dec.sup-sup} when $r=1$,
\begin{equation}\label{sup-dec}
\sup_{i\leq {k}} 2^{-i\frac{q}{p-q}} V_r(0,x_i)^{\frac{pq}{p-q}} \approx  \sup_{i\leq {k}} 2^{-i\frac{q}{p-q}} V_r(x_{i-1},x_i)^{\frac{pq}{p-q}}.
\end{equation}
By \eqref{sup-dec}, we get the chain of relations
\begin{align} \label{A3-equiv}
\esup_{t \in (0, x_k)} \bigg(\int_0^t w\bigg)^{-\frac{q}{p-q}} V_r(0,t)^{\frac{pq}{p-q}}  & = \sup_{i\leq {k}} \esup_{t \in (x_{i-1},x_i)} \bigg(\int_0^t w\bigg)^{-\frac{q}{p-q}} V_r(0,t)^{\frac{pq}{p-q}} \notag\\
& \approx  \sup_{i\leq {k}} 2^{-i\frac{q}{p-q}} V_r(x_{i-1},x_i)^{\frac{pq}{p-q}}.
\end{align}
We will next show that $C_4 \lesssim A_3+B_3$. Integration by parts gives
\begin{align*}
C_4^{\frac{pq}{p-q}} & \approx \sum_{k= -\infty}^{M-2} \int_{x_k}^{x_{k+1}} \bigg( \int_t^{\infty} u\bigg)^{\frac{q}{p-q}} u(t)  \esup_{s\in (0, t)}  \bigg( \int_0^s w \bigg)^{-\frac{q}{p-q}} V_r(0,s)^{\frac{pq}{p-q}} dt \\
    & \quad + \int_{x_{M-1}}^{\infty} \bigg( \int_t^{\infty} u\bigg)^{\frac{q}{p-q}} u(t)  \esup_{s\in (0, t)}  \bigg( \int_0^s w \bigg)^{-\frac{q}{p-q}} V_r(0,s)^{\frac{pq}{p-q}} dt\\
& \lesssim \sum_{k= -\infty}^{M-2} \bigg[ \bigg(\int_{x_k}^{\infty} u\bigg)^{\frac{p}{p-q}} - \bigg(\int_{x_{k+1}}^{\infty} u\bigg)^{\frac{p}{p-q}} \bigg] \esup_{s\in (0, x_k)}  \bigg(\int_0^s w \bigg)^{-\frac{q}{p-q}} V_r(0,s)^{\frac{pq}{p-q}} \\
	&\quad + \sum_{k= -\infty}^{M-2} \int_{[x_k,x_{k+1})} \bigg(\int_t^{\infty} u\bigg)^{\frac{p}{p-q}} d\bigg[\esup_{s\in (0, t)} \bigg(\int_0^s w \bigg)^{-\frac{q}{p-q}} V_r(0,s)^{\frac{pq}{p-q}} \bigg]\\
	& \quad + \int_{x_{M-1}}^{\infty} \bigg( \int_t^{\infty} u\bigg)^{\frac{q}{p-q}} u(t)  \esup_{s\in (0, t)}  \bigg( \int_0^s w \bigg)^{-\frac{q}{p-q}} V_r(0,s)^{\frac{pq}{p-q}} dt\\
& \approx \sum_{k= -\infty}^{M-2} \bigg[ \bigg(\int_{x_k}^{\infty} u\bigg)^{\frac{p}{p-q}} - \bigg(\int_{x_{k+1}}^{\infty} u\bigg)^{\frac{p}{p-q}} \bigg] \esup_{s\in (0, x_k)}  \bigg(\int_0^s w \bigg)^{-\frac{q}{p-q}} V_r(0,s)^{\frac{pq}{p-q}} \\
    & \quad  + \sum_{k= -\infty}^{M-2} \int_{[x_k,x_{k+1})} \bigg(\int_t^{x_{k+1}} u\bigg)^{\frac{p}{p-q}} d\bigg[\esup_{s\in (0, t)}  \bigg(\int_0^s w \bigg)^{-\frac{q}{p-q}} V_r(0,s)^{\frac{pq}{p-q}} \bigg]  \\
	    &\quad + \sum_{k= -\infty}^{M-1}  \bigg(\int_{x_{k}}^{\infty} u\bigg)^{\frac{p}{p-q}} \bigg[ \esup_{s\in (0, x_{k})}  \bigg(\int_0^s w \bigg)^{-\frac{q}{p-q}} V_r(0,s)^{\frac{pq}{p-q}}\\
			&\hspace{5.5cm} -\esup_{s\in (0, x_{k-1})}  \bigg(\int_0^s w \bigg)^{-\frac{q}{p-q}} V_r(0,s)^{\frac{pq}{p-q}} \bigg]\\
			& \quad + \int_{x_{M-1}}^{\infty} \bigg( \int_t^{\infty} u\bigg)^{\frac{q}{p-q}} u(t)  \esup_{s\in (0, t)}  \bigg( \int_0^s w \bigg)^{-\frac{q}{p-q}} V_r(0,s)^{\frac{pq}{p-q}} dt.
\end{align*}
One more use of integration by parts tells us that
\begin{align}\label{C_4-1}
C_4^{\frac{pq}{p-q}} & \lesssim
\sum_{k= -\infty}^{M-2} \bigg[ \bigg(\int_{x_k}^{\infty} u\bigg)^{\frac{p}{p-q}} - \bigg(\int_{x_{k+1}}^{\infty} u\bigg)^{\frac{p}{p-q}} \bigg] \esup_{s\in (0, x_k)}  \bigg(\int_0^s w \bigg)^{-\frac{q}{p-q}} V_r(0,s)^{\frac{pq}{p-q}} \\
    & \quad  + \sum_{k= -\infty}^{M-1} \int_{x_k}^{x_{k+1}} \bigg(\int_t^{x_{k+1}} u\bigg)^{\frac{q}{p-q}} u(t) \esup_{s\in (0, t)}  \bigg(\int_0^s w \bigg)^{-\frac{q}{p-q}} V_r(0,s)^{\frac{pq}{p-q}} dt  \notag\\
	    &\quad + \sum_{k= -\infty}^{M-1}  \bigg(\int_{x_{k}}^{\infty} u\bigg)^{\frac{p}{p-q}} \bigg[ \esup_{s\in (0, x_{k})}  \bigg(\int_0^s w \bigg)^{-\frac{q}{p-q}} V_r(0,s)^{\frac{pq}{p-q}} \notag\\
			&\hspace{5.5cm} -\esup_{s\in (0, x_{k-1})}  \bigg(\int_0^s w \bigg)^{-\frac{q}{p-q}} V_r(0,s)^{\frac{pq}{p-q}} \bigg].\notag
\end{align}
Applying \eqref{Abel} with
\begin{equation*}
c_k = \int_{x_k}^{x_{k+1}} \bigg(\int_t^{\infty} u\bigg)^{\frac{q}{p-q}} u(t)dt
\end{equation*}
and
\begin{equation}\label{bk}
    b_k = \esup_{s\in (0,x_k)} \bigg(\int_0^s w \bigg)^{-\frac{q}{p-q}} V_r(0,s)^{\frac{pq}{p-q}},
\end{equation}
we can see that
\begin{align*}
&\sum_{k= -\infty}^{M-2} \bigg[ \bigg(\int_{x_k}^{\infty} u\bigg)^{\frac{p}{p-q}} - \bigg(\int_{x_{k+1}}^{\infty} u\bigg)^{\frac{p}{p-q}} \bigg] \esup_{s\in (0, x_k)}  \bigg(\int_0^s w \bigg)^{-\frac{q}{p-q}} V_r(0,s)^{\frac{pq}{p-q}} \\
& \quad = \sum_{k= -\infty}^{M-2} \bigg[\int_{x_k}^{x_{k+1}} \bigg(\int_t^{\infty} u\bigg)^{\frac{q}{p-q}} u(t)dt\bigg] \esup_{s\in (0, x_k)}  \bigg(\int_0^s w \bigg)^{-\frac{q}{p-q}} V_r(0,s)^{\frac{pq}{p-q}} \\
&\quad  \approx \sum_{k= -\infty}^{M-2}  \bigg(\int_{x_{k}}^{x_{M-1}} u\bigg)^{\frac{p}{p-q}} \bigg[ \esup_{s\in (0, x_{k})}  \bigg(\int_0^s w \bigg)^{-\frac{q}{p-q}} V_r(0,s)^{\frac{pq}{p-q}}\\
			&\hspace{5.5cm} -\esup_{s\in (0, x_{k-1})}  \bigg(\int_0^s w \bigg)^{-\frac{q}{p-q}} V_r(0,s)^{\frac{pq}{p-q}} \bigg]\\
		&\quad  + \lim_{k \rightarrow -\infty} \bigg(\int_{x_k}^{x_{M-1}} u\bigg)^{\frac{p}{p-q}} \esup_{s\in (0, x_{k})}  \bigg(\int_0^s w \bigg)^{-\frac{q}{p-q}} V_r(0,s)^{\frac{pq}{p-q}}.
\end{align*}
Plugging this in \eqref{C_4-1}, we obtain
\begin{align*}
C_4^{\frac{pq}{p-q}} & \lesssim
\sum_{k= -\infty}^{M-1} \int_{x_k}^{x_{k+1}} \bigg(\int_t^{x_{k+1}} u\bigg)^{\frac{q}{p-q}} u(t) \esup_{s\in (0, t)}  \bigg(\int_0^s w \bigg)^{-\frac{q}{p-q}} V_r(0,s)^{\frac{pq}{p-q}} dt  \notag\\
	    &\quad + \sum_{k= -\infty}^{M-1}  \bigg(\int_{x_{k}}^{\infty} u\bigg)^{\frac{p}{p-q}} \bigg[ \esup_{s\in (0, x_{k})}  \bigg(\int_0^s w \bigg)^{-\frac{q}{p-q}} V_r(0,s)^{\frac{pq}{p-q}} \notag\\
			&\hspace{5.5cm} -\esup_{s\in (0, x_{k-1})}  \bigg(\int_0^s w \bigg)^{-\frac{q}{p-q}} V_r(0,s)^{\frac{pq}{p-q}} \bigg]\\
			& \quad + \lim_{k \rightarrow -\infty} \bigg(\int_{x_k}^{\infty} u\bigg)^{\frac{p}{p-q}} \esup_{s\in (0, x_{k})}  \bigg(\int_0^s w \bigg)^{-\frac{q}{p-q}} V_r(0,s)^{\frac{pq}{p-q}}\\
		& = C_{4,1} + C_{4,2} + C_{4,3}.
\end{align*}
Next, \eqref{difference-u} with $s=\frac{q}{p-q}$, $b_k$ as in \eqref{bk} and $a_k = \int_{x_k}^{x_{k+1}} u$ combined with \eqref{A3-equiv} give
\begin{align} \label{C43-A1}
C_{4,2} + C_{4,3} & \approx  \sum_{k= -\infty}^{M-1}  \bigg(\int_{x_k}^{x_{k+1}} u\bigg)\bigg(\int_{x_k}^{\infty} u\bigg)^{\frac{q}{p-q}} \esup_{s\in (0, x_k)}  \bigg(\int_0^s w \bigg)^{-\frac{q}{p-q}} V_r(0,s)^{\frac{pq}{p-q}} \notag \\
&\approx A_3^{\frac{pq}{p-q}}.
\end{align}
We shall now find a suitable upper estimate for $C_{4,1}$. Observe that
\begin{align*}
C_{4,1}  &\approx \sum_{k= -\infty}^{M-1} \int_{x_k}^{x_{k+1}} \bigg(\int_t^{x_{k+1}} u\bigg)^{\frac{q}{p-q}} u(t) dt \esup_{s\in (0, x_k)}  \bigg(\int_0^s w \bigg)^{-\frac{q}{p-q}} V_r(0,s)^{\frac{pq}{p-q}} \\
&\quad + \sum_{k= -\infty}^{M-1} \int_{x_k}^{x_{k+1}} \bigg(\int_t^{x_{k+1}} u\bigg)^{\frac{q}{p-q}} u(t) \esup_{s\in (x_k, t)}  \bigg(\int_0^s w \bigg)^{-\frac{q}{p-q}} V_r(0,s)^{\frac{pq}{p-q}} dt.
\end{align*}
Using the fact that $\{x_k\}_{k=-\infty}^M$ is a discretizing sequence of $W$, we get
\begin{align*}
C_{4,1}  &\lesssim  \sum_{k= -\infty}^{M-1}  \bigg(\int_{x_k}^{x_{k+1}} u\bigg)^{\frac{p}{p-q}} \esup_{s\in (0, x_k)}  \bigg(\int_0^s w \bigg)^{-\frac{q}{p-q}} V_r(0,s)^{\frac{pq}{p-q}} \\
& \quad + \sum_{k= -\infty}^{M-1} 2^{-k\frac{q}{p-q}} \int_{x_k}^{x_{k+1}} \bigg(\int_t^{x_{k+1}} u\bigg)^{\frac{q}{p-q}} u(t) V_r(0,t)^{\frac{pq}{p-q}} dt \\
& \approx  \sum_{k= -\infty}^{M-1}  \bigg(\int_{x_k}^{x_{k+1}} u\bigg)^{\frac{p}{p-q}} \esup_{s\in (0, x_k)}  \bigg(\int_0^s w \bigg)^{-\frac{q}{p-q}} V_r(0,s)^{\frac{pq}{p-q}}  \\
& \quad + \sum_{k= -\infty}^{M-1} 2^{-k\frac{q}{p-q}} \int_{x_k}^{x_{k+1}} \bigg(\int_t^{x_{k+1}} u\bigg)^{\frac{q}{p-q}} u(t) V_r(x_k,t)^{\frac{pq}{p-q}} dt \\
& \quad + \sum_{k= -\infty}^{M-1} 2^{-k\frac{q}{p-q}} V_r(0,x_k)^{\frac{pq}{p-q}} \bigg(\int_{x_k}^{x_{k+1}} u\bigg)^{\frac{p}{p-q}}.
\end{align*}
Note that we applied \eqref{V-cut} to obtain the last equivalence. We further have
\begin{align*}
C_{4,1}  &\lesssim \sum_{k= -\infty}^{M-1}  \bigg(\int_{x_k}^{x_{k+1}} u\bigg)\bigg(\int_{x_k}^{\infty} u\bigg)^{\frac{q}{p-q}} \esup_{s\in (0, x_k)}  \bigg(\int_0^s w \bigg)^{-\frac{q}{p-q}} V_r(0,s)^{\frac{pq}{p-q}} \bigg)^{\frac{p-q}{pq}} \\
& \quad + \sum_{k= -\infty}^{M-1} 2^{-k\frac{q}{p-q}} \int_{x_k}^{x_{k+1}} \bigg(\int_t^{x_{k+1}} u\bigg)^{\frac{q}{p-q}} u(t) V_r(x_k,t)^{\frac{pq}{p-q}} dt \\
& \quad + \sum_{k= -\infty}^{M-1}  \bigg(\int_{x_k}^{x_{k+1}} u\bigg)\bigg(\int_{x_k}^{\infty} u\bigg)^{\frac{q}{p-q}} \sup_{i \leq k} 2^{-i\frac{q}{p-q}} V_r(0,x_i)^{\frac{pq}{p-q}}.
\end{align*}
In view of \eqref{sup-dec} and \eqref{A3-equiv}, we obtain that
\begin{align*}
C_{4,1} & \lesssim A_3^{\frac{p-q}{pq}} +  \sum_{k= -\infty}^{M-1} 2^{-k\frac{q}{p-q}} \int_{x_k}^{x_{k+1}} \bigg(\int_t^{x_{k+1}} u\bigg)^{\frac{q}{p-q}} u(t) V_r(x_k,t)^{\frac{pq}{p-q}} dt.
\end{align*}
Moreover, since $V_r(x_k, t)$ is an increasing function and  $\frac{p-q}{p(1-q)} <1$, applying a special case of \cite[Proposition 2.1]{He-St:93}, we get
\begin{align*}
    &\int_{x_k}^{x_{k+1}} \bigg(\int_t^{x_{k+1}} u\bigg)^{\frac{q}{p-q}} u(t) V_r(x_k, t)^{\frac{pq}{p-q}} dt
        \\
    & \hspace{3cm}\lesssim  \bigg(\int_{x_k}^{x_{k+1}} \bigg(\int_t^{x_{k+1}} u\bigg)^{\frac{q}{1-q}} u(t) V_r(x_k,t)^{\frac{q}{1-q}} dt \bigg)^{\frac{p(1-q)}{p-q}}.
\end{align*}
Therefore, $C_{4,1}^{\frac{pq}{p-q}} \lesssim A_3 + B_3$. Consequently, \eqref{C43-A1} implies
\begin{equation}\label{C4<A3+B3}
    C_4 \lesssim A_3 + B_3.
\end{equation}
Next, we will prove that $C_5 \lesssim A_3 + B_3$. Assume that $\max\{A_3, B_3\} < \infty$. Then
\begin{align*}
C_5^{\frac{pq}{p-q}} & \lesssim \sum_{k=-\infty}^{M-2} \int_{x_k}^{x_{k+1}} \bigg(\int_0^x w\bigg)^{-\frac{p}{p-q}} w(x) \bigg( \int_0^{x} \bigg( \int_t^{x} u \bigg)^{\frac{q}{1-q}} u(t) V_r(0,t)^{\frac{q}{1-q}} dt \bigg)^{\frac{p(1-q)}{p-q}} dx  \notag\\
& \hspace{1cm} +  \bigg(\int_0^{\infty}  w\bigg)^{-\frac{q}{p-q}} \bigg( \int_0^{\infty} \bigg( \int_t^{\infty} u \bigg)^{\frac{q}{1-q}} u(t) V_r(0,t)^{\frac{q}{1-q}} dt \bigg)^{\frac{p(1-q)}{p-q}} \notag\\
& \lesssim \sum_{k=-\infty}^{M-2}  2^{-k\frac{q}{p-q}} \bigg( \int_0^{{x_{k+1}}} \bigg( \int_t^{{x_{k+1}}} u \bigg)^{\frac{q}{1-q}} u(t) V_r(0,t)^{\frac{q}{1-q}} dt \bigg)^{\frac{p(1-q)}{p-q}}  \notag \\
& \hspace{1cm} +  \bigg(\int_0^{\infty}  w\bigg)^{-\frac{q}{p-q}} \bigg( \int_0^{\infty} \bigg( \int_t^{\infty} u \bigg)^{\frac{q}{1-q}} u(t) V_r(0,t)^{\frac{q}{1-q}} dt \bigg)^{\frac{p(1-q)}{p-q}}.
\end{align*}
On the other hand, \eqref{A1<A3} together with \eqref{A1-equiv} yields
\begin{equation}\label{lim-0+}
    \lim_{t\rightarrow 0+} \bigg(\int_t^{\infty} u\bigg)^{\frac{1}{1-q}} V_r(0,t)^{\frac{q}{1-q}} = 0.
\end{equation}
Since $V_r(0, t) \approx V_r(0, x_{M-1}) + V_r(x_{M-1}, t)$ for each $t \in (x_{M-1}, \infty)$,  we have
\begin{equation*}
\int_{x_{M-1}}^{\infty} \bigg( \int_s^{\infty} u \bigg)^{\frac{q}{1-q}} u(s) V_r(0,s)^{\frac{q}{1-q}} ds < \infty.
\end{equation*}
Consequently, for each  $t \in (x_{M-1}, \infty)$,
\begin{equation*}
V_r(0,t)^{\frac{q}{1-q}} \bigg( \int_t^{\infty} u \bigg)^{\frac{1}{1-q}} \lesssim \bigg( \int_{t}^{\infty} \bigg( \int_s^{\infty} u \bigg)^{\frac{q}{1-q}} u(s) V_r(0,s)^{\frac{q}{1-q}} ds \bigg)^{\frac{1-q}{q}},
\end{equation*}
and one immediately obtains
\begin{equation}\label{lim-infty}
\lim_{t\rightarrow \infty} \bigg(\int_t^{\infty} u\bigg)^{\frac{1}{1-q}} V_r(0,t)^{\frac{q}{1-q}} = 0.
\end{equation}
Thus, integration by parts yields that
\begin{align}
C_5^{\frac{pq}{p-q}} & \lesssim  \sum_{k=-\infty}^{M-1}  2^{-k\frac{q}{p-q}} \bigg( \int_0^{{x_{k+1}}} \bigg( \int_t^{{x_{k+1}}} u \bigg)^{\frac{1}{1-q}} d\big[ V_r(0,t)^{\frac{q}{1-q}} \big] \bigg)^{\frac{p(1-q)}{p-q}} = D_1. \label{C5<D1}
\end{align}
We have,
\begin{align*}
D_1 & =  \sum_{k=-\infty}^{M-1}  2^{-k\frac{q}{p-q}} \bigg( \sum_{i=-\infty}^k \int_{[x_i,x_{i+1})} \bigg( \int_t^{{x_{k+1}}} u \bigg)^{\frac{1}{1-q}} d\big[ V_r(0,t)^{\frac{q}{1-q}} \big] \bigg)^{\frac{p(1-q)}{p-q}} \notag\\
& \approx \sum_{k=-\infty}^{M-1}  2^{-k\frac{q}{p-q}} \bigg( \sum_{i=-\infty}^k \int_{[x_i,x_{i+1})} \bigg( \int_t^{{x_{i+1}}} u \bigg)^{\frac{1}{1-q}} d\big[ V_r(0,t)^{\frac{q}{1-q}} \big] \bigg)^{\frac{p(1-q)}{p-q}}  \notag\\
    & \quad + \sum_{k=-\infty}^{M-1}  2^{-k\frac{q}{p-q}} \bigg( \sum_{i=-\infty}^{k-1}  \bigg( \sum_{j=i}^{k-1} \int_{x_{j+1}}^{{x_{j+2}}} u \bigg)^{\frac{1}{1-q}}  \int_{[x_i,x_{i+1})} d\big[ V_r(0,t)^{\frac{q}{1-q}} \big] \bigg)^{\frac{p(1-q)}{p-q}}.
\end{align*}
Applying \eqref{dec.sum-sum} to the first term and Minkowski's inequality with $\frac{1}{1-q}>1$ to the second one, we obtain that
\begin{align*}
D_1 &  \lesssim \sum_{k=-\infty}^{M-1}  2^{-k\frac{q}{p-q}} \bigg( \int_{[x_k, x_{k+1})} \bigg( \int_t^{{x_{k+1}}} u \bigg)^{\frac{1}{1-q}} d\big[ V_r(0,t)^{\frac{q}{1-q}} \big] \bigg)^{\frac{p(1-q)}{p-q}}  \\
    & \quad + \sum_{k=-\infty}^{M-1}  2^{-k\frac{q}{p-q}} \bigg( \sum_{j=-\infty}^{k-1}  \bigg( \int_{x_{j+1}}^{{x_{j+2}}} u \bigg) \bigg( \int_0^{x_{j+1}} d\big[ V_r(0,t)^{\frac{q}{1-q}} \big]\bigg)^{1-q} \bigg)^{\frac{p}{p-q}}.
\end{align*}
Integration by parts together with \eqref{dec.sum-sum} gives
\begin{align*}
D_1 &  \lesssim \sum_{k=-\infty}^{M-1}  2^{-k\frac{q}{p-q}} \bigg( \int_{x_k}^{{x_{k+1}}} \bigg( \int_t^{{x_{k+1}}} u \bigg)^{\frac{q}{1-q}} u(t) V_r(0,t)^{\frac{q}{1-q}} \bigg)^{\frac{p(1-q)}{p-q}} \notag \\
    & \quad + \sum_{k=-\infty}^{M-1}  2^{-k\frac{q}{p-q}} \bigg( \int_{x_{k}}^{{x_{k+1}}} u \bigg)^{\frac{p}{p-q}} V_r(0,x_k)^{\frac{pq}{p-q}}.
\end{align*}
Then, applying \eqref{V-cut} and \eqref{sup-dec}, we arrive at
\begin{align}
D_1 &\lesssim  B_3^{\frac{pq}{p-q}} + \sum_{k=-\infty}^{M-1}  2^{-k\frac{q}{p-q}} \bigg( \int_{x_{k}}^{{x_{k+1}}} u \bigg)^{\frac{p}{p-q}} V_r(0,x_k)^{\frac{pq}{p-q}} \notag\\
& \lesssim B_3^{\frac{pq}{p-q}} + \sum_{k=-\infty}^{M-1}   \bigg( \int_{x_{k}}^{{x_{k+1}}} u \bigg) \bigg( \int_{x_{k}}^{\infty} u \bigg)^{\frac{q}{p-q}} \sup_{i \leq k} 2^{-i\frac{q}{p-q}} V_r(0,x_i)^{\frac{pq}{p-q}} \notag \\
& \approx B_3^{\frac{pq}{p-q}} + A_3^{\frac{pq}{p-q}} \label{D1<A3+B3}.
\end{align}
Finally, we arrive at $C_4 + C_5 \lesssim A_3 + B_3$. It remains to prove that $A_3 + B_3 \lesssim C_4 + C_5$. Putting $a_k = \int_{x_k}^{x_{k+1}} u$, $s=\frac{q}{p-q}$  and
$$
    b_k = \sup_{i\leq k}  2^{-i\frac{q}{p-q}} V_r(x_{i-1},x_i)^{\frac{pq}{p-q}}
$$
in \eqref{difference-u}, we have
\begin{align*}
A_3^{\frac{pq}{p-q}} & \approx \sum_{k=-\infty}^{M-1} \bigg( \int_{x_k}^{\infty} u\bigg)^{\frac{p}{p-q}} \\
	& \quad \times \Big[ \sup_{i\leq k}  2^{-i\frac{q}{p-q}} V_r(x_{i-1},x_i)^{\frac{pq}{p-q}} - \sup_{i\leq k-1}  2^{-i\frac{q}{p-q}} V_r(x_{i-1},x_i)^{\frac{pq}{p-q}}\Big]   \\
	&\quad +  \lim_{k \rightarrow -\infty} \bigg(\int_{x_k}^{\infty} u\bigg)^{\frac{p}{p-q}}  \sup_{i\leq k}  2^{-i\frac{q}{p-q}}  V_r(x_{i-1},x_i)^{\frac{pq}{p-q}}\\
& \approx \sum_{k=-\infty}^{M-1}  \bigg( \sum_{i=k} ^{M-1}  \int_{x_i}^{x_{i+1}} \bigg(\int_{s}^{\infty} u\bigg)^{\frac{q}{p-q}} u(s) ds \bigg)  \\
	& \quad \times \Big[ \sup_{i\leq k}  2^{-i\frac{q}{p-q}} V_r(x_{i-1},x_i)^{\frac{pq}{p-q}} - \sup_{i\leq k-1}  2^{-i\frac{q}{p-q}} V_r(x_{i-1},x_i)^{\frac{pq}{p-q}}\Big]  \\
	&+  \lim_{k \rightarrow -\infty}  \bigg(\sum_{i=k} ^{M-1}  \int_{x_i}^{x_{i+1}} \bigg(\int_{s}^{\infty} u\bigg)^{\frac{q}{p-q}} u(s) ds\bigg) \sup_{i\leq k}  2^{-i\frac{q}{p-q}} V_r(x_{i-1},x_i)^{\frac{pq}{p-q}}.
\end{align*}
Now, taking
$$
c_k =  \int_{x_k}^{x_{k+1}} \bigg(\int_{s}^{\infty} u\bigg)^{\frac{q}{p-q}} u(s) ds
$$
in \eqref{Abel} and applying \eqref{A3-equiv},  we obtain
\begin{align*}
A_3^{\frac{pq}{p-q}} & \approx \sum_{k=-\infty}^{M-1}  \bigg( \int_{x_k}^{x_{k+1}} \bigg(\int_{s}^{\infty} u\bigg)^{\frac{q}{p-q}} u(s) ds \bigg) \sup_{i\leq k}  2^{-i\frac{q}{p-q}} V_r(x_{i-1},x_i)^{\frac{pq}{p-q}}\\
	& \approx \sum_{k=-\infty}^{M-1}  \bigg( \int_{x_k}^{x_{k+1}} \bigg(\int_{s}^{\infty} u\bigg)^{\frac{q}{p-q}} u(s) ds \bigg) 	\esup_{t \in (0, x_k)} \bigg(\int_0^t w\bigg)^{-\frac{q}{p-q}} V_r(0,t)^{\frac{pq}{p-q}}\\
& \leq \sum_{k=-\infty}^{M-1} \int_{x_k}^{x_{k+1}} \bigg(\int_t^{\infty} u\bigg)^{\frac{q}{p-q}} u(t)   \esup_{s\in (0, t)}  \bigg(\int_0^s w \bigg)^{-\frac{q}{p-q}} V_r(0,s)^{\frac{pq}{p-q}} dt\\
&= C_4^{\frac{pq}{p-q}}.
\end{align*}
On the other hand, the definition of the discretizing sequence $\{x_k\}_{k=-\infty}^{M}$ yields that
\begin{align}
B_3^{\frac{pq}{p-q}} & \leq \sum_{k=-\infty}^{M-1} 2^{-k\frac{q}{p-q}} \bigg( \int_{0}^{x_{k+1}} \bigg( \int_t^{x_{k+1}} u \bigg)^{\frac{q}{1-q}} u(t) V_r(0,t)^{\frac{q}{1-q}} dt \bigg)^{\frac{p(1-q)}{p-q}} \notag\\
& \approx  \sum_{k=-\infty}^{M-2} \int_{x_{k+1}}^{x_{k+2}} \bigg(\int_0^x w \bigg)^{-\frac{p}{p-q}} w(x) dx \bigg( \int_{0}^{x_{k+1}} \bigg( \int_t^{x_{k+1}} u \bigg)^{\frac{q}{1-q}} u(t) V_r(0,t)^{\frac{q}{1-q}} dt \bigg)^{\frac{p(1-q)}{p-q}}  \notag \\
& \quad + 2^{-M\frac{q}{p-q}} \bigg( \int_{0}^{x_{M}} \bigg( \int_t^{x_{M}} u \bigg)^{\frac{q}{1-q}} u(t) V_r(0,t)^{\frac{q}{1-q}} dt \bigg)^{\frac{p(1-q)}{p-q}} \notag \\
& \lesssim C_5^{\frac{pq}{p-q}}. \label{B3<C5}
\end{align}
Consequently, we have $A_3 + B_3 \lesssim C_4 + C_5$, which completes the proof in this case.
\medskip

\rm{(vi)} By Theorem~\ref{C:discrete solutions}(vi), we have that $C \approx A_4 + B_3$. We will prove that $ A_4+B_3  \approx  C_5 + C_6 $.

First of all, it is clear that $A_3 \leq A_4$. Then, using \eqref{C5<D1} combined with \eqref{D1<A3+B3}, we obtain that $C_5 \lesssim   B_3+ A_3 \leq B_3 + A_4$. We shall find an upper estimate for  $C_6$.	Denote by
\begin{equation*}
\Phi(x,y) = \bigg( \int_x^y \bigg(\int_0^s w \bigg)^{-\frac{p}{p-r}} w(s) V_r(0,s)^{\frac{pr}{p-r}} ds \bigg)^{\frac{q(p-r)}{r(p-q)}}.
\end{equation*}
Then, we have
\begin{align*}
C_6^{\frac{pq}{p-q}} & = \sum_{i=-\infty}^{M-1} \int_{x_k}^{x_{k+1}} \bigg(\int_0^x w\bigg)^{-2}w(x)  \sup_{y \in (0, x)} \bigg(\int_0^y w \bigg) \bigg(\int_y^x u(s)\bigg(\int_s^{\infty} u \bigg)^{\frac{q}{p-q}}ds\bigg) \Phi(0,y)  dx \\
& \lesssim  \sum_{k=-\infty}^{M-1} 2^{-k} \sup_{y \in (0, x_{k+1})} \bigg(\int_0^y w \bigg)
\bigg(\int_y^{x_{k+1}} u(t)\bigg(\int_t^{\infty} u \bigg)^{\frac{q}{p-q}} dt\bigg) \Phi(0,y)\\
& = \sum_{k=-\infty}^{M-1} 2^{-k} \sup_{i \leq k}
\sup_{y \in (x_i, x_{i+1})} \bigg(\int_0^y w \bigg)
\bigg(\int_y^{x_{k+1}} u(t) \bigg(\int_t^{\infty} u \bigg)^{\frac{q}{p-q}} dt\bigg) \Phi(0,y)\\
&\approx \sum_{k=-\infty}^{M-1} 2^{-k} \sup_{i \leq k} 2^{i}
\sup_{y \in (x_i, x_{i+1})}
\bigg(\int_y^{x_{k+1}} u(t)\bigg(\int_t^{\infty} u \bigg)^{\frac{q}{p-q}} dt\bigg) \Phi(0,y) \\
&\approx  \sum_{k=-\infty}^{M-1} 2^{-k} \sup_{i \leq k} 2^{i} \sup_{y \in (x_i, x_{i+1})}
\bigg(\int_y^{x_{i+1}} u(t)\bigg(\int_t^{\infty} u \bigg)^{\frac{q}{p-q}} dt\bigg) \Phi(0,y) \\
& \quad +  \sum_{k=-\infty}^{M-1} 2^{-k} \sup_{i \leq k-1} 2^{i}
\bigg(\int_{x_{i+1}}^{x_{k+1}} u(t)\bigg(\int_t^{\infty} u \bigg)^{\frac{q}{p-q}} dt\bigg) \Phi(0,x_{i+1}).
\end{align*}
Since $\Phi(0, y) \approx \Phi(0, x_i)+ \Phi(x_i, x_{i+1})$ for every $y \in (x_i, x_{i+1})$, we have
\begin{align*}
C_6^{\frac{pq}{p-q}} &\lesssim  \sum_{k=-\infty}^{M-1} 2^{-k} \sup_{i \leq k} 2^{i}
\bigg(\int_{x_i}^{x_{i+1}} u(t)\bigg(\int_t^{\infty} u \bigg)^{\frac{q}{p-q}} dt\bigg) \Phi(0,x_i) \\
&\quad +  \sum_{k=-\infty}^{M-1} 2^{-k} \sup_{i \leq k} 2^{i} \sup_{y \in (x_i, x_{i+1})} \bigg(\int_y^{x_{i+1}} u(t)\bigg(\int_t^{\infty} u \bigg)^{\frac{q}{p-q}} dt\bigg) \Phi(x_i,y)\\
& \quad +  \sum_{k=-\infty}^{M-1} 2^{-k} \sup_{i \leq k-1} 2^{i} \bigg(\int_{x_{i+1}}^{x_{k+1}} u(t)\bigg(\int_t^{\infty} u \bigg)^{\frac{q}{p-q}} dt\bigg) \Phi(0,x_{i+1}).
\end{align*}
Then, using the fact that $\{2^i \Phi(0,x_{i+1})\}$ is a strongly increasing sequence and applying \eqref{inc.sup-sum} to the third term, we obtain that
\begin{align*}
C_6^{\frac{pq}{p-q}} & \lesssim  \sum_{k=-\infty}^{M-1} 2^{-k} \sup_{i \leq k} 2^{i}
\bigg(\int_{x_i}^{x_{i+1}} u(t)\bigg(\int_t^{\infty} u \bigg)^{\frac{q}{p-q}} dt\bigg) \Phi(0,x_i)\\
& \quad + \sum_{k=-\infty}^{M-1} 2^{-k} \sup_{i \leq k} 2^{i} \sup_{y \in (x_i, x_{i+1})}
\bigg(\int_y^{x_{i+1}} u(t)\bigg(\int_t^{\infty} u \bigg)^{\frac{q}{p-q}} dt\bigg) \Phi(x_i,y).
\end{align*}
Moreover, \eqref{dec.sum-sup} yields
\begin{align}
C_6^{\frac{pq}{p-q}} &\lesssim  \sum_{k=-\infty}^{M-1} \bigg(\int_{x_k}^{x_{k+1}} u(t)\bigg(\int_t^{\infty} u \bigg)^{\frac{q}{p-q}} dt\bigg) \Phi(0,x_k) \notag \\
&\quad +  \sum_{k=-\infty}^{M-1} \sup_{y \in (x_k, x_{k+1})}
\bigg(\int_y^{x_{k+1}} u(t)\bigg(\int_t^{\infty} u \bigg)^{\frac{q}{p-q}} dt\bigg) \Phi(x_k,y) \notag\\
& = C_{6,1} + C_{6,2}. \label{C6<C61+C62}
\end{align}
Applying \eqref{Abel} with $b_k = \Phi(0,x_k)$ and
\begin{equation}\label{C:6-ck}
c_k = \int_{x_k}^{x_{k+1}} u(t)\bigg(\int_t^{\infty} u \bigg)^{\frac{q}{p-q}} dt,
\end{equation}
we have that
\begin{align*}
C_{6,1} &\approx \sum_{k=-\infty}^{M-1} \bigg(\int_{x_k}^{\infty} u \bigg)^{\frac{p}{p-q}} \big[\Phi(0,x_k) - \Phi(0,x_{k-1}) \big].
\end{align*}
Moreover, taking $s= \frac{q}{p-q}$, $a_k = \int_{x_k}^{x_{k+1}} u$ and $b_k = \Phi(0, x_k)$, \eqref{difference-u} gives
\begin{align}\label{C61-upper}
C_{6,1} &\approx  \sum_{k=-\infty}^{M-1} \bigg(\int_{x_k}^{x_{k+1}} u \bigg)\bigg(\int_{x_k}^{\infty} u \bigg)^{\frac{q}{p-q}} \Phi(0,x_k).
\end{align}
Finally, \eqref{lower-A2} together with \eqref{C61-upper} yield that
\begin{equation}\label{C61<A4}
    C_{6,1} \lesssim A_4^{\frac{pq}{p-q}}.
\end{equation}
Let us proceed with $C_{6,2}$. It is easy to see that
\begin{align*}
C_{6,2}  \lesssim  \sum_{k=-\infty}^{M-1} 2^{-k\frac{q}{p-q}} \esup_{y \in (x_k, x_{k+1})}
\bigg(\int_y^{x_{k+1}} u(t)\bigg(\int_t^{\infty} u \bigg)^{\frac{q}{p-q}} dt\bigg) V_r(0,y)^{\frac{pq}{p-q}}.
\end{align*}
Let $y_k \in [x_k, x_{k+1}]$, $k \leq M-1$ be such that
\begin{align}\label{sup-yk}
&\esup_{y \in (x_k, x_{k+1})}
\bigg(\int_y^{x_{k+1}} u(t)\bigg(\int_t^{\infty} u \bigg)^{\frac{q}{p-q}} dt\bigg)  V_r(0,y)^{\frac{pq}{p-q}} \notag \\
& \hspace{3cm}\lesssim
\bigg(\int_{y_k}^{x_{k+1}} u(t)\bigg(\int_t^{\infty} u \bigg)^{\frac{q}{p-q}} dt\bigg) V_r(0,y_k)^{\frac{pq}{p-q}}.
\end{align}
Observe that
$$
2^k \approx \int_0^{x_k} w \leq \int_0^{y_k} w \leq \int_0^{x_{k+1}} w \approx 2^{k+1} \quad\text{for $k \leq M-1$.}
$$
Thus, $\{y_k\}_{k=-\infty}^{M-1}$ is a discretizing sequence of $W$. Then, using \eqref{sup-yk}, we get that
\begin{align*}
C_{6,2} & \lesssim  \sum_{k=-\infty}^{M-1} 2^{-k\frac{q}{p-q}} \bigg(\int_{y_k}^{x_{k+1}} u(t)\bigg(\int_t^{\infty} u \bigg)^{\frac{q}{p-q}} dt\bigg) V_r(0,y_k)^{\frac{pq}{p-q}}  \\
& \leq \sum_{k=-\infty}^{M-1} 2^{-k\frac{q}{p-q}} \bigg(\int_{y_k}^{y_{k+1}} u(t)\bigg(\int_t^{\infty} u \bigg)^{\frac{q}{p-q}} dt\bigg) V_r(0,y_k)^{\frac{pq}{p-q}} \\
& \leq \sum_{k=-\infty}^{M-1}  \bigg(\int_{y_k}^{y_{k+1}} u(t)\bigg(\int_t^{\infty} u \bigg)^{\frac{q}{p-q}} dt\bigg) \bigg(\sum_{i=-\infty}^k 2^{-i\frac{r}{p-r}} V_r(0,y_i)^{\frac{pr}{p-r}}\bigg)^{\frac{q(p-r)}{r(p-q)}}.
\end{align*}
Applying \eqref{Abel} with
$$
c_k = \int_{y_k}^{y_{k+1}} u(t) \bigg(\int_t^\infty u\bigg)^{\frac{q}{p-q}} dt
$$
and
$$
b_k = \bigg(\sum_{i=-\infty}^k 2^{-i\frac{r}{p-r}} V_r(0,y_i)^{\frac{pr}{p-r}}\bigg)^{\frac{q(p-r)}{r(p-q)}},
$$
and then using \eqref{difference-u} with $a_k = \int_{y_k}^{y_{k+1}} u$, we get
\begin{align*}
C_{6,2} & \lesssim  \sum_{k=-\infty}^{M-1} \bigg(\int_{y_k}^{\infty} u \bigg)^{\frac{p}{p-q}}  \bigg[\bigg(\sum_{i=-\infty}^k 2^{-i\frac{r}{p-r}} V_r(0,y_i)^{\frac{pr}{p-r}}\bigg)^{\frac{q(p-r)}{r(p-q)}} - \\
& \hspace{5cm} - \bigg(\sum_{i=-\infty}^{k-1} 2^{-i\frac{r}{p-r}} V_r(0,y_i)^{\frac{pr}{p-r}}\bigg)^{\frac{q(p-r)}{r(p-q)}} \bigg]   \\
& \approx  \sum_{k=-\infty}^{M-1} \bigg(\int_{y_k}^{y_{k+1}} u \bigg)\bigg(\int_{y_k}^{\infty} u \bigg)^{\frac{q}{p-q}}  \bigg(\sum_{i=-\infty}^k 2^{-i\frac{r}{p-r}} V_r(0,y_i)^{\frac{pr}{p-r}}\bigg)^{\frac{q(p-r)}{r(p-q)}}.
\end{align*}
Since $\{y_k\}_{k=-\infty}^{M-1}$ is a discretizing sequence of $W$, we obtain from \eqref{A2-equiv}
\begin{equation}\label{C62<A4}
    C_{6,2} \lesssim A_4^{\frac{pq}{p-q}}.
\end{equation}
Therefore, in view of \eqref{C6<C61+C62}, \eqref{C61<A4} and \eqref{C62<A4}, we arrive at $C_6 \lesssim A_4$.
Consequently, we obtain that $ C_5 + C_6  \lesssim B_3 + A_4$.
It remains to show that $B_3 + A_4 \lesssim C_5 + C_6$. We have proved in \eqref{B3<C5} that $B_3 \lesssim C_5$. Taking $s=\frac{q}{p-q}$, $a_k = \int_{x_k}^{x_{k+1}} u$ and
$$
b_k = \bigg(\sum_{i=-\infty}^k 2^{-i\frac{r}{p-r}} V_r(0,x_i)^{\frac{pr}{p-r}}\bigg)^{\frac{q(p-r)}{r(p-q)}}
$$
in \eqref{difference-u} and additionally, using \eqref{upper-A2}, we have
\begin{align*}
    A_4^{\frac{pq}{p-q}} & \approx \sum_{k=-\infty}^{M-1} \bigg(\int_{x_k}^{\infty} u \bigg)^{\frac{p}{p-q}}  \bigg[\bigg(\sum_{i=-\infty}^k 2^{-i\frac{r}{p-r}} V_r(0,x_i)^{\frac{pr}{p-r}}\bigg)^{\frac{q(p-r)}{r(p-q)}} -
        \\
    & \hspace{5cm} - \bigg(\sum_{i=-\infty}^{k-1} 2^{-i\frac{r}{p-r}} V_r(0,x_i)^{\frac{pr}{p-r}}\bigg)^{\frac{q(p-r)}{r(p-q)}} \bigg]
        \\
    &\approx
    \sum_{k=-\infty}^{M-2} \bigg(\int_{x_{k+1}}^{\infty} u \bigg)^{\frac{p}{p-q}}  \bigg[\bigg(\sum_{i=-\infty}^k 2^{-i\frac{r}{p-r}} V_r(0,x_i)^{\frac{pr}{p-r}}\bigg)^{\frac{q(p-r)}{r(p-q)}} -
        \\
    & \hspace{5cm} - \bigg(\sum_{i=-\infty}^{k-1} 2^{-i\frac{r}{p-r}} V_r(0,x_i)^{\frac{pr}{p-r}}\bigg)^{\frac{q(p-r)}{r(p-q)}} \bigg]
        \\
    &\hspace{1cm} + \sum_{k=-\infty}^{M-1} \bigg(\int_{x_{k}}^{x_{k+1}} u \bigg)^{\frac{p}{p-q}}  \bigg[\bigg(\sum_{i=-\infty}^k 2^{-i\frac{r}{p-r}} V_r(0,x_i)^{\frac{pr}{p-r}}\bigg)^{\frac{q(p-r)}{r(p-q)}} -
        \\
    & \hspace{5cm} - \bigg(\sum_{i=-\infty}^{k-1} 2^{-i\frac{r}{p-r}} V_r(0,x_i)^{\frac{pr}{p-r}}\bigg)^{\frac{q(p-r)}{r(p-q)}} \bigg]
        \\
    &\lesssim
    \sum_{k=-\infty}^{M-2} \bigg(\int_{x_{k+1}}^{x_{k+2}} u(s) \bigg(\int_{s}^{\infty} u \bigg)^{\frac{q}{p-q}}dt\bigg) \bigg(\sum_{i=-\infty}^k 2^{-i\frac{r}{p-r}} V_r(0,x_i)^{\frac{pr}{p-r}}\bigg)^{\frac{q(p-r)}{r(p-q)}}
        \\
    &\hspace{1cm} + \sum_{k=-\infty}^{M-1} \bigg(\int_{x_{k}}^{x_{k+1}} u \bigg)^{\frac{p}{p-q}}  \bigg(\sum_{i=-\infty}^{k-1} 2^{-i\frac{r}{p-r}} V_r(0,x_i)^{\frac{pr}{p-r}}\bigg)^{\frac{q(p-r)}{r(p-q)}}
        \\
    &\hspace{1cm} + \sum_{k=-\infty}^{M-1} 2^{-k\frac{q}{p-q}}\bigg(\int_{x_{k}}^{x_{k+1}} u \bigg)^{\frac{p}{p-q}}   V_r(0,x_k)^{\frac{pq}{p-q}}
        \\
    &\lesssim
    \sum_{k=-\infty}^{M-1} \bigg(\int_{x_{k}}^{x_{k+1}} u(s) \bigg(\int_{s}^{\infty} u \bigg)^{\frac{q}{p-q}}dt\bigg)  \bigg(\sum_{i=-\infty}^{k-1} 2^{-i\frac{r}{p-r}} V_r(0,x_i)^{\frac{pr}{p-r}}\bigg)^{\frac{q(p-r)}{r(p-q)}}
        \\
    &\hspace{1cm} + \sum_{k=-\infty}^{M-1} 2^{-k\frac{q}{p-q}}\bigg(\int_{x_{k}}^{x_{k+1}} u \bigg)^{\frac{p}{p-q}}   V_r(0,x_k)^{\frac{pq}{p-q}}
        \\
    & \lesssim \sum_{k=-\infty}^{M-1} \bigg(\int_{x_{k}}^{x_{k+1}} u(s) \bigg(\int_{s}^{\infty} u \bigg)^{\frac{q}{p-q}}dt\bigg)  \bigg( \int_{0}^{x_{k}} \bigg(\int_0^t w \bigg)^{-\frac{p}{p-r}}  w(t) V_r(0, t)^{\frac{pr}{p-r}}\bigg)^{\frac{q(p-r)}{r(p-q)}}
        \\
    &\hspace{1cm} + \sum_{k=-\infty}^{M-1} 2^{-k\frac{q}{p-q}}\bigg(\int_{x_{k}}^{x_{k+1}} u \bigg)^{\frac{p}{p-q}}   V_r(0,x_k)^{\frac{pq}{p-q}}
        \\
    &\lesssim
     \sum_{k=-\infty}^{M-2} \bigg(\int_{x_k}^{x_{k+1}} u(s) \bigg(\int_{s}^{\infty} u \bigg)^{\frac{q}{p-q}}
    ds\bigg) \bigg( \int_{0}^{x_{k}} \bigg(\int_0^t w \bigg)^{-\frac{p}{p-r}}  w(t) V_r(0, t)^{\frac{pr}{p-r}}\bigg)^{\frac{q(p-r)}{r(p-q)}}
        \\
    &\quad  +\sum_{k=-\infty}^{M-2}  2^{-k\frac{q}{p-q}}\bigg(\int_{x_k}^{x_{k+1}} u \bigg)^{\frac{p}{p-q}}
      V_r(0,x_k)^{\frac{pq}{p-q}}
        \\
    &\quad  + \bigg(\int_0^\infty w\bigg)^{-\frac{q}{p-q}}\bigg(\int_{x_{M-1}}^{\infty} u(s) \bigg(\int_{s}^{\infty} u \bigg)^{\frac{q}{p-q}}
    ds \bigg)  V_r(0,x_{M-1})^{\frac{pq}{p-q}},
\end{align*}
that is,
\begin{equation}\label{A4<J1+J2+J3}
    A_4 \lesssim J_1 + J_2 + J_3.
\end{equation}
Observe that
\begin{align}\label{J1<C6}
J_1 & \approx  \sum_{k=-\infty}^{M-2} \int_{x_{k+1}}^{x_{k+2}} \bigg(\int_0^x w \bigg)^{-2} w(x) dx \notag\\
    & \hspace{3cm} \times \bigg(\int_0^{x_k} w\bigg)  \bigg(\int_{x_k}^{x_{k+1}} u(s) \bigg(\int_{s}^{\infty} u \bigg)^{\frac{q}{p-q}}
ds\bigg) \Phi(0,x_k)\notag\\
& \leq \sum_{k=-\infty}^{M-2} \int_{x_{k+1}}^{x_{k+2}} \bigg(\int_0^x w \bigg)^{-2} w(x) dx \notag\\
    & \hspace{3cm} \times \sup_{y \in (0, x_{k+1})} \bigg(\int_0^{y} w\bigg)  \bigg(\int_{y}^{x_{k+1}} u(s) \bigg(\int_{s}^{\infty} u \bigg)^{\frac{q}{p-q}}
ds\bigg) \Phi(0,y) \notag\\
& \leq C_6^{\frac{pq}{p-q}}.
\end{align}
Next, it is easy to see that
\begin{align}\label{J2<C5}
J_2 & \approx \sum_{k=-\infty}^{M-2}  2^{-k\frac{q}{p-q}}\bigg(\int_{x_k}^{x_{k+1}} u(s) \bigg(\int_{s}^{x_{k+1}} u \bigg)^{\frac{q}{1-q}}
ds \bigg)^{\frac{p(1-q)}{p-q}}  V_r(0,x_k)^{\frac{pq}{p-q}}  \notag\\
&\lesssim  \sum_{k=-\infty}^{M-2} \int_{x_{k+1}}^{x_{k+2}} \bigg(\int_0^x w\bigg)^{-\frac{p}{p-q}} w(x) dx
 \bigg(\int_{x_k}^{x_{k+1}} u(s) \bigg(\int_{s}^{x_{k+1}} u \bigg)^{\frac{q}{1-q}} V_r(0, s)^{\frac{q}{1-q}}
ds \bigg)^{\frac{p(1-q)}{p-q}}\notag  \\
& \leq C_5^{\frac{pq}{p-q}}.
\end{align}
And, finally,
\begin{align}\label{J3<C5}
J_3 & \approx \bigg(\int_0^{\infty} w \bigg)^{-\frac{q}{p-q}}  \bigg(\int_{x_{M-1}}^{\infty} u(s) \bigg(\int_{s}^{\infty} u \bigg)^{\frac{q}{1-q}}
ds \bigg)^{\frac{p(1-q)}{p-q}} V_r(0, x_{M-1})^{\frac{pq}{p-q}}  \leq C_5^{\frac{pq}{p-q}}.
\end{align}
Altogether, we arrive at $B_3 + A_4 \lesssim C_5 + C_6$, establishing the result in this case.
\medskip

\rm{(vii)} By Theorem~\ref{C:discrete solutions}(vii), we have that $C \approx A_4 + B_4$. We will prove that $ A_4+B_4  \approx C_6 + C_7$.

Let us start with the estimate $C_6 + C_7 \lesssim A_4 + B_4$. In the proof of the case (vi) we have shown that $C_6 \lesssim A_4$. On the other hand,
\begin{align*}
C_7 & \approx  \bigg(\sum_{k=-\infty}^{M-1} \int_{x_k}^{x_{k+1}} \bigg(\int_0^x w\bigg)^{-\frac{p}{p-q}} w(x) \esup_{t \in (0,x)} \bigg( \int_t^{\infty} u \bigg)^{\frac{p}{p-q}} V_r(0,t) ^{\frac{pq}{p-q}} dx \bigg)^{\frac{p-q}{pq}}\\
& \hspace{1cm}+ \bigg(\int_0^{x_M}  w\bigg)^{-\frac{1}{p}} \esup_{t \in (0, x_M)} \bigg( \int_t^{x_M} u \bigg)^{\frac{1}{q}} V_r(0, t)\\
& \lesssim \bigg(\sum_{k=-\infty}^{M-1} 2^{-k\frac{q}{p-q}}\esup_{t \in (0,x_{k+1})} \bigg( \int_t^{\infty} u \bigg)^{\frac{p}{p-q}} V_r(0,t) ^{\frac{pq}{p-q}} \bigg)^{\frac{p-q}{pq}}\\
& = \bigg(\sum_{k=-\infty}^{M-1} 2^{-k\frac{q}{p-q}} \sup_{i \leq k} \esup_{t \in (x_i,x_{i+1})} \bigg( \int_t^{\infty} u \bigg)^{\frac{p}{p-q}} V_r(0,t) ^{\frac{pq}{p-q}} \bigg)^{\frac{p-q}{pq}} .
\end{align*}
Using \eqref{dec.sum-sup} and applying \eqref{V-cut}, we have
\begin{align*}
C_7^{\frac{pq}{p-q}}
& \lesssim \sum_{k=-\infty}^{M-1} 2^{-k\frac{q}{p-q}}\esup_{t \in (x_k,x_{k+1})} \bigg( \int_t^{\infty} u \bigg)^{\frac{p}{p-q}} V_r(0,t) ^{\frac{pq}{p-q}}\\
&\approx \sum_{k=-\infty}^{M-1} 2^{-k\frac{q}{p-q}} \bigg( \int_{x_k}^{\infty} u \bigg)^{\frac{p}{p-q}} V_r(0,x_k) ^{\frac{pq}{p-q}} + \sum_{k=-\infty}^{M-2} 2^{-k\frac{q}{p-q}} \bigg( \int_{x_{k+1}}^{\infty} u \bigg)^{\frac{p}{p-q}} V_r(x_k,x_{k+1}) ^{\frac{pq}{p-q}} \\
& \qquad + \sum_{k=-\infty}^{M-1} 2^{-k\frac{q}{p-q}}\esup_{t \in (x_k,x_{k+1})} \bigg( \int_t^{x_{k+1}} u \bigg)^{\frac{p}{p-q}} V_r(x_k,t) ^{\frac{pq}{p-q}}.
\end{align*}
Note that, since $\{2^{-\frac{k}{p}} \big( \int_{x_k}^{\infty} u \big)^{\frac{1}{q}}\}$ is strongly decreasing and in view of \eqref{Vr(0,xk)},
\begin{equation}\label{C8-upper}
\sum_{k=-\infty}^{M-1} 2^{-k\frac{q}{p-q}} \bigg( \int_{x_k}^{\infty} u \bigg)^{\frac{p}{p-q}} V_r(0,x_k) ^{\frac{pq}{p-q}} \approx \sum_{k=-\infty}^{M-1} 2^{-k\frac{q}{p-q}} \bigg( \int_{x_k}^{\infty} u \bigg)^{\frac{p}{p-q}} V_r(x_{k-1},x_k)^{\frac{pq}{p-q}}
\end{equation}
holds if we apply \eqref{dec.sup-sum} when $r < 1$ and  \eqref{dec.sup-sup} when $r=1$.
Then, using \eqref{C8-upper}, we obtain that
\begin{align}
C_7^{\frac{pq}{p-q}} & \lesssim  \sum_{k=-\infty}^{M-1} 2^{-k\frac{q}{p-q}} \bigg( \int_{x_k}^{\infty} u \bigg)^{\frac{p}{p-q}} V_r(x_{k-1},x_k) ^{\frac{pq}{p-q}} \notag\\
&\quad  + \sum_{k=-\infty}^{M-1} 2^{-k\frac{q}{p-q}}\esup_{t \in (x_k,x_{k+1})} \bigg( \int_t^{x_{k+1}} u \bigg)^{\frac{p}{p-q}} V_r(x_k,t) ^{\frac{pq}{p-q}}\notag\\
& = \sum_{k=-\infty}^{M-1} 2^{-k\frac{q}{p-q}} \bigg( \int_{x_k}^{\infty} u \bigg)^{\frac{p}{p-q}} V_r(x_{k-1},x_k) ^{\frac{pq}{p-q}}  + B_4^{\frac{pq}{p-q}}\notag\\
&= D_2 + B_4^{\frac{pq}{p-q}}. \label{D2}
\end{align}
Using the fact that
\begin{equation*}
    \bigg(\int_{x_k}^{\infty} u\bigg)^{\frac{p}{p-q}} \approx \sum_{i= k}^{M-1} \int_{x_i}^{x_{i+1}} u(s) \bigg(\int_{s}^{\infty} u\bigg)^{\frac{q}{p-q}} ds,
\end{equation*}
we get
\begin{align*}
D_2 \approx  \sum_{k=-\infty}^{M-1} 2^{-k\frac{q}{p-q}}  V_r(x_{k-1},x_k)^{\frac{pq}{p-q}} \sum_{i= k}^{M-1} \int_{x_i}^{x_{i+1}} u(s) \bigg(\int_{s}^{\infty} u\bigg)^{\frac{q}{p-q}} ds.
\end{align*}
Interchanging the order of sums gives
\begin{align*}
D_2 \approx \sum_{k=-\infty}^{M-1} \bigg(\int_{x_k}^{x_{k+1}} u(s) \bigg(\int_{s}^{\infty} u\bigg)^{\frac{q}{p-q}} ds\bigg) \sum_{i= -\infty}^k   2^{-i\frac{q}{p-q}}  V_r(x_{i-1},x_i)^{\frac{pq}{p-q}}.
\end{align*}
Since $\frac{q(p-r)}{r(p-q)}>1$, we have
\begin{align*}
\sum_{i= -\infty}^k  2^{-i\frac{q}{p-q}} V_r(x_{i-1}, x_i)^{\frac{pq}{p-q}} \leq \bigg( \sum_{i=-\infty}^{k}  2^{-i\frac{r}{p-r}} V_r(x_{i-1},x_i)^{\frac{pr}{p-r}} \bigg)^{\frac{q(p-r)}{r(p-q)}}.
\end{align*}
Then, first using  \eqref{Abel}  with $c_k$ as in \eqref{C:6-ck} and
$$
b_k = \bigg(\sum_{i=-\infty}^k 2^{-i\frac{r}{p-r}} V_r(x_{i-1}, x_i)^{\frac{pr}{p-r}}\bigg)^{\frac{q(p-r)}{r(p-q)}},
$$
then applying \eqref{difference-u} with $a_k = \int_{x_k}^{x_{k+1}} u$ and $b_k$ as above, we obtain \begin{align}
D_2 &\lesssim \sum_{k=-\infty}^{M-1} \bigg(\int_{x_k}^{x_{k+1}} u(s) \bigg(\int_{s}^{\infty} u\bigg)^{\frac{q}{p-q}} ds\bigg) \bigg( \sum_{i=-\infty}^{k}  2^{-i\frac{r}{p-r}} V_r(x_{i-1},x_i)^{\frac{pr}{p-r}} \bigg)^{\frac{q(p-r)}{r(p-q)}} \notag \\
& \approx \sum_{k=-\infty}^{M-1} \bigg(\int_{x_k}^{\infty} u \bigg)^{\frac{p}{p-q}}  \bigg[\bigg(\sum_{i=-\infty}^k 2^{-i\frac{r}{p-r}} V_r(x_{i-1},x_i)^{\frac{pr}{p-r}}\bigg)^{\frac{q(p-r)}{r(p-q)}} - \notag \\
& \hspace{5cm} - \bigg(\sum_{i=-\infty}^{k-1} 2^{-i\frac{r}{p-r}} V_r(x_{i-1},x_i)^{\frac{pr}{p-r}}\bigg)^{\frac{q(p-r)}{r(p-q)}} \bigg] \notag\\
& \approx A_4^{\frac{pq}{p-q}}.\label{D2<A4}
\end{align}
Thus, using \eqref{D2}, we arrive at $C_7 \lesssim  B_4 + A_4$. It remains to show that $B_4 + A_4 \lesssim C_6 + C_7$. It is clear that
\begin{align}\label{J2+J3<C7}
J_2^{\frac{p-q}{pq}}+J_3^{\frac{p-q}{pq}}&\lesssim  \sum_{k=-\infty}^{M-1} \bigg(\int_{x_k}^{x_{k+1}} w(s)\bigg(\int_0^{s} w \bigg)^{-\frac{p}{p-q}}ds \bigg)  \esup_{t\in(0,x_{k})}\bigg(\int_{t}^{\infty} u \bigg)^{\frac{p}{p-q}}
  V_r(0,t)^{\frac{pq}{p-q}}  \notag\\
	&\lesssim C_7.
	\end{align}
We have from \eqref{A4<J1+J2+J3} together with \eqref{J1<C6} and \eqref{J2+J3<C7} that $A_4 \lesssim C_6 + C_7$. Furthermore,
\begin{align*}
B_4^{\frac{pq}{p-q}} & \leq \sum_{k=-\infty}^{M-1} 2^{-k\frac{q}{p-q}}\esup_{t \in (x_k,x_{k+1})} \bigg( \int_t^{\infty} u \bigg)^{\frac{p}{p-q}} V_r(0,t) ^{\frac{pq}{p-q}} \\
& \approx \sum_{k=-\infty}^{M-2} \int_{x_{k+1}}^{x_{k+2}} \bigg(\int_0^x w\bigg)^{-\frac{p}{p-q}} w(x) dx \esup_{t \in (x_k,x_{k+1})} \bigg( \int_t^{\infty} u \bigg)^{\frac{p}{p-q}} V_r(0,t) ^{\frac{pq}{p-q}} \\
& +  \bigg(\int_0^{\infty} w\bigg)^{-\frac{q}{p-q}} \esup_{t \in (x_{M-1}, \infty)} \bigg( \int_t^{\infty} u \bigg)^{\frac{p}{p-q}} V_r(0,t)^{\frac{pq}{p-q}}\\
& \leq C_7^{\frac{pq}{p-q}}.
\end{align*}
Consequently, we arrive at $A_4 + B_4 \lesssim C_6 + C_7 \lesssim A_4 + B_4$, which completes the proof.
\end{proof}

\begin{proof}[Proof of Theorem \ref{T:equiv.solut.}] By Theorem~\ref{C:discrete solutions}(vi), we have $C \approx A_4 + B_3$. We will prove that $ A_4+B_3  \approx  \mathcal{C}_5 + \mathcal{C}_6$. We know from \eqref{B3<C5} that $B_3 \lesssim C_5$. Also, it is clear that $C_5 \lesssim \mathcal{C}_5$. Then $B_3 \lesssim \mathcal{C}_5$. On the other hand, it is easy to see that $J_1 \leq  \mathcal{C}_6^{\frac{pq}{p-q}}$. Hence, the combination of   \eqref{A4<J1+J2+J3}, \eqref{J2<C5} and \eqref{J3<C5} leads to $A_4 \lesssim \mathcal{C}_5 + \mathcal{C}_6$. Thus, we get $B_3 + A_4 \lesssim \mathcal{C}_5 + \mathcal{C}_6$.

Now, we will prove that $\mathcal{C}_5 + \mathcal{C}_6 \lesssim B_3 + A_4$. One has
\begin{align}
\mathcal{C}_5^{\frac{pq}{p-q}} & \approx \sum_{k=-\infty}^{M-1} \int_{x_k}^{x_{k+1}} \bigg(\int_0^x w\bigg)^{-\frac{p}{p-q}} w(x) \bigg( \int_0^{x} \bigg( \int_t^{\infty} u \bigg)^{\frac{q}{1-q}} u(t) V_r(0,t)^{\frac{q}{1-q}} dt \bigg)^{\frac{p(1-q)}{p-q}} dx  \notag\\
& \hspace{1cm} +  \bigg(\int_0^{x_M}  w\bigg)^{-\frac{q}{q-p}} \bigg( \int_0^{x_M} \bigg( \int_t^{x_M} u \bigg)^{\frac{q}{1-q}} u(t) V_r(0,t)^{\frac{q}{1-q}} dt \bigg)^{\frac{p(1-q)}{p-q}} \notag\\
& \lesssim \sum_{k=-\infty}^{M-1} 2^{-k\frac{q}{p-q}} \bigg( \int_0^{x_{k+1}} \bigg( \int_t^{\infty} u \bigg)^{\frac{q}{1-q}} u(t) V_r(0,t)^{\frac{q}{1-q}} dt \bigg)^{\frac{p(1-q)}{p-q}} \notag\\
& \hspace{1cm} +  \bigg(\int_0^{x_M}  w\bigg)^{-\frac{q}{q-p}} \bigg( \int_0^{x_M} \bigg( \int_t^{x_M} u \bigg)^{\frac{q}{1-q}} u(t) V_r(0,t)^{\frac{q}{1-q}} dt \bigg)^{\frac{p(1-q)}{p-q}} \notag\\
& \lesssim \sum_{k=-\infty}^{M-1} 2^{-k\frac{q}{p-q}} \bigg( \int_0^{x_{k+1}} \bigg( \int_t^{\infty} u \bigg)^{\frac{q}{1-q}} u(t) V_r(0,t)^{\frac{q}{1-q}} dt \bigg)^{\frac{p(1-q)}{p-q}} = D. \label{D}
\end{align}
Since $\max\{A_4, B_3\}<\infty$ and $A_3 \leq A_4$, observe that \eqref{lim-0+} and \eqref{lim-infty} hold in this case as well. Therefore, integration by parts and \eqref{C8-upper} give
\begin{align*}
D  &  \lesssim \sum_{k=-\infty}^{M-1} 2^{-k\frac{q}{p-q}} \bigg( \int_0^{x_{k+1}} \bigg( \int_t^{\infty} u \bigg)^{\frac{1}{1-q}} d\big[V_r(0,t)^{\frac{q}{1-q}}\big] dt \bigg)^{\frac{p(1-q)}{p-q}}  \\
&  \approx \sum_{k=-\infty}^{M-1} 2^{-k\frac{q}{p-q}} \bigg( \int_{0}^{x_{k+1}} \bigg( \int_t^{x_{k+1}} u \bigg)^{\frac{1}{1-q}} d\big[V_r(0,t)^{\frac{q}{1-q}}\big] dt \bigg)^{\frac{p(1-q)}{p-q}}  \\
& \quad + \sum_{k=-\infty}^{M-1} 2^{-k\frac{q}{p-q}} \bigg( \int_{x_k}^{\infty} u \bigg)^{\frac{p}{p-q}} V_r(0,x_k)^{\frac{pq}{p-q}}\\
&\approx   D_1 + D_2,
\end{align*}
where $D_1$ and $D_2$ are defined in \eqref{C5<D1} and \eqref{D2}, respectively.
Therefore, using the fact that $A_3 \leq A_4$ together with \eqref{D1<A3+B3}, we have $D_1 \lesssim A_4^{\frac{p-q}{pq}} + B_3^{\frac{p-q}{pq}}$. Moreover, since $\frac{q(p-r)}{r(p-q)}\geq1$ holds in this case we also have by  \eqref{D2<A4} that $D_2 \lesssim A_4^{\frac{p-q}{pq}}$. Thus, in view of \eqref{D}, we have
\begin{equation*}
    D \lesssim B_3^{\frac{pq}{p-q}} +A_4^{\frac{pq}{p-q}},
\end{equation*}
which yields that $\mathcal{C}_5  \lesssim B_3 +A_4$. Similarly,
\begin{align*}
\mathcal{C}_6^{\frac{pq}{p-q}} &= \sum_{k=\-\infty}^{M-1}\int_{x_k}^{x_{k+1}} \bigg(\int_t^{\infty} u\bigg)^{\frac{q}{p-q}} u(t)  \bigg(\int_0^t \bigg(\int_0^s w \bigg)^{-\frac{p}{p-r}} w(s) V_r(0, s)^{\frac{pr}{p-r}} ds \bigg)^{\frac{q(p-r)}{r(p-q)}} dt \\
&\approx \sum_{k=\-\infty}^{M-1}\int_{x_k}^{x_{k+1}} \bigg(\int_t^{\infty} u\bigg)^{\frac{q}{p-q}} u(t)  \bigg(\int_{x_k}^t \bigg(\int_0^s w \bigg)^{-\frac{p}{p-r}} w(s) V_r(0, s)^{\frac{pr}{p-r}} ds \bigg)^{\frac{q(p-r)}{r(p-q)}} dt \\
&\quad + \sum_{k=\-\infty}^{M-1}\int_{x_k}^{x_{k+1}} \bigg(\int_t^{\infty} u\bigg)^{\frac{q}{p-q}} u(t)  dt
\bigg(\int_0^{x_k} \bigg(\int_0^s w \bigg)^{-\frac{p}{p-r}} w(s) V_r(0, s)^{\frac{pr}{p-r}} ds \bigg)^{\frac{q(p-r)}{r(p-q)}}
\\
&\lesssim \sum_{k=\-\infty}^{M-1}\int_{x_k}^{x_{k+1}} \bigg(\int_t^{\infty} u\bigg)^{\frac{q}{p-q}} u(t)  V_r(0, t)^{\frac{pq}{p-q}}dt  \bigg(\int_{x_k}^{x_{k+1}} \bigg(\int_0^s w \bigg)^{-\frac{p}{p-r}} w(s)ds \bigg)^{\frac{q(p-r)}{r(p-q)}} \\
&+ \sum_{k=\-\infty}^{M-1}\int_{x_k}^{x_{k+1}} \bigg(\int_t^{\infty} u\bigg)^{\frac{q}{p-q}} u(t)  dt
\bigg(\int_0^{x_k} \bigg(\int_0^s w \bigg)^{-\frac{p}{p-r}} w(s) V_r(0, s)^{\frac{pr}{p-r}} ds \bigg)^{\frac{q(p-r)}{r(p-q)}}
\\
& \lesssim \sum_{k=\-\infty}^{M-1}\int_{x_k}^{x_{k+1}} \bigg(\int_t^{\infty} u\bigg)^{\frac{q}{p-q}} u(t)  V_r(0, t)^{\frac{pq}{p-q}} \bigg(\int_{0}^{t} w(s)ds \bigg)^{-\frac{q}{p-q}}dt   \\
&+ \sum_{k=\-\infty}^{M-1}\int_{x_k}^{x_{k+1}} \bigg(\int_t^{\infty} u\bigg)^{\frac{q}{p-q}} u(t)  dt
\bigg(\int_0^{x_k} \bigg(\int_0^s w \bigg)^{-\frac{p}{p-r}} w(s) V_r(0, s)^{\frac{pr}{p-r}} ds \bigg)^{\frac{q(p-r)}{r(p-q)}}
\\
& \lesssim C_4^{\frac{pq}{p-q}} + A_4^{\frac{pq}{p-q}}.
\end{align*}
In the last inequality we applied \eqref{C61<A4} to the second term. Then, using \eqref{C4<A3+B3} and the fact that $A_3\leq A_4$, we arrive at  $\mathcal{C}_6 \lesssim A_3 + B_3 + A_4 \leq A_4 + B_3$, and the proof is complete.
\end{proof}

\noindent\textbf{Acknowledgment.} The authors gladly acknowledge stimulating discussions with Mar\'{\i}a Carro which proved very useful especially in the initial stages of the work on this project.

\bibliographystyle{abbrv}

\end{document}